\def\a{\alpha}
\def\pa{\partial}
\def\de{\delta}
\def\De{\Delta}
\def\g{\gamma}
\def\G{\Gamma}
\def\phi{\varphi}
\def\Om{\Omega}
\def\si{\sigma}
\def\Si{\Sigma}
\def\R{{\mathbb R}}
\def\bS{{\bf S}}
\def\e{\varepsilon}
\def\wC{\widetilde C}
\def\wJ{\widetilde J}
\def\wU{\widetilde U}
\newtheorem{Th}{Theorem}[section]
\newtheorem{Lem}{Lemma}[section]
\newtheorem{Prop}{Proposition}[section]
\newtheorem{Def}{Definition}[section]
\newtheorem{Ex}{Example}[section]
\newtheorem{Pb}{Problem}[section]
\newtheorem{Rem}{Remark}
\numberwithin{equation}{section}
\begin{document}

\title{The global study of Riemannian-Finsler geometry \\
\small{\it to the memory of Marcel Berger}}
\pagestyle{plain}
\author{Katsuhiro Shiohama}
\address {Institute of Information Science  \\
Fukuoka Institute of Technology  \\ 
Fukuoka, 811--0295, JAPAN}
\author{Bankteshwar Tiwari}
\address {Centre for Interdisciplinary Mathematical Sciences, Institute of Science   \\
Banaras Hindu University  \\
Varanasi 221 005, INDIA}
\thanks{First named author's work was supported by JSPS KAKENHI Grant Number15K04864}
\maketitle
\begin{abstract}
	The aim of this article is to present a comparative review of Riemannian and Finsler geometry. The structures of cut and	conjugate loci on Riemannian manifolds have been discussed by many	geometers including H. Busemann, M. Berger and W. Klingenberg. The key point in the study of Finsler manifolds is the non-symmetric property of its distance functions. We discuss fundamental results on the cut and conjugate loci of Finsler manifolds and note the differences  between Riemannian and Finsler manifolds in these respects. The topological and differential structures on Riemannian manifolds, in the presence of convex functions, has been an active field of research in the second half of $20^{\text{th}}$ century. We discuss some results on Riemannian manifolds with convex functions and their recently proved analogues in the field of Finsler manifolds.\\
	\indent	The final version of this article will appear in the book \textit{Geometry in History} (ed. S. G. Dani and A. Papadopoulos), Springer Verlag, 2019.
\end{abstract}
\bigskip

AMS classification: 53C60, 53C22, 53C70, 51H25.

Keywords: Injectivity radius, cut locus, Rauch conjecture, Berger--Omori Lemma, Whitehead convexity, Busemann function, Klingenberg lemma, Busemann-type geometry.

\tableofcontents
\section{Introduction.}
The origin of Finsler geometry can be traced back to Riemann's\index{Riemann, Bernhard, (1826--1866)} 1854 Habilitation address ``Uber die Hypothesen, welche der Geometrie zu grunde liegen" (On the Hypotheses which lie at the Foundations of Geometry), where he remarked: {\it `...The next case in simplicity includes those manifoldness in which the line-element may be expressed as the fourth root of a quartic differential expression. The investigation of this more general kind would require no really different principles, but would take considerable time and throw little new light on the theory of space, especially as the results cannot be geometrically expressed, I restrict myself, therefore, to those manifoldness in which the line-element is expressed as the square root of a quadratic differential expression...'}, translation by William Kingdon Clifford \cite{Riemann}. Later on, the geometry where the metric is the square root of a quadratic differential form,  got well recognized as Riemannian geometry. The general case was initiated by Paul Finsler in 1918 in his thesis written under the supervision of Carath\'eodory. It was said by S.S. Chern  that {\it Finsler Geometry is just Riemannian Geometry without the Quadratic Restriction} \cite{Chern}. In this article, we are interested in Global Finsler Geometry considered as an intrinsic metric geometry. We often refer to Riemannian geometry for our development of global Finsler geometry. One of the basic differences between  Riemannian and Finsler geometry is the possible asymmetry of distance functions. It turns out that in certain contexts Finsler geometry is more natural than Riemannian geometry, and closer to real world. Here is an example. On a slope of the earth's surface we may consider the ``distance" in terms of time taken to traverse it. Consider a person walking from the bottom of a hill to its top. In this context, the ``distance" will be larger from the bottom to the top, than from the top to the bottom. This example has been emphasized by Herbert Busemann,\index{Busemann, Herbert (1905--1994)} one of the most promieant promoters of Finsler geometry. Busemann's collected works were published in a $2$-volume set by Springer Verlag, see \cite{Busemann2}. Later Makoto Matsumoto explicitly showed that such metric is actually a Finsler metric, see \cite{Matsumoto}. 
\par  \medskip

Let us be more specific.

A Finsler metric on a smooth manifold is a smoothly varying family of Minkowski norms on the tangent spaces, rather than a family of inner products in the case of a Riemannian metric. It turns out that every Finsler metric induces an inner product, one in each direction of a tangent space at each point of the manifold. Thus, a Finsler metric associates to the manifold a family of inner products parametrized by the tangent spaces of the manifold (instead of being parametrized by the manifold, in the case of a Riemannian metric). However, the perpendicularity between two tangent vectors does not make sense on a Finsler manifold. Thus, it seems difficult to talk about the angle between two tangent vectors on such a manifold. In the mathematical literature, several kinds of connections were defined on a Finsler manifold. Some of the well-known connections were introduced  by J.L. Synge, J. H. Taylor, L. Berwald, E. Cartan, H. Rund, H. Hashiguchi and  S.S. Chern and others. In Riemannian geometry, the Levi-Civita connection is the canonical connection. It is torsion free and metrical. There is no  connection in Finsler geometry which is torsion free and metrical. There are different connections which have their own importance. The Chern connection\index{Chern connection} is important from two points of view: firstly when the Finsler metric induces a Riemannian metric, it reduces to the Levi-Civita connection, and secondly, it solves the problem of equivalence in Finsler geometry. This connection is torsion free but not metrical. \par
On Finsler manifolds, geometric objects are two-sided; viz., forward and backward, arising from the asymmetry of the distance function. The study of the cut locus and the conjugate locus of Riemannian and Finsler manifolds is important for the development of global Finsler geometry. In this article we give an overview of some aspects of global Riemannian geometry, developed in the very beginning of the last century, and of extensions of the Riemannian results on the cut locus and conjugate locus to Finsler manifolds. Among others, the cut locus is most important in the study of global Riemannian geometry. We discuss pointed  Blaschke-Finsler manifolds in connection with the Rauch conjecture on the cut locus and the conjugate locus of a compact simply connected Riemannian manifold. It should be emphasized that {\it convex sets and convex functions defined on a Finsler manifold are independent of the non-symmetric property of the distance function}. Hence, the notion of convexity is common to both Riemannian and Finsler geometries. 

The comparison theorems of Rauch, Berger and Toponogov play essential roles in the study of complete Riemannian manifolds of non-negative sectional curvature. However, we do not use these comparison theorems here in our study of Finsler manifolds. Following the ideas from Busemann\index{Busemann, Herbert (1905--1994)}~\cite{Busemann}, we discuss several topics on Finsler manifolds with non-symmetric distance functions. They are (i) the cut locus, (ii) the conjugate locus and (iii) convex sets including the Whitehead convexity theorem, (iv) convex functions, and (v) Busemann functions. We also discuss Busemann functions on both complete Riemannian and Finsler manifolds.  \par  \medskip
The article is organized as follows. Definitions and  notation are set up in \S 2. The forward cut locus and the forward conjugate locus and their fundamental properties, including the classical Whitehead convexity theorem are discussed in \S 3. A detailed discussion on cut locus and conjugate locus, including the classical results due to Klingenberg and Berger, which are very important in this article, are developed in \S 4. We discuss in \S 4, the well-known Blaschke problem on compact Finsler manifolds in connection with the Rauch conjecture~\cite{Rauch}. We discuss the simplest case of a pointed Blaschke manifold. Berger initiated the study of compact simply connected even-dimensional Riemannian manifolds of positive sectional curvature whose diameter is minimal~\cite{B2}, \cite{B1}. Omori \cite{Omori} discussed compact manifolds with minimal diameter with real analytic metric. In \S 5, we discuss the properties of Busemann functions and convex functions on complete non-compact Riemannian and Finsler manifolds. Finally, we summarize  Riemannian and Finsler results on convex functions. Some of these results have already been announced in \cite{SS} and \cite{ST}. For the basic tools in Riemannian and Finsler geometry we refer to \cite{Busemann}, \cite{Chern}, \cite{Klingenberg2}, \cite{CE},  \cite{Besse}, \cite{Sakai}, \cite{BCS}, \cite{CCL}.  \par \medskip
The authors would like to express their sincere thanks to Professor N. Innami,  Professor C. S. Aravinda and Professor Athanase Papadopoulos for reading and giving  their  valuable comments that improved this article. \par
\section{Definitions and preliminaries}
We first give the definitions of Riemannian and  Finsler metrics on a smooth manifold and discuss an important relation between them. The other notions that we present in this section are concerned with the non-symmetric properties of the distance function. 

\subsection{Riemannian and Finsler metrics.}\par
Let $M$ be a smooth manifold of dimension $n\ge 2$ and at each point 
$x\in M$, let $g_x$ be a dot product on the tangent space $T_xM$ to $M$. For smooth vector fields $X, Y$ defined in a neighborhood $U$ of $x$ in $M$, if the function $g(X,Y):U\to\R$ defined as $x\mapsto g_x(X(x),Y(x))$ is smooth, then $g$ is called a Riemannian metric,\index{Riemannian metric} and the pair $(M,g)$ is called a Riemannian\index{Riemannian manifold} manifold.\par
The tangent bundle $TM:=\cup_{x\in M}T_xM$ over $M$ is a smooth $2n$-manifold. Let $F:TM\to\R$ be a continuous function such that:
\begin{enumerate}
	\item $F$ is smooth on $TM\setminus\{0\}$ \ (regularity);
	\item $F(x,cu)=cF(x,u)$ for all $c>0$ and for all $(x,u)\in TM$ \ (positive homogeneity);
	\item $g_{ij}(x,u):=\frac{1}{2}\frac{\partial^2F^2(x,u)}{\partial u^i\partial u^j}$ is a positive definite matrix for all $(x,u)\in TM$ \ (strong convexity).
\end{enumerate} 
The pair $(M,F)$ is called a Finsler manifold\index{Finsler manifold} and $F$ its fundamental function\index{fundamental function}. The positive homogeneity and the strong convexity of $F$ leads us to the following facts:
\begin{Lem} [see \cite{BCS}]
	{\rm     
		Let $(M,F)$ be a Finsler manifold and $x\in M$. If $u,z,w\in T_xM$ and if $u$ is a non-zero vector, then we have
		\begin{enumerate}
			\item $g_{(x,u)}(z,w)=\frac{\partial^2F^2(x,u+sz+tw)}{2\partial t\partial s}|_{(0,0)}$;
			\item $g_{(x,u)}(u,u)=F^2(x,u)$;
			\item $g(x,tu)=g(x,u)$ for all $t>0$.
		\end{enumerate}          
	}     
\end{Lem}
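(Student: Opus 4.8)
The plan is to derive all three identities directly from the positive homogeneity of $F$ (property (2)) together with the definition of the fundamental tensor $g_{ij}(x,u)=\frac12\,\partial^2 F^2/\partial u^i\partial u^j$, using nothing more than the chain rule and Euler's theorem on homogeneous functions. Throughout, $x$ is fixed and we work in a fixed coordinate chart, so $F^2$ and its $u$-derivatives are ordinary smooth functions on $T_xM\setminus\{0\}$; the point $u$ being nonzero is exactly what guarantees we may differentiate $F^2$ freely.

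For part (1), I would start from the coordinate definition $g_{(x,u)}(z,w)=g_{ij}(x,u)z^iw^j=\tfrac12\,\dfrac{\partial^2 F^2(x,u)}{\partial u^i\partial u^j}\,z^iw^j$ and recognize the right-hand side as a directional second derivative: for a smooth function $h$ on an open subset of $\R^n$ one has $h_{ij}(u)z^iw^j=\dfrac{\partial^2}{\partial t\partial s}h(u+sz+tw)\big|_{(0,0)}$, which is just the chain rule applied twice. Applying this with $h=F^2(x,\cdot)$ gives exactly $g_{(x,u)}(z,w)=\dfrac{\partial^2 F^2(x,u+sz+tw)}{2\,\partial t\,\partial s}\Big|_{(0,0)}$. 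The only subtlety is that $F^2(x,\cdot)$ is only $C^\infty$ away from the origin, but since $u\neq 0$ the curve $(s,t)\mapsto u+sz+tw$ stays away from $0$ for $(s,t)$ near $(0,0)$, so the computation is legitimate.

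For part (2), I would invoke Euler's identity for positively homogeneous functions. By property (2), $F(x,\cdot)$ is positively homogeneous of degree $1$, hence $F^2(x,\cdot)$ is positively homogeneous of degree $2$. Differentiating $F^2(x,cu)=c^2F^2(x,u)$ in $c$ at $c=1$ gives Euler's relation $u^i\,\partial F^2/\partial u^i=2F^2$; differentiating once more (either in $c$ again, or in $u^j$ and contracting with $u^j$) yields $u^iu^j\,\partial^2 F^2/\partial u^i\partial u^j=2F^2$, i.e. $g_{(x,u)}(u,u)=\tfrac12 u^iu^j\,\partial^2F^2/\partial u^i\partial u^j=F^2(x,u)$. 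Alternatively, and perhaps more cleanly, I would use part (1) with $z=w=u$: then $g_{(x,u)}(u,u)=\tfrac12\,\dfrac{d^2}{dr^2}F^2\big(x,(1+r)u\big)\big|_{r=0}=\tfrac12\,\dfrac{d^2}{dr^2}(1+r)^2F^2(x,u)\big|_{r=0}=F^2(x,u)$, where the middle step is just $2$-homogeneity.

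For part (3), the claim is that $g_{ij}(x,tu)=g_{ij}(x,u)$ for $t>0$, i.e. the fundamental tensor is $0$-homogeneous in $u$. This follows because the second partials of a $2$-homogeneous function are $0$-homogeneous: from $\partial F^2/\partial u^i (x,tu)\cdot t = t^2\,\partial F^2/\partial u^i(x,u)$ (differentiate $F^2(x,tu)=t^2F^2(x,u)$ in $u^i$) we get $\partial F^2/\partial u^i$ is $1$-homogeneous, and differentiating that relation once more in $u^j$ gives $t\cdot\partial^2F^2/\partial u^i\partial u^j(x,tu)\cdot t = t^2\,\partial^2F^2/\partial u^i\partial u^j(x,u)$, hence $g_{ij}(x,tu)=g_{ij}(x,u)$. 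No step here is a genuine obstacle; the only thing to be careful about is bookkeeping the homogeneity degrees correctly and keeping track that everything is valid only because $u\neq 0$ keeps us on the smooth locus $TM\setminus\{0\}$. I expect the whole proof to be three or four short lines once the chain-rule identity in part (1) is in hand, and I would present part (1) first precisely so that parts (2) and (3) can be read off from it.
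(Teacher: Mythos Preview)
Your proof is correct and is the standard argument. Note, however, that the paper does not actually prove this lemma: it is stated with a reference to \cite{BCS} and no proof is given in the text, so there is no in-paper argument to compare against. Your approach---chain rule for (1), Euler's relation for $2$-homogeneous functions for (2), and degree-counting under differentiation for (3)---is exactly the route taken in \cite{BCS}, so nothing is missing or unconventional here.
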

\medskip
\subsection{Intrinsic distances and geodesics}
Let $(M,F)$ be a Finsler manifold of dimension $\ge 2$. For a smooth curve $c:[a,b]\to(M,F)$ the length 
$L(c)$ is given by
\[      L(c):=\int_a^b\,F(c(t), c'(t))\,dt, \ \ c'(t)=\frac{dc}{dt}.     \]
The reversed curve of  $c$, viz. $t\mapsto c(a+b-t)$, $t\in[a,b],$ is denoted by $c^{-1}$.  
The length of $c^{-1}$ is in general different from that of $c$:
\[      
L(c^{-1})=\int_a^b\,F(c^{-1}(t), (c^{-1})'(t))\,dt.\]     
The intrinsic distance $d(x,y)$ from a point $x\in M$ to a  point 
$y\in M$ is defined by
\[   d(x,y):=\inf\{L(c)\,|\, c\;\text{is a smooth curve from $x$ to $y$}\}.  \]            
We note that in gerneral $d(x,y)\neq d(y,x)$.
The indicatrix $\Si_x\subset T_xM$ at a point $x$ is the set of all unit vectors with respect to $F$:
\[       \Si_x:=\{u\in T_xM\,|\, F(x,u)=1    \}.       \]
The {\it reversibility constant\index{reversibility constant} $\lambda(C)$ of a compact set $C\subset M$} is defined by     
\begin{equation}\label{eq:reversibilityconstant}
\lambda(C):=\sup\left\{\frac{F(x,u)}{F(x,-u)}\,|\, x\in C,\quad u\in T_xM\setminus\{0\}\right\}.
\end{equation}
We then have 
\[     
\lambda(C)^{-1}d(x,y)\le d(y,x)\le \lambda(C)d(x,y),\ {\rm for \ all} \ x,y\in C.
\]        
Let $U$ be an open subset of a Finsler manifold $(M,F)$. Let $\nu U$ be the space of smooth vector fields on $U$ and $\nu U^+ \subset \nu U$ be the subset of nowhere vanishing vector fields. For $V \in \nu U^+$ and for all $X,Y \in \nu U$ define a  trilinear form $\langle \cdot \ , \cdot \ , \cdot \rangle_V$  by $\langle X,Y,Z \rangle_V=\frac{1}{4}\frac{\pa^3}{\pa r \pa s \pa t}F^2(V+rX+sY+tZ)|_{r=s=t=0}$, which is a symmetric $(0,3)$ tensor, called the Cartan tensor.\index{Cartan tensor} The Cartan tensor is a non-Riemannian quantity. It is easy to show that a Finsler metric reduces to a Riemannian metric if and only if its Cartan tensor vanishes.
An affine connection $\nabla^V$ is a map $\nabla^V:(X,Y)\in \nu U \times\nu U \to \nabla^V_XY\in \nu U $, linear in $Y$ (not necessarily linear in $X$) and satisfying the following conditions
$\nabla^V_X(fY)=f\nabla^V_XY+X(f) Y$ and $\nabla^V_{fX}Y=f\nabla^V_XY$ for all $f\in C^{\infty}U$ and $X,Y\in \nu U$. 
\begin{Th}{\rm [see Rademacher \cite{Rademacher}]} {\rm
		Let $(M,F)$ be a Finsler manifold, $U \subset M $ an open set and $V\in \nu U^+$, then there is a unique affine connection $\nabla^V$ associated with $V$, called the Chern connection,\index{Chern connection} satisfying the following conditions: 
		\begin{enumerate}
			\item $\nabla^V$ is torsion free, that is, $\nabla^V_XY-\nabla^V_YX=[X,Y] $ for all $X,Y \in \nu U$.
			\item $\nabla^V$ is almost metrical, that is, 
			\begin{equation*}
				Xg_V(Y,Z)=g_V(\nabla^V_XY,Z)+g_V(Y,\nabla^V_XZ)+2 \langle \nabla^V_XV,Y,Z\rangle_V \text{ for all} \ X,Y,Z \in \nu U .
			\end{equation*}
		\end{enumerate}
	}
\end{Th}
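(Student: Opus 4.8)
\medskip
\noindent\textbf{Proof proposal.}
The plan is to imitate the proof of the fundamental theorem of Riemannian geometry: derive a Koszul-type formula for $g_V(\nabla^V_XY,Z)$ from the two required properties, deduce uniqueness from it, and then take that formula as a definition and check it delivers the required properties. The new feature over the Riemannian case is that the ``almost metrical'' identity carries the extra term $2\langle\nabla^V_XV,Y,Z\rangle_V$, which depends on the unknown connection through $\nabla^V_\bullet V$; hence the Koszul formula comes out \emph{implicit} rather than explicit, and the first job is to make it explicit.

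First I would treat uniqueness. Assuming $\nabla^V$ exists, write the almost-metric identity for the cyclic triples $(X,Y,Z)$, $(Y,Z,X)$, $(Z,X,Y)$, add the first two, subtract the third, and cancel the mixed terms $g_V(Y,\nabla^V_XZ)$ etc.\ using torsion-freeness $\nabla^V_XZ-\nabla^V_ZX=[X,Z]$, exactly as in the Levi--Civita computation. This produces
\begin{equation*}
2g_V(\nabla^V_XY,Z)=\mathcal K(X,Y,Z)-2\langle\nabla^V_XV,Y,Z\rangle_V-2\langle\nabla^V_YV,Z,X\rangle_V+2\langle\nabla^V_ZV,X,Y\rangle_V,
\end{equation*}
where $\mathcal K(X,Y,Z):=Xg_V(Y,Z)+Yg_V(Z,X)-Zg_V(X,Y)-g_V([Y,Z],X)+g_V([Z,X],Y)+g_V([X,Y],Z)$ involves only $g_V$ and Lie brackets. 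To remove the implicit Cartan terms I would invoke the homogeneity fact $\langle\,\cdot\,,\,\cdot\,,V\rangle_V=0$ — which holds because $g_V$ is $0$-homogeneous in its reference direction, so differentiating it once along $V$ gives zero — so every Cartan term with $V$ in a slot drops out. Setting $X=Y=V$ then leaves $2g_V(\nabla^V_VV,Z)=\mathcal K(V,V,Z)$, which pins down $\nabla^V_VV$; setting only $X=V$ and using $\nabla^V_VY=\nabla^V_YV+[V,Y]$ determines $\nabla^V_YV$ for every $Y$ in terms of $\mathcal K$ and the now-known $\nabla^V_VV$; and feeding $\nabla^V_\bullet V$ back into the displayed formula determines $\nabla^V_XY$ in general. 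Uniqueness is then immediate from nondegeneracy of $g_V$.

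For existence I would reverse the construction: \emph{define} $\nabla^V_VV$, then $\nabla^V_YV$, then $\nabla^V_XY$ by the formulas just obtained (each right-hand side is $C^{\infty}U$-linear in its last argument, so $g_V$ nondegenerate yields a well-defined field), and then verify four things. (a) $\nabla^V$ is an affine connection: $\nabla^V_{fX}Y=f\nabla^V_XY$ and $\nabla^V_X(fY)=f\nabla^V_XY+(Xf)Y$ follow from $\mathcal K(fX,Y,Z)=f\mathcal K(X,Y,Z)$, $\mathcal K(X,fY,Z)=f\mathcal K(X,Y,Z)+2(Xf)g_V(Y,Z)$ and the tensoriality of the Cartan tensor. (b) Self-consistency: putting $Y=V$ in the formula for $\nabla^V_XY$ must reproduce the separately defined $\nabla^V_XV$; after the symmetric Cartan terms cancel, this reduces to the identity $\mathcal K(X,V,Z)-\mathcal K(V,X,Z)=2g_V([X,V],Z)$, which holds for $\mathcal K$ generally. (c) Torsion-freeness: $\mathcal K(X,Y,Z)-\mathcal K(Y,X,Z)=2g_V([X,Y],Z)$ and the Cartan terms antisymmetrize to zero. (d) Almost metricity: symmetrizing the displayed formula in $Y,Z$, the brackets in $\mathcal K(X,Y,Z)+\mathcal K(X,Z,Y)$ collapse to $2Xg_V(Y,Z)$ and the Cartan contributions to $-4\langle\nabla^V_XV,Y,Z\rangle_V$, and dividing by $2$ and rearranging gives exactly the asserted identity.

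The only genuinely non-routine step is the bootstrapping in the uniqueness part: since $\nabla^V_\bullet V$ sits on both sides of the Koszul identity, the connection cannot be read off directly as in the Riemannian case. The way out is to exploit $\langle\,\cdot\,,\,\cdot\,,V\rangle_V=0$, specializing first to $X=Y=V$ and then to $X=V$, which turns the implicit system into a triangular one solving successively for $\nabla^V_VV$, then $\nabla^V_\bullet V$, then $\nabla^V_XY$. Everything else is the standard tensorial bookkeeping of the fundamental theorem of Riemannian geometry, carried through with the extra symmetric $(0,3)$-tensor $\langle\,\cdot\,,\,\cdot\,,\,\cdot\,\rangle_V$ in tow.
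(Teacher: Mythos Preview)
The paper does not supply its own proof of this theorem; it is stated with the attribution ``[see Rademacher]'' and no argument is given. Your proposal is correct and is precisely the standard approach one finds in the cited reference and in the Finsler literature generally: a Koszul-type derivation, with the extra bootstrapping step needed because the Cartan term $\langle\nabla^V_\bullet V,\cdot,\cdot\rangle_V$ renders the formula implicit. The crucial observation you isolate --- that $\langle\,\cdot\,,\,\cdot\,,V\rangle_V=0$ by the $0$-homogeneity of $g_V$ in its reference direction (this is exactly Lemma~2.1(3) of the paper) --- is what makes the system triangular and hence solvable in the order $\nabla^V_VV$, then $\nabla^V_\bullet V$, then the general $\nabla^V_XY$. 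The verifications you list for existence (affine-connection axioms, self-consistency at $Y=V$, torsion-freeness from the antisymmetry of $\mathcal K$, almost-metricity from the symmetrization) are the right ones and go through as you describe. There is nothing to correct.
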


Using the connection $\nabla^V$, we introduce the covariant derivative $\frac{\nabla^V}{dt}$ along a smooth curve $c:[a,b]\to M$. For a vector field $X$ along the curve $c$ with tangent vector field $c'$, define $\frac{\nabla^V}{dt}X(t)=\nabla^V_{c'}X(t)$. If the vector fields $V$ and $c'$ along $c$ coincide, we also write $\frac{\nabla^V}{dt}X(t)=\frac{\nabla}{dt}X(t)$.\par 
Let $\g :[0,1]\to (M,F)$ be a   smooth curve on a Finsler manifold $(M,F)$. Then $\g$ is said to be a forward geodesic if $\frac{\nabla}{dt}\g '(t)=0$, for all $t \in [0,1]$.  In the local coordinates, if $\gamma'(t)=\frac{dx^i}{dt}\frac{\pa}{\pa x^i}$ and $\Gamma^k_{ij}(x,y)$ are components of the Chern connection (see \cite{Rademacher}, \cite{BCS}), then forward geodesics are the solutions of the second order non-linear differential equations 
\begin{equation*}\frac{d^2x^k}{dt^2}+\Gamma^k_{ij}(x,y)\frac{dx^i}{dt}\frac{dx^j}{dt}=0.
\end{equation*} A vector field $V\in \nu U$ is said to be a geodesic vector field if $\nabla^V_VV=0$, that is, if all the flow lines of $V$ are geodesics.  
\begin{Prop}[see Rademacher \cite{Rademacher}] \par
	{\rm
		Let $V$ be a nowhere-vanishing geodesic vector field defined on an open subset $U \subset M $. Denote by $\overline{\nabla}$, the Levi-Civita connection of the Riemannian manifold $(U,g_V)$ then   $\nabla^V_XV=\overline{\nabla}_XV$, for all vector fields $X$, in particular, the vector field $V$ is also a geodesic vector field for the Riemannian manifold $(U,g_V)$.
	}
\end{Prop}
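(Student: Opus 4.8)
\noindent\emph{Proof idea.} The plan is to compare $\nabla^V$ with $\overline\nabla$ through their Koszul formulas and to observe that the discrepancy between them, which is governed by the Cartan tensor, vanishes as soon as one of the arguments is $V$. Throughout write $C_V(X,Y,Z):=\langle X,Y,Z\rangle_V$ for the Cartan tensor; recall that it is a totally symmetric $(0,3)$-tensor, and that $C_V(V,\cdot\,,\cdot)=0$. This last fact is the infinitesimal form of the $0$-homogeneity $g(x,tu)=g(x,u)$ recorded at the beginning of this section: differentiating in $t$ at $t=1$ gives $u^{i}C_{ijk}=0$, and total symmetry then moves the $V$-annihilation into any slot.

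First I would derive a Koszul-type formula for the Chern connection. Writing the almost-metrical identity
\[ X g_V(Y,Z)=g_V(\nabla^V_XY,Z)+g_V(Y,\nabla^V_XZ)+2\,C_V(\nabla^V_XV,Y,Z) \]
for the three cyclic permutations of $(X,Y,Z)$, adding the first two and subtracting the third, and isolating the term $g_V(\nabla^V_XY,Z)$ by means of the torsion-free identity $\nabla^V_XY-\nabla^V_YX=[X,Y]$, one obtains the right-hand side of the classical Koszul formula (in $X,Y,Z$ and $g_V$) augmented by the single extra term $-2\bigl(C_V(\nabla^V_XV,Y,Z)+C_V(\nabla^V_YV,Z,X)-C_V(\nabla^V_ZV,X,Y)\bigr)$. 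Subtracting the ordinary Koszul formula for the Levi-Civita connection $\overline\nabla$ of $(U,g_V)$, all the common terms cancel and one is left with
\[ g_V\bigl(\nabla^V_XY-\overline\nabla_XY,Z\bigr)=-\bigl(C_V(\nabla^V_XV,Y,Z)+C_V(\nabla^V_YV,Z,X)-C_V(\nabla^V_ZV,X,Y)\bigr). \]

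Next I would specialize to $Y=V$. The middle term on the right, $C_V(\nabla^V_VV,Z,X)$, vanishes because $V$ is a geodesic vector field, i.e.\ $\nabla^V_VV=0$; the first term $C_V(\nabla^V_XV,V,Z)$ and the third term $C_V(\nabla^V_ZV,X,V)$ vanish because the Cartan tensor annihilates $V$ in any slot. Hence $g_V(\nabla^V_XV-\overline\nabla_XV,Z)=0$ for every vector field $Z$, and nondegeneracy of $g_V$ forces $\nabla^V_XV=\overline\nabla_XV$ for all $X$. Putting $X=V$ then gives $\overline\nabla_VV=\nabla^V_VV=0$, so $V$ is a geodesic vector field for $(U,g_V)$ as well.

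The routine part is the derivation of the Koszul formula carrying the Cartan correction term --- the standard cyclic-permutation manipulation with one extra term along for the ride. The point I expect to be the real crux is the observation that makes everything collapse: once one contracts with $V$, this correction term disappears, half of it because $\nabla^V_VV=0$ and the rest because $C_V$ is radially degenerate. Everything after that is a one-line nondegeneracy argument, and the homogeneity facts about $C_V$ follow at once from the Lemma stated at the beginning of this section.
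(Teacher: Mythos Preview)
The paper does not give its own proof of this proposition: it is stated with the attribution ``[see Rademacher \cite{Rademacher}]'' and left unproved. So there is nothing in the paper to compare your argument against.

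That said, your argument is correct and is essentially the standard proof one finds in the Finsler literature (including in Rademacher's article). The Koszul manipulation is carried out correctly: adding the cyclic permutations of the almost-metrical identity, using torsion-freeness, and subtracting the classical Koszul formula for $\overline\nabla$ does give exactly
\[
g_V\bigl(\nabla^V_XY-\overline\nabla_XY,\,Z\bigr)
  = -\bigl(C_V(\nabla^V_XV,Y,Z)+C_V(\nabla^V_YV,Z,X)-C_V(\nabla^V_ZV,X,Y)\bigr).
\]
Your specialization $Y=V$ is also correct: two of the three Cartan terms die because $V$ occupies a slot of $C_V$ (the radial degeneracy $C_V(V,\cdot,\cdot)=0$, which you rightly read off from property~(3) in Lemma~2.1), and the remaining term dies because $\nabla^V_VV=0$. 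The nondegeneracy of $g_V$ then finishes it. Nothing is missing.
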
 \medskip
\subsection{The exponential map and geodesic completeness} A forward geodesic $\g_u:[0,h)\to(M,F)$ with the initial conditions $\g_u(0):=x$ 
and $\dot\g_u(0):=u$ is the solution of a non-linear second order differential equation with smooth coefficients. Let $\Om_x\subset T_xM$ be the star-shaped domain with respect to the origin of $T_xM$, such that
\begin{equation}\label{eq:exponentialdomain}
\Om_x=\{ u\in T_xM\,|\, \g_u(1)\;\text{is defined}\}.    
\end{equation}      
We then define the exponential map $\exp_x:\Om_x\to(M,F)$ at $x$ by
\[       \exp_x\,u:=\g_u(1),\quad u\in \Om_x.  \]
We say that $(M,F)$ is {\it forward geodesically complete}  if $\Om_x=T_xM$  at some point $x\in M$. It then follows that $\Om_y=T_yM$ for all $y\in M$. The classical Hopf-Rinow theorem states that any two points on a forward geodesically complete $(M,F)$ are joined by a forward minimizing geodesic.\par
A forward (resp. backward) Cauchy sequence $\{q_j\}_{j=1}^{\infty}$ is defined by the condition that for every $\e>0$ there exists an integer $N_\e$ 
such that     
\[ 
d(q_j,q_k)<\e\, ({\rm resp.}\,  d(q_k,q_j)<\e), \  \text{for all $N_\e<j<k$}.   
\]
We say that $(M,F)$ is {\it forward complete {\rm (resp}. backward complete}) if every forward (resp. backward) Cauchy sequence converges.\par\medskip
\begin{Rem}\quad 
	{\rm
		If $F(p,u)=F(p,-u)$ holds for all $(p,u)\in TM$, then all the completeness conditions as above are equivalent to each other.}
	\end{Rem} 
	\begin{Prop} [see \cite{BCS}]
		{\rm
			If a Finsler manifold $(M,F)$ is forward geodesically complete, then $(M,F)$ is forward complete.
		}
	\end{Prop}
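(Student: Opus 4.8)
\textit{Proof proposal.} The plan is to mimic the classical Hopf--Rinow argument, taking care that the asymmetry of $d$ does not spoil the triangle estimates. Fix a forward Cauchy sequence $\{q_j\}$ and a base point $p:=q_1$. First I would check that $\{q_j\}$ is forward bounded from $p$: applying the Cauchy condition with $\e=1$ produces an $N$ with $d(q_j,q_k)<1$ whenever $N<j<k$, hence $d(p,q_k)\le d(p,q_{N+1})+d(q_{N+1},q_k)<d(p,q_{N+1})+1$ for all $k>N+1$; enlarging this to take care of the finitely many remaining indices gives a constant $R>0$ with $d(p,q_j)\le R$ for every $j$.

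Next, by forward geodesic completeness and the Hopf--Rinow theorem quoted in \S 2, each $q_j$ is joined to $p$ by a forward minimizing geodesic $\g_j:[0,1]\to M$ with $\g_j(0)=p$; setting $u_j:=\dot\g_j(0)\in T_pM$, constancy of $F$ along a geodesic gives $F(p,u_j)=L(\g_j)=d(p,q_j)\le R$ and $q_j=\exp_p(u_j)$. So all the $u_j$ lie in $\overline D_R:=\{u\in T_pM\mid F(p,u)\le R\}$. The restriction of $F$ to $T_pM$ is a Minkowski norm, i.e. continuous on $T_pM$ and strictly positive off the origin; comparing it on the compact Euclidean unit sphere of $T_pM$ with a fixed auxiliary Euclidean norm $|\cdot|$ yields constants $0<m\le M$ with $m|u|\le F(p,u)\le M|u|$, so $\overline D_R$ is closed and contained in a Euclidean ball, hence compact. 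Passing to a subsequence, $u_{j_k}\to u_\infty\in\overline D_R$. Since $(M,F)$ is forward geodesically complete we have $\Om_p=T_pM$, and $\exp_p$ is defined and continuous on all of $T_pM$ by continuous dependence of the geodesic equation on its initial data; therefore $q_{j_k}=\exp_p(u_{j_k})\to\exp_p(u_\infty)=:q$.

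It remains to upgrade convergence of this subsequence to convergence of the whole sequence. Given $\e>0$, choose $N_\e$ with $d(q_j,q_l)<\e$ for $N_\e<j<l$, and then an index $j_k>N_\e$ with $d(q_{j_k},q)<\e$; the latter is possible because $q_{j_k}\to q$ in the manifold topology and, on a relatively compact neighbourhood $K$ of $q$, finiteness of the reversibility constant $\lambda(K)$ gives $d(q_{j_k},q)\le\lambda(K)\,d(q,q_{j_k})\to0$. Then for every $j>j_k$ the triangle inequality, used in the correct order, yields $d(q_j,q)\le d(q_j,q_{j_k})+d(q_{j_k},q)<2\e$, so $d(q_j,q)\to0$, i.e. $q_j\to q$. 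Hence every forward Cauchy sequence converges and $(M,F)$ is forward complete.

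The step I expect to require the most care is the last one: because $d$ is not symmetric, one must track the order of the arguments in every triangle inequality and, to pass between ``$q_{j_k}\to q$ in the manifold topology'' and the forward estimate $d(q_{j_k},q)\to0$, invoke finiteness of the reversibility constant on a compact neighbourhood of the limit. The compactness of $\overline D_R$ and the continuity of $\exp_p$ are then routine, the only non-formal input being the elementary fact that a Minkowski norm on a finite-dimensional vector space is equivalent to a Euclidean one.
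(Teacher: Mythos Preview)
The paper does not prove this proposition; it merely records it with a reference to \cite{BCS}. Your argument follows the standard Hopf--Rinow route one finds there (bound the sequence forward from a base point, pull back to a compact set in $T_pM$ via $\exp_p$, extract a convergent subsequence, then use the Cauchy condition to upgrade), and it is essentially correct. There is, however, one genuine slip in the last step---precisely the ordering issue you flagged as the delicate point.

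In upgrading subsequential to full convergence you fix a single index $j_k>N_\e$ and then consider all $j>j_k$, claiming $d(q_j,q_{j_k})<\e$. But the forward Cauchy condition in the paper reads $d(q_j,q_l)<\e$ for $N_\e<j<l$, i.e.\ it controls the distance with the \emph{smaller} index in the first slot. For $j>j_k$ you have $j_k<j$, so the hypothesis yields $d(q_{j_k},q_j)<\e$, not $d(q_j,q_{j_k})$; the asymmetry of $d$ bites exactly here. The fix is immediate: for each $j>N_\e$, use the infinitude of the subsequence and $d(q_{j_k},q)\to0$ to pick an index $j_k>j$ with $d(q_{j_k},q)<\e$. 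Then $N_\e<j<j_k$ gives $d(q_j,q_{j_k})<\e$ in the correct order, and
\[
d(q_j,q)\le d(q_j,q_{j_k})+d(q_{j_k},q)<2\e.
\]
With this correction the argument goes through.
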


	\section{Forward cut Locus and forward conjugate locus}\par
	Let $(M,F)$ be a forward geodesically complete Finsler manifold, i.e. at each point $x\in M$ the exponential map $\exp_x: T_xM\to(M,F)$  is defined on the whole tangent space. 
	\subsection{Forward cut locus and forward conjugate locus}\index{forward cut locus}
	The forward cut locus\index{forward cut locus} and forward conjugate locus\index{forward conjugate locus} to a point $x \in (M,F)$, denoted by $C(x)$ and $J(x)$ respectively, are subsets of $M$ and have a significant role in the study of global differential geometry of Finsler manifolds. In particular, the forward cut locus to a point $x\in(M,F)$ {\it equipped with  the equivalence relation provided by the exponential map} contains all the topological information of $(M,F)$. We will define them shortly. Let $\g_u:[0,a]\to(M,F)$, for a unit vector $u\in\Si_x$, be a unit speed geodesic with $\g_u(0):=x$, $\dot\g_u(0)=u$. Define a function $i_x:\Si_x\to\R$ by
	\[       i_x(u):=\sup\{s>0\,|\, t=d(x,\g_u(t)), \ \text{for all} \ t\in(0,s)\}.   \] 
	The point $\g_u(i_x(u))$ is called the {\it forward cut point to $x$ along $\g_u$}, $\;u\in\Si_x$. In the case where $i(x)=\infty$, we call $x$ {\it a forward pole of $M$}. The forward injectivity radius function\index{injectivity radius function} at $x$ is defined by 
	\[ i(x):=\inf\{i_x(u)\,|\, u\in\Si_x\}. \]
	Let $v\in\Si_x$ be a unit vector with $g_u(u,v)=0$. Here we employ the Riemannian metric $g$ in (3.2) defined on $U_x\setminus\{x\}$ which will be described in the next section. We define the Jacobi field $Y_{u,v}:[0,a]\to TM$ along $\g_u$ such that $Y_{u,v}(0)=0$, $\nabla_{\frac{\partial}{\partial t}}Y_{u,v}(0)=v$. Namely, it is defined by
	\begin{equation}\label{eq:Jacobifield}  
	Y_{u,v}(t):=d(\exp_x)_{tu}tv,\quad v\in\Si_x,\quad g_u(u,v)=0,\quad t\in[0,a].
	\end{equation}
	The forward first conjugate point along $\g_u$ is defined as follows: Let $c_x:\Si_x\to(0,\infty)$ be a function defined by 
	\[     c_x(u):=\sup\{\,s>0\,|\, {\rm det}(d(\exp_x)|_{tu})\neq 0,\quad\text{for all} \ t\in(0,s)\},\quad u\in\Si_x.   \]
	In other words, a non-trivial Jacobi field $Y$ along $\g_u$, with $Y(0)=0$, exists such that $Y(c_x(u))=0$ and $Y(t)\neq 0$ for all $t\in(0,c_x(u))$. The point $\g_u(c_x(u))$ is called the {\it forward first conjugate point to $x$ along $\g_u$}.\par
	The forward tangential cut locus\index{forward tangential cut locus} $\wC(x)\subset T_xM$ and the forward first tangential conjugate locus\index{forward first tangential conjugate locus} $\wJ(x)\subset T_xM$  are defined by
	\[   \wC(x):=\{i_x(u)u\,|\, u\in\Si_x\},\quad \wJ(x):=\{c_x(u)u\,|\, u\in\Si_x\},     \] 
	and their exponential images are the forward cut locus and forward conjugate locus to $x$ respectively, and denoted by
	\[    C(x):=\exp_x\wC(x),\quad J(x):=\exp_x\wJ(x).     \]
	The domain containing the origin of $T_xM$ and bounded by $\wC(x)$ is denoted by $ \wU _x $. Clearly we have $ \partial \wU_x = \wC(x)$, and $\wU_x\subset T_xM $ is the maximal domain on which $ \exp_x $ is an embedding and denote $\exp_x \wU _x $ by $U_x$. 
	We observe from the definition of the cut locus to a point $x \in (M,F)$ that $C(x)$ contains all the information of $M$. In fact $ M \setminus C(x)$ is just an open disk and the identification structure of $\wC(x)$ via the exponential map defines the manifold.
	\par\medskip
	\subsection{Geodesic polar coordinates}\par
	We define  geodesic polar coordinates around an arbitrary fixed point $x\in M$. Let $\phi:\Si_x\times(0,i_x)\to(M,F)$ be defined by
	\[      \phi(u,t):=\exp_x\,tu,\quad 0<t<i_x.  
	\]
	The map $\phi$ is a diffeomorphism of $S^{n-1}\times (0,i_x)$ via identification of the indicatrix with the unit sphere $S^{n-1}\subset T_xM$ through the central projection. Property (3) in  Lemma 2.1 defines a Riemannian metric $g_u$ along $\g_u$. 
	Let $\xi$ be a radial vector field on $U_x$, i.e.,  $\xi(y):=d\text{exp}_{tu}(u), \, u\in\Sigma_x\ $, $y=\text{exp}_x(tu), \, 0<t<i_x$. Thus we have a smooth Riemannian metric $g$ on $U_x\setminus\{x\}$ defined by,
	\begin{equation}\label{eq:Riemannianmetric}
	g(y):=\bigcup_{y\in U_x}\,g(y,\xi(y)), \, y\in U_x\setminus\{x\} .          
	\end{equation}      
	The polar coordinates centered at $x$ are defined using $\phi$. All the $F$-geodesics emanating from $x$ are identified with geodesics as a Riemannian manifold $(U_x,g)$. The well-known first and second variation formulas  along a geodesic $\g_u$ with $u\in\Si_x$ are valid for $(U_x,g)$. Thus we know that $q:=\g_u(c_x(u))$ is a conjugate point to $x$ along $\g_u$ if and only if there is a non-trivial Jacobi field $Y$ along $\g_u$ such that $Y(0)=Y(c_x(u))=0$. If a unit speed geodesic $\si:[0,a]\to(M,F)$ admits a conjugate pair in its interior, then there is a $1$-parameter variation $\a:(-h,h)\times[0,a]\to M$ along $\si$ with $\a(\e,0)=x$ and $\a(\e,a)=\si(a)$ for all $\e\in(-h,h)$, such that all of its variational curves have lengths less than $a$. This means that
	\begin{equation}\label{eq:cutconjugatelocus}
	c_x(u)\ge i_x(u),\quad \text{for all} \ u\in\Si_x, \quad \text{for all} \  x\in M.
	\end{equation}
	We observe that if $\si:[0,a]\to(M,F)$ is a minimizing geodesic and if $\si(a)$ is conjugate to $\si(0)$ along $\si$, then $\si(a)$ is the cut point to $\si(0)$ along it.
	\par\medskip
	\subsection{The Whitehead Convexity Theorem}
	We define three kinds of convex sets on a complete Finsler manifold $(M,F)$.\par
	\begin{Def}
		{\rm
			A set $V\subset M$ is by definition {\it convex} if any pair of points $x,y\in V$ is joined by a unique minimizing geodesic whose image is contained entirely in $V$. The existence of a convex ball centered at every point on $(M,F)$ is stated in the Whitehead Convexity Theorem 3.1 below. Let $B(x,\de(x))$ be a convex $\de(x)$-ball around $x$. A closed set $V\subset M$ is called {\it locally convex} if every $x\in V$ has the property that $V\cap B(x,\de(x))$ is convex. A set $V\subset M$ is called {\it totally convex} if every geodesic joining two points in $V$ is contained entirely in $V$.
		}
	\end{Def}
	In the definition of locally convex sets above, the property of being closed is crucial; for, every open set would be locally convex. If two points $x$ and $y$ in a convex set $U$ are joined by a non-minimizing geodesic, then the latter is not necessarily contained in $U$.
	For example, a closed hemi-sphere in the standard sphere $\bS^n$ is locally convex and an open hemi-sphere is convex. $\bS^n$ itself is the only totally convex subset of itself. Every sublevel set $\phi^{-1}(-\infty,a]$ of a convex function $\phi:(M,F)\to\R$ defined on a complete Finsler manifold $(M,F)$ is totally convex.\par\medskip
	J. H. C. Whitehead investigated the injectivity radius in \cite{Whitehead2} and the convexity radius in \cite{Whitehead1}. We describe here some of his results: \par
	Let $\mathfrak U:=\cup_{x\in M}\wU_x\subset TM$. The natural projection $\Pi:TM\to M$ and the exponential map together define a smooth map $(\Pi,\exp):\mathfrak U\to M\times M$ by:
	\[ 
	(\Pi,\exp)(x,u):=(\Pi u,\,\exp_{_\Pi u}\, u)\in M\times M. 
	\]
	The image $(\Pi,\exp)(x,u)$ of $(x,u)\in\mathfrak U$ is the pair of  initial and end points of the geodesic $\g_u:[0,1]\to(M,F)$. Clearly, the zero section $O\subset\mathfrak U$ has the following property: $d(\Pi,\exp)|_O={\rm Identity}$; hence we have a small neighborhood $\Om\subset TM$ around the zero section and a small neighborhood $U(\De)\subset M\times M$ around the diagonal $\De$ of $M\times M$ such that
	\[     
	d(\Pi,\exp)|_\Om:\Om\to U(\De)\quad\text{is a diffeomorphism}.
	\]
	This fact means that any pair of points sufficiently close to each other is joined by a unique minimizing forward geodesic (compare \cite{Whitehead2}). If $C\subset M$ is a compact set, then there exists a  number $\a(C)>0$ such that if $x,y\in C$ satisfies $d(x,y)<\a(C)$, then
	there is a unique minimizing geodesic joining $x$ to $y$. Summing up, we have:
	\begin{Prop}
		{\rm
			Let $C\subset M$ be a compact set. Then for every point $x\in C$, its injectivity radius $i(x)$ is bounded below by a positive number $\a(C)$, i.e., $i(x)\ge\a(C)$ for all $x\in C$.
		}
	\end{Prop}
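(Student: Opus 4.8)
The plan is to bound the forward cut distance $i_x(u)$ below by a single positive constant, uniformly over all $x\in C$ and all unit directions $u\in\Sigma_x$; since $i(x)=\inf_{u\in\Sigma_x}i_x(u)$, this gives the assertion. The two inputs are a uniform lower bound for the conjugate radius over $C$ and the local uniqueness of short minimizing geodesics recorded just above; they are glued together by the classical dichotomy for cut points, which in the forward--Finsler form reads: if $q=\gamma_u(i_x(u))$ is the forward cut point to $x$ along $\gamma_u$, then either $q$ is conjugate to $x$ along $\gamma_u$ (equivalently $i_x(u)=c_x(u)$), or there is a forward minimizing geodesic from $x$ to $q$ whose initial velocity differs from $u$.

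First I would establish the conjugate radius bound. Along the zero section $d(\Pi,\exp)$ is the identity, hence it is nonsingular on the neighbourhood $\Omega$ of the zero section from the discussion above; writing $d(\Pi,\exp)$ in block form with respect to the horizontal/vertical splitting on the source, the block into the first factor from vertical vectors vanishes (it is $d\Pi$ on fibre directions) and the horizontal-into-first-factor block is an isomorphism, so nonsingularity of $d(\Pi,\exp)$ at $(x,u)$ is equivalent to nonsingularity of $d(\exp_x)|_u$. The set $\{(x,u):x\in C,\ u\in\Sigma_x\}$ is compact, so a routine argument produces $\beta(C)>0$ such that the $F$-ball of radius $\beta(C)$ in $T_xM$ is contained in $\Omega$ for every $x\in C$; consequently $c_x(u)\ge\beta(C)$ for all $x\in C$ and $u\in\Sigma_x$.

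Next, using the Finsler Hopf--Rinow theorem, let $C'$ be the closure of the forward $1$-neighbourhood $\{y\in M:d(x,y)<1\ \text{for some}\ x\in C\}$; this is compact and contains $C$. Applying the uniqueness statement recalled above to $C'$ yields $\alpha_0>0$, which we may take $\le 1$, such that any two points of $C'$ at distance $<\alpha_0$ are joined by a unique forward minimizing geodesic. Put $\alpha(C):=\min\{\beta(C),\alpha_0\}$. Suppose that $i_x(u)<\alpha(C)$ for some $x\in C$, $u\in\Sigma_x$. Then $i_x(u)<\beta(C)\le c_x(u)$, so the cut point $q=\gamma_u(i_x(u))$ is \emph{not} conjugate to $x$ along $\gamma_u$; by the dichotomy there is a forward minimizing geodesic from $x$ to $q$ distinct from $\gamma_u|_{[0,i_x(u)]}$. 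But $d(x,q)=i_x(u)<\alpha(C)\le 1$ forces $q\in C'$, and $x\in C\subset C'$ with $d(x,q)<\alpha(C)\le\alpha_0$, so the minimizing geodesic from $x$ to $q$ is unique --- a contradiction. Hence $i_x(u)\ge\alpha(C)$ for all $x\in C$, $u\in\Sigma_x$, and therefore $i(x)\ge\alpha(C)$ for every $x\in C$.

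The step I expect to be the genuine obstacle is the cut-point dichotomy in the Finsler category. It is not formal: one takes parameters $t_k\downarrow i_x(u)$, chooses minimizing geodesics from $x$ to $\gamma_u(t_k)$, extracts a subsequence converging to a minimizing geodesic $\sigma$ from $x$ to $q$, and must show that if $\sigma=\gamma_u|_{[0,i_x(u)]}$ then $q$ is conjugate to $x$. That last implication relies on the second variation of arc length, run in the Riemannian metric $g$ along $\gamma_u$ from the geodesic polar coordinates above, together with the fact that away from conjugate points $\exp_x$ is a local diffeomorphism, so nearby endpoints have a \emph{unique} nearby minimizer. Once this is in hand, the conjugate radius bound is a standard compactness argument and the rest is bookkeeping; the only subtlety there is keeping the cut points of base points in $C$ inside a fixed compact set, which is the role of $C'$ and the Finsler Hopf--Rinow theorem.
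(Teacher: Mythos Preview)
Your argument is correct, but it is considerably more elaborate than what the paper does. In the paper the proposition is presented as a direct summary of the paragraph preceding it: the map $(\Pi,\exp)$ is a diffeomorphism from a neighbourhood $\Omega$ of the zero section onto a neighbourhood $U(\Delta)$ of the diagonal, and compactness of $C$ gives $\alpha(C)>0$ so that the closed $F$-ball of radius $\alpha(C)$ in $T_xM$ lies in $\Omega$ for each $x\in C$. From this, the injectivity radius bound follows in one stroke: for $t<\alpha(C)$ and $u\in\Sigma_x$, a minimizing geodesic $\sigma$ from $x$ to $\gamma_u(t)$ has length $\le t<\alpha(C)$, so both $tu$ and $L(\sigma)\dot\sigma(0)$ lie in $\Omega\cap T_xM$ and map to the same point under $\exp_x$; injectivity of $\exp_x$ on $\Omega\cap T_xM$ forces $\sigma=\gamma_u|_{[0,t]}$, hence $d(x,\gamma_u(t))=t$. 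No separate conjugate-radius estimate and no appeal to the cut-point dichotomy are needed.

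Your route instead splits the problem into a conjugate-radius bound (extracted from nonsingularity of $d(\Pi,\exp)$, which you correctly reduce to nonsingularity of $d(\exp_x)$) and the dichotomy for cut points, which in the paper is Proposition~\ref{Prop:K} in \S4. This is a legitimate alternative and there is no circularity, since the proof of Proposition~\ref{Prop:K} does not use the present proposition. The enlargement $C\subset C'$ to keep the putative cut point inside the set where uniqueness is available is a necessary detail you handle correctly. The cost of your approach is the forward reference and the extra machinery; its benefit is that it makes explicit the two mechanisms that could shorten a geodesic (conjugacy and bifurcation), whereas the paper's argument rules both out simultaneously via the single diffeomorphism statement.
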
   
	\begin{Th}[The Whitehead convexity theorem~\cite{Whitehead1}, ~\cite{Whitehead2}]\label{Th:Whitehead}\par
		{\rm
			\index{Whitehead convexity theorem}\index{Theorem!Whitehead convexity}There exists for every point $x\in(M,F)$, a positive number $\de(x)$ such that if $r\in(0,\de(x))$, then a forward metric $r$-ball $B(x,r):=\{y\in M\,|\, d(x,y)<r\}$ has the property that any pair of  points $y, z\in B(x,r)$ is joined by a unique minimizing geodesic whose image is contained entirely in the $B(x,r)$. 
		}
	\end{Th}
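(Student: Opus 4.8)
\smallskip
\noindent\emph{Proof proposal.}
The plan is to adapt the standard proof of the Whitehead theorem, in the form based on strict convexity of $\tfrac12\,d(x,\cdot)^2$ along geodesics; its two ingredients are already available above --- Proposition~3.2 together with the near-diagonal diffeomorphism $d(\Pi,\exp)|_\Om:\Om\to U(\De)$, which produce unique minimizing geodesics between sufficiently close points of a compact set, and the smoothness of the forward distance $r:=d(x,\cdot)$ away from $x$ on $U_x$. First I would fix $x$, choose a compact neighbourhood $K$ of $x$, and set $\lambda:=\lambda(K)$ and $\a_K:=\a(K)$. Then I would take $\de(x)>0$ to be the least of the thresholds produced by requiring all of the following.
\emph{(i)} $\overline B(x,(2+\lambda)\de(x))\subset K\cap U_x$; this is possible because $x$ lies in the interior of $K$, $i(x)>0$, and $B(x,i(x))\subset U_x$. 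On $U_x$ one has $r=F\big(x,\exp_x^{-1}(\cdot)\big)$, smooth off $x$, since for $v\in\wU_x$ the radial geodesic $t\mapsto\exp_x(tv)$ is minimizing up to $t=1$.
\emph{(ii)} $(1+\lambda)\de(x)<\a_K$; then any $y,z\in B(x,\de(x))\subset K$ satisfy $d(y,z)\le d(y,x)+d(x,z)<(1+\lambda)\de(x)<\a_K$, so they are joined by a unique minimizing geodesic $\si:[0,\ell]\to M$ (unit speed), and since $d(x,\si(t))\le d(x,y)+d(y,\si(t))\le d(x,y)+d(y,z)<(2+\lambda)\de(x)$, the image of $\si$ lies in $\overline B(x,(2+\lambda)\de(x))\subset U_x$.
\emph{(iii)} for every $p\in\overline B(x,(2+\lambda)\de(x))\setminus\{x\}$ and every $w\in T_pM\setminus\{0\}$, the Chern Hessian of $\tfrac12 r^2$ with reference vector $w$ is positive definite, i.e. $\big(\nabla^w d(\tfrac12 r^2)\big)(w,w)>0$.

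The substance is in \emph{(iii)}, which I expect to be the main obstacle. Using the chart $\exp_x^{-1}:U_x\to\wU_x\subset T_xM$ centred at $x$ (with linear coordinates $v^i$ on $T_xM$), the function $f:=\tfrac12 r^2$ becomes $v\mapsto\tfrac12 F(x,v)^2$, so its ordinary coordinate Hessian at $v\ne0$ is $\pa_i\pa_j f(v)=g_{ij}(x,v)=g_{ij}\big(x,v/F(x,v)\big)$, which is uniformly positive definite for $v$ in a bounded region by strong convexity, positive homogeneity (Lemma~2.1(3)), and compactness of $\Si_x$. Writing $v(t):=\exp_x^{-1}(\si(t))$ for the chart representative of a unit-speed $F$-geodesic $\si$, the geodesic equation $\ddot v^k+\G^k_{ij}(v,\dot v)\dot v^i\dot v^j=0$ yields
\begin{equation*}
\frac{d^2}{dt^2}\big(f(\si(t))\big)=g_{ij}(x,v)\,\dot v^i\dot v^j-\G^k_{ij}(v,\dot v)\,\dot v^i\dot v^j\,\pa_k f(v),
\end{equation*}
and here $\pa_k f(v)=g_{kl}(x,v)\,v^l\to0$ as $v\to0$, while the Chern coefficients $\G^k_{ij}$ stay bounded on the compact unit bundle over $\overline B(x,(2+\lambda)\de(x))$. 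Hence the first term on the right is bounded below by a fixed positive constant and the second is $O(\de(x))$, so \emph{(iii)} holds once $\de(x)$ is small; in particular $\frac{d^2}{dt^2}\big(f(\si(t))\big)>0$ wherever $\si(t)\ne x$. The delicate point is that $F^2$, and hence $r^2$, is only $C^1$ and not $C^2$ at $x$, so one cannot merely invoke continuity of a Hessian at $x$; the uniform lower bound above is, in effect, the uniform convexity of the infinitesimally small forward geodesic spheres, consistent with $Y_{u,v}(t)=tv+O(t^2)$ from~\eqref{eq:Jacobifield}. The non-symmetry of $F$ also forces the constant-tracking with $\lambda$ that keeps $\si$ inside $U_x$, where alone $r$ is smooth.

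Granting \emph{(i)}--\emph{(iii)}, the theorem follows. Fix $r\in(0,\de(x))$ and $y,z\in B(x,r)$; the case $y=z$ is trivial, so assume $y\ne z$. By \emph{(ii)} there is a unique minimizing geodesic $\si:[0,\ell]\to M$ from $y$ to $z$, with image in $U_x$. Put $h:=\tfrac12\,r^2\circ\si$; then $h(0)<\tfrac12 r^2$ and $h(\ell)<\tfrac12 r^2$ because $y,z\in B(x,r)$. At any interior $t$ with $\si(t)\ne x$, $h$ is smooth near $t$ and, since $\si$ is a geodesic (which removes the gradient term), $h''(t)=\big(\nabla^{\si'(t)}d(\tfrac12 r^2)\big)(\si'(t),\si'(t))>0$ by \emph{(iii)}; thus $h$ has no interior local maximum there. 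Also $h$ vanishes at any interior $t$ with $\si(t)=x$, so $h$ could attain its maximum at such a $t$ only if $h\equiv0$, i.e. $y=z$, which is excluded. Therefore $\max_{[0,\ell]}h$ is attained at $t=0$ or $t=\ell$, so $h(t)<\tfrac12 r^2$ for all $t$; that is, $\si([0,\ell])\subset B(x,r)$. Since $\si$ is moreover the unique minimizing geodesic from $y$ to $z$, this is exactly the assertion, with the $\de(x)$ constructed above.
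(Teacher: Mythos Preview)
Your argument is correct. The route, however, differs from the paper's. The paper works with the distance $r=d(x,\cdot)$ itself (not $r^2$): it passes to the Riemannian metric $g$ of~\eqref{eq:Riemannianmetric} on $U_x\setminus\{x\}$, sets up the Jacobi fields $Y_{u,v}$ along radial geodesics, and shows by a Taylor estimate at $t=0$ that $g_u(Y_{u,v},\nabla_{\partial/\partial t}Y_{u,v})(t)>0$ for $t\in(0,\beta(C)]$ uniformly over a compact set $C$; the second variation formula then gives strict convexity of $t\mapsto d(x,\sigma(t))$, from which the containment $\sigma\subset B(x,r)$ follows. Your proof sidesteps the auxiliary Riemannian metric and the Jacobi-field machinery entirely: you compute $\frac{d^2}{dt^2}\big(\tfrac12 r^2\circ\sigma\big)$ directly in the exponential chart, using only the geodesic equation, the explicit Euclidean Hessian $\partial_i\partial_j(\tfrac12 F^2(x,\cdot))=g_{ij}(x,\cdot)$, and a size estimate on the Christoffel term. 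The payoff of your approach is economy---no variation of geodesics, no index form---and the use of $r^2$ rather than $r$ cleanly handles the non-smoothness of $r$ at $x$ (your paragraph on the case $\sigma(t)=x$ is a minor wrinkle of this choice). The paper's approach, by contrast, ties the convexity radius more transparently to the Jacobi-field/conjugate-point framework it has already built, and yields convexity of $r$ itself rather than of $r^2$. Both produce the same $\delta(x)$ up to harmless constants involving $\lambda(C)$.
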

	\begin{proof}
		{\rm
			Let $C\subset M$ be a compact set and fix a small number $a>0$. Using the notations as in the last subsection, we consider a $4n-1$ dimensional smooth manifold 
			\[  \Lambda_{C,a}:=\{(u,v,t)\in\Si_x\times\Si_x\times(0,a],\,|\, x\in C,\quad g_u(u,v)=0,\quad t\in(0,a]\}.    \] 
			For an arbitrary fixed point $x\in C$, we shall employ the Riemannian metric $g$ in (\ref{eq:Riemannianmetric}). We then observe that the maps $\Lambda_{C,a}\to TM$:
			\[    (u,v,t)\to Y_{u,v}(t),\quad (u,v,t)\to\nabla_{\frac{\partial}{\partial t}}Y_{u,v}(t)   \]
			are smooth and uniformly bounded on $\Lambda_{C,a}$. We employ here the Riemannian connection $\nabla$ induced through the Riemannian metric $g$ in (\ref{eq:Riemannianmetric}). We then have from the construction of $Y_{u,v}$,
			\[   \frac{d}{dt}g_u(Y_{u,v},\nabla_{\frac{\partial}{\partial t}}Y_{u,v})(t)|_{t=0}=0,     \]    
			and 
			\[  \frac{d^2}{dt^2}g_u(Y_{u,v},\nabla_{\frac{\partial}{\partial t}}Y_{u,v})(t)|_{t=0}=1.  \]
			Then there exists a constant $0<\beta(C)\le 1$ independent of the choice of points on $C$, such that 
			\[  g_u(Y_{u,v},\nabla_{\frac{\partial}{\partial t}}Y_{u,v})(t)>0,\quad (u,v,t)\in\Lambda_{C,\beta(C)}.    \]
			
			\noindent Let $\lambda(C)>1$ be the reversibility constant for $C$. Let $B(x,r)\subset M$ be the forward metric $r$-ball around an arbitrary fixed point $x\in C$. If $y,z\in B(x,r)$, then the triangle inequality implies
			\[     d(y,z),\; d(z,y)<(1+\lambda(C))r.     \]
			Let $\si:[0,d(y,z)]\to (M,F)$ be a minimizing geodesic with $\si(0)=y$, $\si(d(y,z))=z$. We then observe that
			\[     d(x,\si(t))<(\lambda(C)+3)r/2,\quad\text{for all $t\in[0,d(y,z)]$ and for all $x\in C$}.  \]
			Therefore if $r<2\a(C)/(\lambda(C)+3)$, then every pair of points in $B(x,r)$ is joined by a unique minimizing geodesic whose image lies entirely in $B(x,r)$.\par
			We next take an arbitrary pair of points $y,z\in B(x,r)$ with $r\in(0,2\a(C))/(\lambda(C)+3)$. Let $\tau_t:[0,\ell_t]\to(M,F)$ be the unique minimizing geodesic with $\tau_t(0):=x$, $\tau_t(\ell_t):=\si(t)$, $t\in[0,d(y,z)]$. Here we set $\ell_t:=d(x,\si(t))$. The $1$-parameter family of geodesics $\{\tau_t\}_{t\in[0,d(y,z)]}$ form a geodesic variation, 
			along each $\tau_t$, $t\in[0,d(y,z)]$.  Clearly $t\mapsto\ell_t$ is a smooth function and the second variation formula implies that
			\[  \frac{d^2}{d\ell^2}\,\ell_t=g_u(Y_{u,v}(\ell_t),\nabla_{\frac{\partial}{\partial t}}Y_{u,v}(\ell_t))>0,\quad {\rm for \ all} \ (u,v,t)\in\Lambda_{C,r},   \]      
			if $r$ satisfies $r<\beta(C)$. We conclude the proof by setting
			\[    \de(x):=\min \left\{\frac{2\a(C)}{\lambda(C)+3},\;\frac{2\beta(C)}{\lambda(C)+3}\right\}.     \]
		}
	\end{proof}
	\begin{Rem}
		{\rm
			We define the convexity radius function $\de:(M,F)\to\R$ by:
			\[  \de(x):=\sup \{r>0\,|\, \text{every forward ball $B(y,t)$ contained in $B(x,r)$ is convex}\}.    \]  
		}
	\end{Rem}
	A relation between the injectivity radius and the convexity radius was first discussed by  Berger.\par\medskip
	\begin{Prop}[Berger's Remark -- oral communication]
		{\rm
			Let $(M,g)$ be a compact Riemannian manifold. Let $\de:(M,g)\to\R$ and $i:(M,g)\to\R$ be the convexity radius and the injectivity radius functions respectively. If $i(M)$ and $\de(M)$ be the infimum of $i$ and $\de$ over $M$ respectively,  then we have 
			\begin{equation}\label{eq:Berger0}
			\frac{1}{2}i(M)\ge\de(M).
			\end{equation}
		}
	\end{Prop}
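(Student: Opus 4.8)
Here is the plan I would follow.

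\smallskip

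\noindent\emph{Reduction.} It is equivalent to prove $i(M)\ge 2\,\de(M)$, and for this I would prove: for every $\rho<\de(M)$ one has $i(x)\ge 2\rho$ for all $x\in M$; taking the infimum over $x$ and then letting $\rho\uparrow\de(M)$ finishes. So fix $\rho<\de(M)$. By the definition of the convexity radius function in Remark 3.2, $\rho<\de(x)$ for every $x$, and hence \emph{every} metric ball of radius $\le\rho$ is convex. Suppose, for contradiction, that $i(x_0)<2\rho$ for some $x_0$. Since $M$ is compact and $i(\cdot)$ is continuous and positive (Proposition 3.1), $i$ attains its minimum at a point $p$; set $\iota:=i(p)=i(M)\le i(x_0)<2\rho$. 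Since $\de(p)\le i(p)$ always (a convex ball about $p$ is embedded by $\exp_p$), we get $\rho<\de(M)\le\de(p)\le\iota$, so $\rho<\iota<2\rho$ and in particular $\iota/2<\rho<\iota$.

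\smallskip

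\noindent\emph{Klingenberg dichotomy.} Let $q$ be a cut point of $p$ with $d(p,q)=\iota$. By Klingenberg's lemma, either (i) $q$ is conjugate to $p$ along a minimizing geodesic, or (ii) there are exactly two minimizing geodesics $\sigma_1,\sigma_2\colon[0,\iota]\to M$ from $p$ to $q$ with $\dot\sigma_1(\iota)=-\dot\sigma_2(\iota)$.

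\smallskip

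\noindent\emph{Case (ii), the heart of the argument.} Fix $s$ with $\iota/2<s<\rho$ and put $a:=\sigma_2(s)$, $b:=\sigma_1(s)$. Because the two geodesics meet oppositely at $q$, the path that runs from $b$ along $\sigma_1$ to $q$ and then back along $\sigma_2$ to $a$ is a \emph{smooth} geodesic $\eta$, of length $2\iota-2s$. As $s>\iota/2$ we have $\mathrm{length}(\eta)=2\iota-2s<\iota=i(M)\le i(b)$, so $\eta$ is minimizing, and in fact it is the \emph{unique} minimizing geodesic from $b$ to $a$: a second one would force a cut point of $b$ within distance $2\iota-2s<i(b)$, which is impossible. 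However $a,b\in B(p,\rho)$, since $d(p,a)=d(p,b)=s<\rho$, whereas $\eta$ passes through $q$ and $d(p,q)=\iota>\rho$, so the image of $\eta$ is not contained in $B(p,\rho)$. This contradicts the convexity of $B(p,\rho)$ (which holds because $\rho<\de(p)$), settling case (ii).

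\smallskip

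\noindent\emph{Case (i), and the main obstacle.} In the conjugate case there is a priori no second minimizing geodesic to exploit, so I would replace it by the local caustic structure of $\exp_p$ near $q$: since $q$ is a conjugate cut point, $\sigma$ fails to minimize just past $q$, and one produces points $a,b\in B(p,\rho)$ close to $\sigma(s)$ whose unique minimizing geodesic is forced to cross the focal shadow of $q$ and hence to leave $B(p,\rho)$ — equivalently, case (i) is subsumed by the classical estimate $\de(M)\le\tfrac12\,\mathrm{conj}(M)$ for the conjugate radius, obtained by comparing $\bigl(B(m,\rho),g\bigr)$ about the midpoint $m$ of $\sigma$ with a constant–curvature model. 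I expect exactly this conjugate case to be the main difficulty: the ``forced long minimizing geodesic'' must be extracted from the degeneracy of $d\exp_p$ by way of the second variation / Morse index theorem and a careful analysis of the distance function near a conjugate point, rather than being handed over directly as in case (ii).
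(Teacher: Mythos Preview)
Your case (ii) is correct and essentially the paper's argument, arguably cleaner: by choosing $s>\iota/2$ you make the geodesic through $q$ strictly shorter than $\iota$, hence \emph{uniquely} minimizing, which slightly streamlines the contradiction with convexity of $B(p,\rho)$.

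Where you diverge from the paper is in case (i), and here you seriously overestimate the difficulty. No caustic analysis, no Morse index theorem, and no constant-curvature comparison are needed. The paper's trick is simply to move the center of the convex ball to the \emph{midpoint} $m=\gamma(\iota/2)$ rather than keeping it at $p$. Choose $\varepsilon\in\bigl(0,\tfrac{2\delta(M)-\iota}{2}\bigr)$ and set $x'=\gamma(-\varepsilon)$, $y'=\gamma(\iota+\varepsilon)$. Then every point of $\gamma|_{[-\varepsilon,\iota+\varepsilon]}$ lies within $\iota/2+\varepsilon<\delta(M)$ of $m$, so the whole segment, together with its endpoints $x',y'$, sits inside the convex ball $B(m,\delta(M))$. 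But $\gamma|_{[-\varepsilon,\iota+\varepsilon]}$ contains the conjugate pair $p,q$ in its \emph{interior} and is therefore not minimizing; hence it cannot be the unique minimizing geodesic from $x'$ to $y'$ guaranteed by convexity of $B(m,\delta(M))$, and you have two distinct geodesics between $x'$ and $y'$ inside a strongly convex ball --- a contradiction.

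So the ``main obstacle'' you anticipated dissolves once you recenter at the midpoint: the conjugate case is handled by exactly the same mechanism as case (ii) (a geodesic between two points of a convex ball that cannot be the unique minimizer), only the witness geodesic is produced by extending past the conjugate pair instead of by gluing at $q$. Your hint ``comparing $B(m,\rho)$ about the midpoint $m$'' was already pointing at this; you just didn't need the comparison geometry on top of it.
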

	\begin{proof}
		{\rm
			Let $(M,g)$ be a compact Riemannian manifold. Suppose that (\ref{eq:Berger0}) does not hold. We then have $2\de(M)>i(M)$.     
			
			Choose points $x,y\in M$ such that $i(M)=d(x,y)=i(x)=i(y)$. Let $\g:[0,i(M)]\to M$ be a minimizing geodesic with $\g(0)=x$, $\g(i(M))=y$. Then,  Klingenberg's Lemma (see Proposition 4.3) implies that either $y$ is conjugate to $x$ along $\g$, or $\g$ extends to a simple closed geodesic such that $\g(0)=\g(2i(M))=x$. \par
			We first suppose that $\g:[0,2i(M)]\to M$ is a simple closed geodesic. Since $y\notin B(x,\de(M))$ and $x\notin B(y,\de(M))$, the midpoints $\g(i(M)/2)$ and $\g(3i(M)/2)$ between $x$ and $y$ are contained in $B(x,\de(M))$, which is  a contradiction. In fact, convexity of $B(x,\de(M))$ means that 
			$\g[i(M)/2,3i(M)/2]\subset B(x,\de(M))$.\par
			Suppose next that $y$ is conjugate to $x$ along $\g$. Let $\e$ be a number taken such that $\e\in(0,\frac{2\de(M)-i(M)}{2})$, and extend $\g$ to both sides and set $x':=\g(-\e)$, $q':=\g(i(M)+\e)$. Since $x', y'\in B(\g((i(M)/2),\de(M))$ and $\g|_{[-\e,i(M)+\e]}$ is not minimizing, we have a unique minimizing geodesic joining $x'$ to $y'$ whose image lies in $B(\g(i(M)/2),\de(M))$. Consequently, $\g[-\e,i(M)+\e]$ is not contained in $B(\g(i(M)/2),\de(M))$, a contradiction. This completes the proof.        
		}
	\end{proof}
	\begin{Pb}
		{\rm
			Is there any relation between the forward convexity radius and the forward injectivity radius on a compact Finsler manifold ?
		}
	\end{Pb}
	
	\par\medskip
	The non-symmetric property of the distance function on $(M,F)$ makes it difficult to address Problem 3.1. In particular, it is not clear whether we can find a simple closed geodesic $\g:[0,2i(M)]\to(M,F)$ if $i(\g(0))=i((M,F))$ and $q=\g(i(M,F))$ is not conjugate to $p$ along $\g$. This problem is discussed in \S 4.2.
	
	\begin{Ex}
		{\rm
			It should be remarked that Proposition 3.2 is optimal. In fact, for every rank-one symmetric space of compact type, equality holds in (\ref{eq:Berger0}).
			A simple example is given here. Let $T^2:=S^1(a)\times S^1(b)$ with $0<a<b$ be a flat torus whose fundamental domain is  rectangular with edge length $2a\pi$ and $2b\pi$, then $i(T^2)=a\pi$ and $\de(T^2)=\frac{1}{2}a\pi$. 
		}
	\end{Ex}
	\begin{Lem}
		{\rm
			The convexity radius function\index{convexity radius function} $\de:(M,F)\to\R$ is locally Lipschitz. 
		}
	\end{Lem}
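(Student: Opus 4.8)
The plan is to read the Lipschitz estimate straight off the definition, by comparing the families of admissible radii at two nearby points. Write
\[
S_x:=\{\,r>0\,:\,\text{every forward ball }B(w,t)\text{ contained in }B(x,r)\text{ is convex}\,\},
\]
so that $\delta(x)=\sup S_x$. First I would record two elementary facts. (a) $S_x$ is downward closed, hence an interval with left endpoint $0$: if $r\in S_x$ and $0<r''<r$, then $B(x,r'')\subset B(x,r)$, so every forward ball contained in $B(x,r'')$ already lies in $B(x,r)$ and is therefore convex, i.e. $r''\in S_x$. (b) $S_x\ne\emptyset$, so $\delta(x)>0$, by the Whitehead Convexity Theorem~\ref{Th:Whitehead}. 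Point (a) is exactly what lets one approach $\delta(x)$ from below inside $S_x$.

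The decisive step is a ball-inclusion coming from the (one-sided) triangle inequality: for all $x,y\in M$ and all $s>0$, the inequality $d(x,z)\le d(x,y)+d(y,z)$ gives $B(y,s)\subset B(x,\,d(x,y)+s)$. Hence if $r<\delta(x)$ and $0<s\le r-d(x,y)$, then $B(y,s)\subset B(x,r)$, so every forward ball contained in $B(y,s)$ is also contained in $B(x,r)$ and thus convex; therefore $s\in S_y$, i.e. $\delta(y)\ge r-d(x,y)$. (If $r\le d(x,y)$ the bound $\delta(y)\ge r-d(x,y)$ holds trivially since $\delta(y)>0$.) Letting $r\uparrow\delta(x)$ yields $\delta(y)\ge\delta(x)-d(x,y)$; interchanging the roles of $x$ and $y$ yields $\delta(x)\ge\delta(y)-d(y,x)$. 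Combining the two,
\[
|\delta(x)-\delta(y)|\le\max\{\,d(x,y),\,d(y,x)\,\}\qquad\text{for all }x,y\in M .
\]
In particular $\delta$ is $1$-Lipschitz with respect to the symmetrized distance $\rho(x,y):=\max\{d(x,y),d(y,x)\}$.

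To convert this into a Lipschitz bound for $d$ itself, I would fix an arbitrary point of $M$ together with a compact neighbourhood $C$ of it. The reversibility estimate following \eqref{eq:reversibilityconstant} gives $d(y,x)\le\lambda(C)\,d(x,y)$ for $x,y\in C$, and since $\lambda(C)\ge1$ one gets $\max\{d(x,y),d(y,x)\}\le\lambda(C)\,d(x,y)$; hence $|\delta(x)-\delta(y)|\le\lambda(C)\,d(x,y)$ for $x,y\in C$, which is the asserted local Lipschitz continuity. I do not foresee a genuine obstacle: the argument uses only the triangle inequality, the reversibility constant, and the definition of $\delta$, so the only real care needed is bookkeeping --- verifying that $S_x$ is downward closed so that one may take the limit $r\uparrow\delta(x)$, and keeping straight which of the two asymmetric distances $d(x,y)$ or $d(y,x)$ enters each ball inclusion.
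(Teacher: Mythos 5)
Your proof is correct and follows essentially the same route as the paper: the triangle-inequality ball inclusion $B(y,s)\subset B(x,d(x,y)+s)$ giving $\de(y)\ge\de(x)-d(x,y)$ and its counterpart $\de(x)\ge\de(y)-d(y,x)$, then the reversibility constant $\lambda(C)$ to turn the symmetric estimate into a local Lipschitz bound for $d$. Your extra bookkeeping (downward closedness of the admissible radii and letting $r\uparrow\de(x)$) just makes explicit a step the paper glosses over.
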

	\begin{proof}
		{\rm
			Let $\lambda(C)$ be the reversibility constant of a compact set $C\subset M$ as defined in (\ref{eq:reversibilityconstant}). We take two arbitrary  points $x,y\in C$ sufficiently close to each other. We then observe that $B(y,\de(x)-d(x,y))\subset B(x,\de(x))$ implies that $\de(y)\ge\de(x)-d(x,y)$ and similarly,
			$B(x,\de(y)-d(y,x))\subset B(x,\de(x))$ implies $\de(x)\ge\de(y)-d(y,x)$. We conclude the proof by noting that
			\[   \frac{|\de(x)-\de(y)|}{d(x,y)},\;\frac{|\de(x)-\de(y)|}{d(y,x)}\le\lambda(C),\quad \text{for all} \ x,y \in C.   \]
		}\end{proof}
		\par\medskip
		Using this notion of  convexity radius function, Theorem \ref{Th:Whitehead} states that the forward distance function from a point on the manifold to $B(x,r)$ for $r\in(0,\de(x))$ is convex. 
		\par\medskip
		
		\section{The properties of cut locus and conjugate locus}\par
		From now on, for simplicity, we shall only discuss forward geodesics, forward cut locus, forward conjugate locus, etc.  The case where backward aspects are needed is rare. We shall state basic properties of cut locus and conjugate locus which have been discussed in Riemannian geometry by Whitehead, Myers, Klingenberg, Berger, Omori and many others. Rademacher has discussed in \cite{Rademacher} the Finsler version of some results in the proof of the classical sphere theorem. We summarize them here.
		\par\medskip
		\subsection{Foots of closed sets}
		Let $C\subset M$ be a closed set and let $x\in M\setminus C$. A point $y\in C$ is called a {\it foot of $x$ on $C$} if and only if $y$ satisfies $d(x,y)=d(x,C):=\inf\{d(x,y)\,|\,y\in C\}$. Let \[ B^{-1}(C,r):=\{x\in M\,|\, d(x,C)<r \} \ {\rm and} \ S^{-1}(C,r):=\{y\in M\,|\, d(y,C)=r \} \] be the backward metric $r$-ball and the backward metric $r$-sphere around a closed set $C\subset M$ respectively. Let $\overline{B}^{-1}(C,r)$ be the closure of $B^{-1}(C,r)$. If a point $x\in M\setminus C$ has more than one feet, then $x$ belongs to the backward cut locus to $C$.\par
		With this notation the following Lemma (4.1) shows the common property of a foot of a point on a closed set in both Riemannian and Finsler geometry. We show how the triangle inequalities are employed with a non-symmetric distance function.
		\begin{Lem}\label{Lem:foot}
			{\rm
				Let $C\subset M$ be a closed set. Take a point $x\in M \setminus C$ and a positive number $r$ such that $d(x,C)>r$. Let $y\in S^{-1}(C,r)$ be a foot of $x$ on $S^{-1}(C,r)$ and  $\g:[0,a]\to(M,F)$ is a unit speed minimizing geodesic with $\g(0)=x, \g(a)=y$. Then its extension reaches a point  on $C$ at $\g(a+r)$, which is the unique foot of $y$ on $C$. 
			}
		\end{Lem}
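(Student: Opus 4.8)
The plan is to pin down the distance $d(x,C)$ exactly, then produce a foot of $y$ on $C$, and finally use a local minimality argument to identify the minimizing geodesic from $y$ to that foot with the prolongation of $\g$.

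First I would prove $d(x,C)=d(x,y)+r=a+r$. The bound $d(x,C)\le d(x,y)+d(y,C)=a+r$ is immediate from the triangle inequality and $y\in S^{-1}(C,r)$. For the reverse bound, observe that $z\mapsto d(z,C)$ is continuous, equals $d(x,C)>r$ at $x$, and vanishes on $C$; hence any curve from $x$ to a point $c\in C$ meets $S^{-1}(C,r)$ at some point $w$. Cutting the curve at $w$ and using $d(x,w)\ge d\bigl(x,S^{-1}(C,r)\bigr)=d(x,y)=a$ (since $y$ is a foot) together with $d(w,c)\ge d(w,C)=r$, such a curve has length at least $a+r$, so $d(x,C)\ge a+r$.

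Next, forward geodesic completeness gives the forward Hopf--Rinow theorem, so forward-bounded closed sets are compact; a minimizing sequence $c_j\in C$ for $d(y,\cdot)$ therefore subconverges to a foot $c\in C$ of $y$ on $C$ with $d(y,c)=r$. Let $\tau:[0,r]\to(M,F)$ be a unit-speed minimizing geodesic from $y$ to $c$. Concatenating $\g|_{[0,a]}$ with $\tau$ yields a curve from $x$ to $c$ of length $a+r$; since $d(x,c)\ge d(x,C)=a+r$ by the previous step, this concatenation is a minimizing curve, hence so is each of its sub-arcs. A minimizing curve that is a concatenation of two geodesic arcs cannot have a genuine corner at the junction: choosing $p$ on $\g$ and $q$ on $\tau$ close enough to $y$ that both lie, together with the sub-arc joining them, inside a single forward-convex ball supplied by the Whitehead Convexity Theorem~\ref{Th:Whitehead} (the reversibility constant guarantees small forward and backward distances are comparable), the minimizing sub-arc from $p$ to $q$ must be the unique minimizing geodesic between them, which is smooth; thus $\dot\g(a)=\dot\tau(0)$, so $\tau$ prolongs $\g$. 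Consequently $\g(a+r)=\tau(r)=c\in C$ and $d(y,\g(a+r))=r=d(y,C)$, i.e.\ $\g(a+r)$ is a foot of $y$ on $C$. For uniqueness, the same argument applied to any foot $c'$ of $y$ on $C$ shows that a minimizing geodesic from $y$ to $c'$ also prolongs $\g$; since a geodesic is determined by its initial point and velocity, there is exactly one such prolongation, forcing $c'=\g(a+r)$.

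I expect the ``no corner'' step to be the main obstacle. In the asymmetric Finsler setting one cannot simply invoke a Riemannian first-variation computation; instead it rests on the strong convexity of $F$ through the local uniqueness of minimizers (the Whitehead theorem), and one must be careful throughout to use only the valid one-sided triangle inequalities, since $d$ is not symmetric.
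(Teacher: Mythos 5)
Your proof is correct and follows essentially the same route as the paper: you establish $d(x,C)=a+r$ by intersecting paths from $x$ to $C$ with $S^{-1}(C,r)$ and using the foot property of $y$, conclude that the concatenation of $\g$ with a minimizing geodesic from $y$ to a foot on $C$ is minimizing, remove the corner at $y$ by uniqueness of minimizing geodesics in a Whitehead convex ball, and deduce uniqueness of the foot from uniqueness of the geodesic prolongation. The only (harmless) deviations are that you cut arbitrary curves rather than the minimizing geodesic from $x$ to its foot on $C$, and that you spell out the Hopf--Rinow compactness argument for the existence of the foot of $y$, which the paper leaves implicit.
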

		\begin{proof}
			{\rm Let $z_1,z\in C$ be feet of $x,y$ on $C$ respectively. Let $\tau:[0,r]\to (M,F)$, $\si:[0,b]\to(M,F)$ be  unit speed minimizing geodesics such that  $\tau(0)=y,\si(0)=x$ and  $\tau(r)=z,\si(b)=z_1$. Here we set $b:=d(x,z_1)=d(x,C)$. Clearly, $\si[0,b]$ intersects $ S^{-1}(C,r)$ at a point, say, $y_1:=\si[0,b]\cap S^{-1}(C,r)$. We then observe that $d(x,y_1)\geq d(x, S^{-1}(C,r))=d(x,y)$ and $d(y_1,z_1)\geq d(y,C)=r$. The triangle inequality then implies $ d(x,C)=d(x,y_1)+d(y_1,z_1)\ge d(x,S^{-1}(C,r))+r= a+r$ and
				$d(x,C)\le d(x,y)+d(y,z)= a+r.$ We therefore have $d(x,y)+d(y,z)=d(x,z)=a+r$ and $d(y_1,z_1)=r.$ We then assert that $x,y$ and $z$ belong to a minimizing geodesic. In fact, let $B(y,\delta(y))$ be a convex ball centered at $y$ and take arbitrary points $x'\in\g(a-\delta(y),a)\cap B(y,\delta(y))$ and $z'\in \tau(0,r)$.	The triangle inequality again implies that $d(x',z')=d(x',y)+d(y,z'),$ and hence the uniqueness of minimizing geodesic joining two points in $B(y,\delta(y))$ implies that $y$ is an interior point of the minimizing geodesic joining $x'$ to $z'$. Therefore $\g(a+r)=\tau(r)=z$. In particular, $z$ and $z_1$ are unique feet of $y$ and $y_1$ of $C$ respectively (see Figure \ 1). }
			\end{proof}
			
			\begin{tikzpicture}[scale=1]
			\draw (-4,0) parabola bend (0,1) (4,0) ;
			\draw (-4.5,2) parabola bend (0,3) (4.5,2) ;
			\draw (0,3) circle (1.5cm);
			\draw[thick][decoration={markings, mark=at position 0.625 with {\arrow{<}}},postaction={decorate}] (0,3) arc (0:60:3cm)node[above] {$x$};
			\draw [thick][decoration={markings, mark=at position 0.825 with {\arrow{>}}},postaction={decorate}](0,3) arc (40:-2:3cm) node[below] {$z=\tau(r)$};
			\draw[dashed][decoration={markings, mark=at position 0.625 with {\arrow{>}}},postaction={decorate}] (-.14,4) arc (50:-11:2.2cm);
			\draw [thick][decoration={markings, mark=at position 0.525 with {\arrow{>}}},postaction={decorate}] (-1.5,5.6) arc (120:215:3.5cm)node[below] {$z_1=\sigma(b)$};
			\node at (-0.2,2.8){$y$};\node at (2,.4){$C$}; \node at (-5.3,2){$S^{-1}(C,r)$}; \node at (-3.4,2.6){$y_1$};\node at (-.4,4){$x'$}; \node at (-.4,5){$\gamma$}; \node at (.9,1.3){$\tau$};\node at (.4,1.8){$z'$};\node at (-3.4,3.3){$\sigma$};\node at (1.2,4.5){$B(y,\delta y)$};\node [below][below=0.4cm] {Figure \ 1};
			\end{tikzpicture}
			
			\medskip
			Let $C\subset(M,F)$ be a closed, connected and locally convex set. We then find an open set $U(C)$ of $M$ and a strong deformation retract $C$ of $U(C)$. In fact, we choose a countable open cover $\{U_i\}_{i=1,2,\dots}$ of $C$ such that for each $i=1,2,\dots$, the closure $\overline{U_i}$ of $U_i$ is compact. Let $\de_i:=\de(\overline{U_i})$ be the convexity radius of $\overline{U_i}$ and set
			\[   U(C):=\bigcup_{i=1}^{\infty}\, B^{-1}(\overline{U_i},\de_i). \]
			If $x\in U(C)$, then we get a number $i$ and a unique foot $f(x)$ of $x$ on $U_i\cap C$. Suppose there is another foot $f'(x)$ of $x$ on $U_i\cap C$. Clearly we have 
			\[  f(x), f'(x)\in B(x,\de_i)\cap C.    \]
			There is a unique minimizing geodesic $T(f(x),f'(x))$ joining $f(x)$ to $f'(x)$ and belonging to $B(x,\de_i)$. The convexity of the distance function from $x$ to $T(f(x),f'(x))$ implies that its minimum is attained at an interior point of $T(f(x),f'(x))$. This is a contradiction to the fact that $T(f(x),f'(x))\subset C$. The above discussion may be summarized as follows:
			\par\medskip
			\begin{Prop}
				{\rm
					A closed locally convex set $C\subset(M,F)$ admits an open set $U(C)$ of  $M$ such that $C$ is a strong deformation retract of $U(C)$. The retraction is given by the foot map $f:U(C)\to C$.
				}
			\end{Prop}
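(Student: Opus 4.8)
The plan is to exhibit $U(C)$ as a thin ``backward tube'' about $C$ and to retract it onto $C$ by sliding each point, at constant speed, along the unique minimizing geodesic running to its nearest point of $C$. I would organize this in three stages: (a) construct $U(C)$ together with the foot map $f\colon U(C)\to C$; (b) prove $f$ is single-valued and continuous; (c) check that the normal geodesic homotopy $H$ is continuous, fixes $C$ pointwise, stays inside $U(C)$, and terminates at $f$.

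For the construction and the foot map: since $C$ is closed and locally convex, each $y\in C$ has a convex ball $B(y,\de(y))$ with $C\cap B(y,\de(y))$ convex. Using paracompactness of $M$, choose a countable, locally finite open cover $\{U_i\}$ of $C$ with compact closures and with each $\overline{U_i}\cap C$ convex, and set $\de_i:=\de(\overline{U_i})$, positive and finite by Lemma 3.1 and compactness; after shrinking the $\de_i$ (so that $\de$ stays bounded below on a $\de_i$-neighborhood of $\overline{U_i}\cap C$, again via Lemma 3.1) I may assume that whenever $d(z,\overline{U_i}\cap C)<\de_i$ the ball $B(z,3\de_i)$ is convex and meets $C$ in a convex set. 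Put $U(C):=\bigcup_i B^{-1}(\overline{U_i}\cap C,\de_i)$, an open neighborhood of $C$, and for $x\in U(C)$ let $f(x)$ be a point of $C$ at which $d(x,C)$ is attained. Uniqueness of $f(x)$ is the crux: two distinct feet of $x$ lie within $2\de_i$ of either of them, hence in a common convex ball in which $C$ is convex, so they are joined there by a minimizing geodesic $\g$ which by local convexity lies in $C$; since $t\mapsto d(x,\g(t))$ is \emph{strictly} convex along $\g$—precisely the second-variation computation $\frac{d^{2}}{d\ell^{2}}\ell_t=g_u(Y_{u,v},\nabla_{\pa/\pa t}Y_{u,v})>0$ from the proof of Theorem \ref{Th:Whitehead}—it would attain a value strictly below $d(x,C)$ at an interior point of $\g\subset C$, which is absurd. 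Thus $f$ is well defined and independent of $i$ (it is the unique nearest point of all of $C$), and continuity of $f$ follows by the standard argument: if $x_k\to x$ the $f(x_k)$ lie in a fixed compact part of $C$, any subsequential limit is a foot of $x$, hence equals $f(x)$.

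For the retraction: define $H\colon U(C)\times[0,1]\to M$ by letting $H(x,s)$ be the point at forward arclength $s\,d(x,f(x))$ on the unit-speed minimizing geodesic from $x$ to $f(x)$. Then $H(\cdot,0)=\mathrm{id}$, $H(x,1)=f(x)\in C$, and $H(y,s)=y$ for $y\in C$ (since $f(y)=y$), and $H$ is continuous by continuity of $f$ together with the smooth dependence of short geodesics on their endpoints ($(\Pi,\exp)$ is a diffeomorphism near the diagonal). The geometric point is that $H$ never leaves $U(C)$: if $f(x)\in\overline{U_i}\cap C$ and $z=H(x,s)$ lies on the minimizing geodesic from $x$ to $f(x)$, then $d(z,\overline{U_i}\cap C)\le d(z,f(x))=d(x,f(x))-d(x,z)<\de_i$, so $z$ remains in the $i$-th tube, and the same triangle inequality gives $f(z)=f(x)$; hence $H$ is a strong deformation retraction of $U(C)$ onto $C$ and $H(\cdot,1)=f$ is the asserted retraction.

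The main obstacle is the coupled control of $f$ and of the tube $U(C)$: well-definedness and continuity of the foot map, and the invariance $H(U(C)\times[0,1])\subseteq U(C)$, both rest on the distance-to-a-convex-subset function being strictly convex on small convex balls (supplied by Theorem \ref{Th:Whitehead}) and on handling the asymmetry of $F$ correctly—$U(C)$ must be assembled from \emph{backward} balls $B^{-1}$, and the triangle inequalities must be applied in the order used in Lemma \ref{Lem:foot}. Once the radii $\de_i$ are taken small enough, which is possible because the injectivity and convexity radii are bounded below on compacta (Proposition 3.1, Lemma 3.1), the classical Riemannian estimates carry over verbatim to the Finsler setting.
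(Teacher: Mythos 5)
Your proposal follows essentially the same route as the paper: a tube $U(C)=\bigcup_i B^{-1}(\overline{U_i},\de_i)$ built from a countable cover of $C$ by relatively compact sets, uniqueness of the foot obtained by joining two putative feet inside a small convex ball by a minimizing geodesic lying in $C$ and using the (strict) convexity of the distance function from Theorem 3.1 to get a contradiction, and the foot map as the retraction. The additional material you supply (continuity of $f$, the explicit geodesic homotopy, and the tube-invariance check) correctly fills in details the paper leaves implicit, so your argument is a faithful, somewhat more detailed version of the paper's proof.
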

			\begin{Rem} 
				{\rm
					If the local convexity of a closed set $C\subset(M,F)$ is not assumed, then every open set $V\supset C$ may admit a point $x\in V$ with more than one foot on $C$. The backward cut locus $\text{Cut}^{-1}(A)\subset(M,F)$ of a closed set $A\subset(M,F)$ is defined as follows:
					\[
					\text{Cut}^{-1}(A):=\left\{ 
					x\in (M,F) \Bigg\vert \begin{split} 
					& \text{ if} \  f(x) \ \text{ is a foot of} \ x \ \text{on} \ A, \text{then}\ T(x,f(x)) \ \text{is not} \\ & \text{properly  contained in any}  \ T(y,f(y)),\  y\in (M,F) \setminus A, \\ &  \text{where}\  f(y) \ \text{is a foot of} \ y  \ \text{on}\ A. 
					\end{split} 		\right\}
					\]
					
					\noindent This means that $x$ belongs to the backward cut locus to $C$, and hence the foot map $f:V\to C$ may fail to be a retraction map of $C$. Let $\text{Cut}^{-1}(C)\subset(M,F)$ be the backward cut locus to $C$. We then have $d(\text{Cut}^{-1}(C),C)\ge 0$, and equality holds if and only if there is a sequence $\{x_j\}_{j=1,2,\dots}\subset \text{Cut}^{-1}(C)$ such that $\lim_{j\to\infty}\;d(x_j,C)=0$. Let $C\subset(M,F)$ be a compact set whose boundary $\partial C$ is a smooth hypersurface of $(M,F)$. Here we do not assume $C$ to be  locally convex. From the smoothness assumption on $\partial C$, we deduce that $d(\text{Cut}^{-1}(C),C)>0$. Let $U(C)\supset C$ be an open as defined above. Then $C$ has a strong deformation retract and its retraction map is the foot map $f:U(C)\to C$. 
				}
			\end{Rem}
			\par\medskip
			\subsection{The Klingenberg Lemma}\index{Klingenberg Lemma}\index{Lemma!Klingenberg}
			We now summarize important facts on the properties of cut loci on complete Riemannian and Finsler manifolds \cite{IINS}.
			\begin{Prop}\label{Prop:K}
				{\rm
					Let $(M,F)$ be a complete Finsler manifold and $x\in M$ an arbitrary fixed point such that $C(x)\neq\emptyset$. If $y\in C(x)$ and if $\g:[0,d(x,y)]\to(M,F)$ is a minimizing geodesic with $\g(0)=x$, and $\g(d(x,y))=y$, then one of the following holds:
					\begin{enumerate}
						\item $y$ is conjugate to $x$ along $\g$, or 
						\item there exists another minimizing geodesic $\si:[0,d(x,y)]\to(M,F)$ such that $\si(0)=x$ and $\si(d(x,y))=y$.
					\end{enumerate}
				}
			\end{Prop}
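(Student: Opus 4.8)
\medskip\noindent\textbf{Proof proposal.} The plan is to fix the minimizing geodesic $\g$, set $\ell:=d(x,y)$ and $u:=\dot\g(0)\in\Si_x$, assume that $y$ is \emph{not} conjugate to $x$ along $\g$, and produce a second minimizing geodesic from $x$ to $y$. First I would record the standard observation that, because $y\in C(x)$ and $\g$ is minimizing, $\g$ does not extend to a minimizing geodesic past $y$, so $\ell=i_x(u)$. Then, using forward geodesic completeness to prolong $\g$, the segment $\g|_{[0,\ell+1/j]}$ fails to be minimizing for every $j$, so $\ell_j:=d\bigl(x,\g(\ell+\tfrac1j)\bigr)<\ell+\tfrac1j$; I would pick a unit-speed minimizing geodesic $\si_j:[0,\ell_j]\to(M,F)$ from $x$ to $\g(\ell+\tfrac1j)$ and put $u_j:=\dot\si_j(0)\in\Si_x$, so that $\exp_x(\ell_j u_j)=\g(\ell+\tfrac1j)$.

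Next I would extract a limit. The triangle inequality gives $\ell_j\le\ell+\tfrac1j$, and confining the whole picture to a compact set $C$ containing $\g([0,\ell+1])$ and using its reversibility constant $\lambda(C)$ gives $d\bigl(\g(\ell+\tfrac1j),\g(\ell)\bigr)\le \lambda(C)/j$, hence $\ell=d(x,y)\le\ell_j+\lambda(C)/j$; together these force $\ell_j\to\ell$. Since the indicatrix $\Si_x$ is compact, after passing to a subsequence $u_j\to u_\infty\in\Si_x$, and by continuity of $\exp_x$ we get $\exp_x(\ell u_\infty)=\lim_j\exp_x(\ell_j u_j)=\lim_j\g(\ell+\tfrac1j)=y$. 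Thus $\si_\infty(t):=\exp_x(tu_\infty)$, $t\in[0,\ell]$, is a geodesic from $x$ to $y$ of length $\ell=d(x,y)$, i.e.\ a minimizing geodesic.

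The conclusion then splits on whether $u_\infty=u$. If $u_\infty\ne u$, then $\si_\infty$ is a minimizing geodesic from $x$ to $y$ distinct from $\g$, which is alternative (2). If $u_\infty=u$, I would derive a contradiction with the assumed non-conjugacy: non-conjugacy means $d(\exp_x)_{\ell u}$ is non-singular, so $\exp_x$ restricts to a diffeomorphism of a neighbourhood $W$ of $\ell u$ onto a neighbourhood of $y$; for $j$ large, both $(\ell+\tfrac1j)u$ and $\ell_j u_j$ lie in $W$ and are carried by $\exp_x$ to the single point $\g(\ell+\tfrac1j)$. These two vectors are distinct: if $u_j=u$ then $\g(\ell_j)=\g(\ell+\tfrac1j)$, forcing $\ell_j=\ell+\tfrac1j$ by injectivity of $\exp_x|_W$ and contradicting $\ell_j<\ell+\tfrac1j$; and if $u_j\ne u$ then $\ell_j u_j$ and $(\ell+\tfrac1j)u$ are positive multiples of distinct unit vectors, hence unequal. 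This contradicts injectivity of $\exp_x|_W$, so $u_\infty=u$ cannot occur, and alternatives (1) and (2) exhaust the possibilities.

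I expect the main obstacle to be the bookkeeping forced by the asymmetry of $d$: the estimate $\ell_j\to\ell$ must be run through a fixed compact set and its reversibility constant rather than by a naive symmetric triangle inequality, and one must take care, in the case $u_\infty=u$, to verify that the approximating directions satisfy $u_j\ne u$ so that the two $\exp_x$-preimages produced above are genuinely different --- it is precisely here that the non-conjugacy hypothesis, via local injectivity of $\exp_x$, is used.
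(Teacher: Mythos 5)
Your proposal is correct and follows essentially the same route as the paper's own proof: assume $y$ is not conjugate to $x$ along $\g$, take minimizing geodesics from $x$ to the points $\g(\ell+\tfrac1j)$ just beyond $y$, extract a limiting minimizing geodesic, and use the local injectivity of $\exp_x$ near $\ell\dot\g(0)$ (guaranteed by non-conjugacy) to conclude the limit direction differs from $\dot\g(0)$. Your write-up is in fact somewhat more careful than the paper's, notably in the explicit $\lambda(C)$-estimate giving $\ell_j\to\ell$ and in the case analysis ruling out $u_\infty=u$.
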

			
			\begin{Prop}\label{K1}
				{\rm
					Let $(M,g)$ be a compact Riemannian  manifold and $x\in M$, $y\in C(x)\subset(M,g)$ satisfy  (2) in Proposition \ref{Prop:K}. If
					\begin{equation}\label{eq:cutpoint}
					d(x,y)=i(x)=d(x,C(x)),
					\end{equation}
					then there are exactly two distinct minimizing geodesics $\g, \si:[0,i(x)]\to(M,g)$ such that $\g(0)=\si(0)=x$, $\g(i(x))=\si(i(x))=y$, and $\dot\g(i(x))=-\dot\si(i(x))$.
				}
			\end{Prop}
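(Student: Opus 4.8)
The plan is to carry out Klingenberg's classical argument in the notation of the paper. Set $c:=i(x)=d(x,y)=d(x,C(x))$. I read the hypothesis ``$y$ satisfies alternative (2) of Proposition~\ref{Prop:K}'' as the statement that $y$ is \emph{not} conjugate to $x$ along any minimizing geodesic from $x$ to $y$; applying Proposition~\ref{Prop:K} to one such geodesic then yields a second one, so the set $N\subset\Si_x$ of initial unit vectors of the minimizing geodesics from $x$ to $y$ has at least two elements. For $u\in N$ write $v(u):=\dot\g_u(c)\in\Si_y$ for the terminal velocity of $\g_u$; note that both $u\mapsto\g_u|_{[0,c]}$ and $u\mapsto v(u)$ are injective on $N$, since a geodesic is determined by its velocity at any one of its points, and that $N$ is compact.

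The crux is the claim that \emph{for any two distinct $u_1,u_2\in N$ one has $v(u_2)=-v(u_1)$}, which I would prove by contradiction. Suppose $v(u_1)\neq-v(u_2)$; then (as $v(u_1)\neq v(u_2)$ also) the vector $w:=-(v(u_1)+v(u_2))$ is nonzero and, both $v(u_i)$ being unit, $g(v(u_i),w)=-\bigl(1+g(v(u_1),v(u_2))\bigr)<0$ for $i=1,2$. Put $q_s:=\exp_y(sw)$ for small $s>0$. Since $y$ is not conjugate to $x$ along $\g_{u_i}$, the map $\exp_x$ is a local diffeomorphism near $cu_i\in T_xM$, so there is a geodesic $\g_{u_i}^s$ from $x$ to $q_s$ depending smoothly on $s$ with $\g_{u_i}^0=\g_{u_i}$, and the first variation of arc length gives $L(\g_{u_i}^s)=c+s\,g(v(u_i),w)+o(s)<c$ for $s$ small. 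For small $s$ the geodesics $\g_{u_1}^s$ and $\g_{u_2}^s$ are distinct, their initial velocities converging to $u_1\neq u_2$. Hence $d(x,q_s)\le\min_i L(\g_{u_i}^s)<c=d(x,C(x))$, so $q_s\notin C(x)$ and the minimizing geodesic from $x$ to $q_s$ is unique; therefore at least one of $\g_{u_1}^s,\g_{u_2}^s$, say $\g_{u_j}^s$, is not minimizing, i.e.\ $d(x,q_s)<L(\g_{u_j}^s)<c$. Reparametrizing $\g_{u_j}^s$ by arc length, this says the unit-speed geodesic issuing from $x$ in the direction of $\dot\g_{u_j}^s(0)$ — with unit vector $e\in\Si_x$, say — fails to be minimizing up to the parameter $L(\g_{u_j}^s)$, so $i_x(e)<L(\g_{u_j}^s)<c$, contradicting $i(x)=\inf_{\Si_x}i_x=c$. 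This establishes the claim.

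It then remains to feed the claim back on itself. If $N$ had three distinct elements $u_1,u_2,u_3$, the claim would give $v(u_1)=-v(u_2)$ and $v(u_1)=-v(u_3)$, hence $v(u_2)=v(u_3)$, which with $v(u_2)=-v(u_3)$ forces $v(u_3)=0$, impossible since $v(u_3)\in\Si_y$. Thus $N$ has exactly two elements; denoting the corresponding minimizing geodesics by $\g$ and $\si$, these are the only two minimizing geodesics from $x$ to $y$, and the claim yields $\dot\si(i(x))=-\dot\g(i(x))$.

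The part I expect to need the most care is the first-variation step and the way it produces the contradiction: one needs the variational geodesic $\g_{u_i}^s$ to exist (this is precisely where the non-conjugacy of $y$ to $x$ along $\g_{u_i}$ enters) and to have length strictly decreasing to first order in the direction $w$, and then one must notice that a geodesic from $x$ of length $<c$ that is not minimizing violates the defining property $i(x)=\inf_{\Si_x}i_x$. Everything else is bookkeeping; the only genuinely geometric — but elementary — input is that two distinct, non-antipodal unit vectors of $T_yM$ admit a common direction $w$ making an obtuse angle with both, which is what makes the choice $w=-(v(u_1)+v(u_2))$ work.
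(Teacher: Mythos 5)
Your proof is correct, and it is the classical Klingenberg argument: non-conjugacy lets you invert $\exp_x$ locally near $i(x)u_i$, the first variation in a direction $w$ making an obtuse angle with both terminal velocities produces two distinct geodesics from $x$ to $\exp_y(sw)$ of length $<i(x)$, and since a point at distance $<d(x,C(x))$ from $x$ admits a unique minimizing geodesic, at least one of them is non-minimizing, forcing $i_x(e)<i(x)$ and contradicting $i(x)=\inf_{\Si_x}i_x$. The paper itself states this proposition without proof (only Proposition 4.2 and the Berger--Omori lemma are proved there), so there is no in-text argument to compare with; your write-up supplies the omitted standard proof in the sense of Klingenberg and Cheeger--Ebin, and the pairwise application of your claim to rule out a third minimizing geodesic is a clean way to get ``exactly two.'' One point worth keeping explicit: your reading of the hypothesis---that $y$ is not conjugate to $x$ along any minimizing geodesic---is the intended one and is genuinely needed, both to justify the local inversion of $\exp_x$ along each of $\g_{u_1},\g_{u_2}$ and for the conclusion itself (at the antipode on the round sphere alternative (2) of Proposition 4.2 holds literally, yet there are infinitely many minimizing geodesics and their terminal velocities are not opposite), so the statement is only true under that non-conjugacy reading.
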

			\begin{Rem}
				{\rm
					It turns out that Klingenberg's result in  Proposition 4.3 that $\dot\g(i(x))=-\dot\si(i(x))$ does not hold for a Finsler manifold, in general. This phenomenon shows an essential difference between Riemannian and  Finsler geometry.
				}
			\end{Rem}
			\begin{proof}[Proof of Proposition \ref{Prop:K}]
				{\rm
					Suppose that (1) does not hold. Thus $y\in C(x)$ is not conjugate to $x$ along $\g$, and hence there is a small open set $O(y)\subset T_xM$ around $y$ such that $\exp_x|_{O(y)}:O(y)\to(M,F)$ is an embedding. Let $\{\e_j\}_{j=1,2,\dots}$ be a decreasing sequence of positive numbers converging to $0$ and $q_j:=\g(d(x,y)+\e_j)$. For each $j$, let $\tau:[0,a_j]\to(M,F)$ be a minimizing geodesic such that $\tau_j(0)=x, \tau_j(a_j)=q_j$ for $j=1,2,\dots$. Clearly $d(x,q_j)=a_j$ and $\lim_{j\to\infty}a_j=d(x,y)$. Then $\g[0,d(x,y)+\e]$ is not minimizing for all $\e>0$ and hence $\dot\g(0)\neq\dot\tau_j(0)$ for all $j$. Choosing a subsequence, we find a limit of $\{\dot\tau_j(0)\}_j$ and the limit geodesic $\tau:[0,d(x,y)]\to(M,F)$ such that $\tau(0)=x$, $\tau(d(x,y))=\g(d(x,y))$ satisfying $d(x,y)\dot\tau(0)\notin O(y)$.
					This proves that (2) holds.  }	\end{proof}
					Lemma \ref{Lem:BO} below describes certain conditions under which (1) holds. 
					
					The following lemma due to Berger~\cite{B1}  is extended to Finsler manifolds. The proof employs the Riemannian metric in the geodesic polar coordinates as defined in  \S 3.2, and the first variation formula, and is omitted here.
					\begin{Lem}\label{Lem:B1}
						{\rm
							Let $x,y\in (M,F)$. Let $V\subset M$ be an open set around $y$ such that the distance function $d(x,.):V\to \R$ from $x$ attains a  local maximum at $y$. Then there exists for every vector $u\in T_yM,\; u\neq 0$ a minimizing geodesic $\si:[0,d(y,x)]\to(M,F)$ such that
							$\si(0)=x,\,\si(d(y,x))=y$ and 
							\begin{equation}\label{eq:B1}
							g(u,-\dot\si(d(x,y)))\ge 0.
							\end{equation}      
						}
					\end{Lem}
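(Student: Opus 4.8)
The plan is to derive \eqref{eq:B1} from the first variation formula for the distance function $D:=d(x,\cdot)$, used in contrapositive form. Since $D(x)=0$ while $D(z)>0$ for $z\neq x$, a local maximum point $y$ of $D$ satisfies $y\neq x$, so $\ell:=d(x,y)>0$; and by forward completeness together with the Hopf--Rinow theorem the set $\mathcal G$ of unit-speed minimizing geodesics $\si:[0,\ell]\to(M,F)$ with $\si(0)=x$ and $\si(\ell)=y$ is non-empty. Along each such $\si$ we use the Riemannian metric $g_{\dot\si}$ of the geodesic polar coordinates of \S 3.2, for which $\si$ is a geodesic; since $g_{\dot\si(\ell)}$ is a symmetric bilinear inner product, the quantity in \eqref{eq:B1} is $g(u,-\dot\si(\ell))=-g_{\dot\si(\ell)}(\dot\si(\ell),u)$, so the assertion to be proved is that for every $u\in T_yM\setminus\{0\}$ there is a $\si\in\mathcal G$ with $g_{\dot\si(\ell)}(\dot\si(\ell),u)\le 0$.

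First I would record a compactness fact about $\mathcal G$. The map $\si\mapsto\dot\si(0)$ embeds $\mathcal G$ into the compact indicatrix $\Si_x$ with closed image, because any $C^\infty$-limit of unit-speed geodesics in $\mathcal G$ is again a unit-speed geodesic issuing from $x$, defined on $[0,\ell]$, ending at $y$, hence of length $\ell=d(x,y)$, hence minimizing. So $S:=\{\dot\si(\ell)\mid\si\in\mathcal G\}$ is a compact subset of $TM\setminus\{0\}$, and $c_0:=\min_{\si\in\mathcal G}g_{\dot\si(\ell)}(\dot\si(\ell),u)$ exists. Moreover, writing $y_s:=\exp_y(su)$ and letting $\rho_s$ be any unit-speed minimizing geodesic from $x$ to $y_s$ of length $\ell_s:=d(x,y_s)$, we have $\ell_s\to\ell$ as $s\to 0^+$, and the same limiting argument shows that every $C^\infty$-subsequential limit of the $\rho_s$ lies in $\mathcal G$; in particular the endpoint velocities $\dot\rho_s(\ell_s)$ can accumulate only on $S$ as $s\to0^+$.

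Now suppose, for contradiction, that \eqref{eq:B1} fails for some $u\neq 0$, i.e.\ that $c_0>0$. For small $s>0$ consider a smooth variation of the geodesic $\rho_s$ which keeps the initial point fixed at $x$, agrees with $\rho_s$ outside a short terminal arc, and slides the endpoint back to $y$ along $r\mapsto\exp_y((s-r)u)$, $r\in[0,s]$; its terminal curve joins $x$ to $y$, hence has length $\ge d(x,y)=\ell$. On the other hand, the first variation formula applied along the geodesic $\rho_s$, together with $\dot\rho_s(\ell_s)\to S$, $d(\exp_y)_{su}(u)\to u$ and continuity of $v\mapsto g_v(v,u)$ on $TM\setminus\{0\}$, shows that this terminal curve has length $\ell_s+s\,g_{\dot\rho_s(\ell_s)}\bigl(\dot\rho_s(\ell_s),-d(\exp_y)_{su}(u)\bigr)+o(s)\le\ell_s-\tfrac{c_0}{2}\,s$ for all small $s>0$; here the $o(s)$ absorbs the non-geodesic correction term of the variation, which is of order $O(s^2)$ because the variation field has amplitude $O(s)$ along an arc of bounded length. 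Since $y$ is a local maximum of $D$ and $y_s\in V$ for $s$ small, $\ell_s=d(x,y_s)\le d(x,y)=\ell$; hence the terminal curve has length $\le\ell-\tfrac{c_0}{2}\,s<\ell=d(x,y)$, which is absurd. Therefore \eqref{eq:B1} holds for every $u\neq 0$.

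I expect the estimate in the last paragraph to be the main obstacle; it is really the ``hard direction'' of the first variation of the distance function. Two points require care: first, the endpoint-retracting variation of $\rho_s$ is not a variation through geodesics, so one must verify that its correction term is $o(s)$, which works precisely because its variation field has amplitude $O(s)$ over an arc of bounded length; second, one must know that the endpoint velocities $\dot\rho_s(\ell_s)$ cannot escape every neighbourhood of $S$ as $s\to 0^+$, which rests on compactness of $\Si_x$ together with the standard fact that $C^\infty$-limits of minimizing geodesics are minimizing. The remaining ingredients---the first variation formula, continuity of the distance, and the bilinearity and symmetry of $g_v$---are all available from \S 2 and \S 3.
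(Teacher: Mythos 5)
Your argument is correct and follows essentially the route the paper indicates for this lemma (whose detailed proof it omits): the osculating Riemannian metrics $g_{\dot\sigma}$ coming from the geodesic polar coordinates of \S 3.2 together with the first variation formula, applied to minimizing geodesics from $x$ to $\exp_y(su)$ and an endpoint-retracting variation, with compactness of the set of minimizing geodesics from $x$ to $y$ supplying the uniform bound $c_0>0$ and the local-maximum hypothesis giving $\ell_s\le\ell$. The only delicate point is the one you yourself flag, the uniformity of the $o(s)$ remainder in the first variation estimate as $s\to 0^+$, and your justification (uniformly bounded variation fields over a fixed compact region) is the standard and adequate one.
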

					\par\medskip
					\subsection{The Berger-Omori Lemma}\par
					The following important lemma was first proved  by Berger \index{Berger--Omori Lemma} \index{Lemma!Berger--Omori} for even-dimensional compact Riemannian manifolds of positive sectional curvature with minimal diameter. Then Omori \cite{Omori} proved it for any compact Riemannian manifold with minimal diameter. It is summarized in \cite{Besse}. We prove  the Riemannian version under certain weaker assumptions. The Berger-Omori Lemma can be extended to Finsler manifolds, and has been carried out in \cite{IINS}. \par
					For any vector $u\in TM$  we denote by $\Vert u\Vert$ the Riemannian norm of $u$ .\par
					\begin{Lem}\label{Lem:BO}
						{\rm
							Let $(M,g)$ be a complete Riemannian manifold and $C(x)\neq\emptyset$ for a point $x\in M$. If $y\in C(x)$ satisfies $d(x,y)=i(x)$ and if there exist two distinct minimizing geodesics $\g_0, \g_1:[0,i(x)]\to M$ such that $\g_0(0)=\g_1(0)=x$ and $\g_0(i(x))=\g_1(i(x))=y$ and such that $\dot\g_0(i(x))$ and $\dot\g_1(i(x))$ are linearly independent, then there exists a one-parameter family of minimizing geodesics $\g_t:[0,i(x)]\to M$, $\g_t(0)=x$, $\g_t(i(x))=y$, $0\le t\le 1$ such that
							\begin{equation}\label{eq:luna}
							\dot\g_t(i(x))=\frac{(1-t)\dot\g_0(i(x))+t\dot\g_1(i(x))}{\Vert(1-t)\dot\g_0(i(x))+t\dot\g_1(i(x))\Vert},\quad 0\le t\le 1.
							\end{equation}
							
						}
					\end{Lem}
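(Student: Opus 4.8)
The plan is to recast the conclusion as a connectedness statement for the parameter $t$, dispose of the soft (closed) half, and then carry out the real work, which is to rule out an interior maximum of an auxiliary distance function. Write $a:=i(x)=d(x,y)$ (recall $i(x)=d(x,C(x))$ always) and $v_i:=\dot\g_i(a)\in\Si_y$ for $i=0,1$. Since $v_0,v_1$ are linearly independent, $W_t:=(1-t)v_0+tv_1$ is nonzero for all $t\in[0,1]$, so $w_t:=W_t/\Vert W_t\Vert$ is a smooth injective curve in $\Si_y$ with $w_0=v_0$ and $w_1=v_1$. Because $d(x,y)=a$, every unit-speed geodesic of length $a$ from $x$ to $y$ is minimizing, and such a geodesic $\g$ with $\dot\g(a)=w$ is exactly the reversal of $r\mapsto\exp_y(-rw)$, $r\in[0,a]$; hence the lemma is equivalent to the statement that $\exp_y(-aw_t)=x$ for every $t\in[0,1]$. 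Put $T:=\{t\in[0,1]:\exp_y(-aw_t)=x\}$. Then $0,1\in T$, and once $T=[0,1]$ is proved the desired family is $\g_t:=$ the reversal of $r\mapsto\exp_y(-rw_t)$, which has $\dot\g_t(i(x))=w_t$, i.e.\ \eqref{eq:luna}.

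First, $T$ is closed: if $t_n\in T$ and $t_n\to t$, then, since $\Vert -aw_t\Vert=a>0$, continuity of $\exp_y$ off the zero section gives $\exp_y(-aw_t)=\lim_n\exp_y(-aw_{t_n})=x$, so $t\in T$.

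Next I claim $T$ is open, i.e.\ $[0,1]\setminus T=\varnothing$. If not, pick a connected component $(\alpha,\beta)$ of this open set; since $0,1\in T$ and $T$ is closed, $0<\alpha<\beta<1$ and $\alpha,\beta\in T$. The continuous function $f(t):=d\big(x,\exp_y(-aw_t)\big)$ vanishes at $\alpha,\beta$ and is strictly positive on $(\alpha,\beta)$, hence attains a maximum at some $t_1\in(\alpha,\beta)$; set $z:=\exp_y(-aw_{t_1})\ne x$ and $\ell:=d(x,z)=f(t_1)>0$. The variation field of the family $r\mapsto\exp_y(-rw_t)$ at $t=t_1$ is the Jacobi field $Y$ along $\mu(r):=\exp_y(-rw_{t_1})$ with $Y(0)=0$ and $Y'(0)=-\dot w_{t_1}$, and $-\dot w_{t_1}$ is a nonzero vector of $P:=\mathrm{span}(v_0,v_1)\subset T_yM$ orthogonal to $w_{t_1}$. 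Expressing $f$ near $t_1$ through the function $d(x,\cdot)$, one uses the first variation formula at $z$ — at a maximum the right derivative of $f$ is $\le0$ and the left derivative $\ge0$, which pins down the relation between $Y(a)$ and the end velocities of minimizing geodesics from $x$ to $z$ — together with the second variation formula in the geodesic polar coordinates of \S3.2, to conclude that such a positive interior maximum is impossible. The key leverage is that two \emph{linearly independent} end velocities $v_0,v_1$ of minimizing geodesics from $x$ to the closest cut point $y$ cannot occur unless $y$ is conjugate to $x$ along the geodesics being deformed (otherwise Klingenberg's Lemma, Propositions \ref{Prop:K} and \ref{K1}, would make the end velocities antipodal), and the Jacobi field realizing this conjugacy — vanishing at both endpoints, with initial derivative taken in $P$ — is precisely what contradicts the sign conditions on $f$ at $t_1$. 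Hence $T=[0,1]$ and the lemma follows.

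The main obstacle is this last step. The open–closed framework and the closedness are routine, but excluding an interior maximum of $f$ is genuinely delicate: it requires the second variation computation together with the structural fact that linearly independent end velocities at a closest cut point force a conjugate point. This is exactly the point at which Berger's original argument invoked positive sectional curvature and where Omori's treatment (summarized in \cite{Besse}, \cite{Omori}) removes it; getting the signs in this variational estimate to close uniformly along the one-parameter deformation is where essentially all the work lies.
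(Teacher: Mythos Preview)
Your open--closed reformulation is correct, and the closedness of $T$ is as routine as you say. But the openness step is not a proof: you describe a strategy (first variation at an interior maximum of $f$, second variation, conjugate points forced by Klingenberg), explicitly acknowledge that ``essentially all the work lies'' there, and then stop. That is a gap. Worse, the sketched mechanism is murky: the conjugate point you extract lies along a geodesic from $x$ to $y$, while the function $f$ you are analyzing is $d(x,\cdot)$ evaluated along a curve through $z=\exp_y(-aw_{t_1})\neq x$, and you give no link between a Jacobi field vanishing at $y$ and the sign of the second variation of $d(x,\cdot)$ at $z$. Invoking Omori's removal of the curvature hypothesis is a citation, not an argument.

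The paper's proof avoids all of this; it uses neither second variation nor conjugate points nor a connectedness argument. It is a direct ball-inclusion construction in $T_yM$. With $\rho=\de(x)/2$ and $\rho'=i(x)-\rho$, the metric balls $B(\g_j(\rho'),\rho)$ lie in $B(x,i(x))$ and their closures meet $\partial B(x,i(x))$ only at $y$. Pull these back by $(\exp_y|_{U_y})^{-1}$ and fit inside each pullback a small Euclidean ball $B(\xi_j,r_j)\subset T_yM$ through the origin, with $\xi_j=-r_j v_j$. Now comes the one-line Euclidean fact that does all the work: for $\xi_\lambda:=(1-\lambda)\xi_0+\lambda\xi_1$ one has $B(\xi_\lambda,\Vert\xi_\lambda\Vert)\subset B(\xi_0,r_0)\cup B(\xi_1,r_1)$ (since $\langle p,\xi_\lambda\rangle>\Vert p\Vert^2/2$ forces $\langle p,\xi_j\rangle>\Vert p\Vert^2/2$ for some $j$); linear independence of $v_0,v_1$ is used only to ensure $\xi_\lambda\neq 0$. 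After a small rescaling $h_\lambda$, the image under $\exp_y$ contains a genuine metric ball whose closure meets $\partial B(x,i(x))$ only at $y$. Take a minimizing geodesic $\si_\lambda$ from $x$ to the center $\exp_y(h_\lambda\xi_\lambda)$; the triangle inequality forces $\si_\lambda(i(x))$ to lie both in that closed ball and on $\partial B(x,i(x))$, hence $\si_\lambda(i(x))=y$. This produces the required minimizing geodesic for every $\lambda\in[0,1]$ directly.
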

					
					\begin{proof}
						{\rm
							Let $\rho:=\de(x)/2$ and $\rho':=i(x)-\rho$. We then have
							\[          B(\g_j(\rho'),\rho)\subset B(x, i(x)),\quad j=0,1,      \]
							and also 
							\[  
							\{\bar B(\g_0(\rho'),\rho)\cup\bar B(\g_1(\rho'),\rho)\}\cap\partial B(x,i(x))=\{y\}. 
							\]
							Again, let $\wU_y\subset T_yM$ be the maximal domain in $T_yM$ on which the exponential map at $y$ is an embedding. We already know that $\wC(y)=\partial \wU_y$.  Set
							\[  \mathcal S_j:=(\exp_y|_{U_y})^{-1}(\partial B(\g_j(\rho'),\rho)),\quad j=0,1. \]
							Then $\mathcal S_j$ is a smooth hypersurface in $T_yM$. Clearly, $c_j:=(\exp_y|_{U_y})^{-1}(\g_j[\rho',i(x)])\subset T_yM$ is a straight line segment, and normal to $\mathcal S_j$ at $O\in T_yM$. Using the assumption for $\g_0$ and $\g_1$, we choose a small number $0<r_j<\rho$ and a vector $\xi_j:=-r_j\dot\g_j(i(x))\in T_yM$ such that $B(\xi_j,r_j)\subset (\exp_y|_{U_y})^{-1}(B(\g_j(\rho'),\rho))$, $j=0,1$. In fact, both $\partial B(\xi_j,r_j)$ and $(\exp_y|_{U_y})^{-1}(\partial B(\g_j(\rho'),\rho))$ are smooth hypersurfaces in $T_yM$ and have the same tangent spaces at the origin. We then have
							\[   \exp_y\{\overline{ B}(\xi_0,r_0)\cup\overline{ B}(\xi_1,r_1)\}\cap \partial B(x,i(x))=\{y\}. \]
							
							Let $\xi_\lambda:=(1-\lambda)\xi_0+\lambda\xi_1\in T_yM$ for $\lambda\in[0,1]$. Since the figures are all in a Euclidean space, it is clear that
							\begin{eqnarray}\label{eq:twoballs}
								B(\xi_\lambda,\Vert \xi_\lambda \Vert)\subset B(\xi_0,r_0)\cup B(\xi_1,r_1),\quad \forall\lambda\in[0,1],  \\
								(\exp_y\,|_{U_y})(\partial B(\xi_\lambda,\Vert \xi_\lambda \Vert))\cap\partial B(x,i(x))=\{y\}.
							\end{eqnarray}
							Choosing $h_\lambda>0$ sufficiently small, we deduce from the inclusion relation in $T_yM$ that
							\[ B(\exp_y h_\lambda\xi_\lambda, \Vert h_\lambda\xi_\lambda \Vert)\subset \exp_y(B(\xi_\lambda,\Vert \xi_\lambda \Vert)).    \]  
							Finally, the triangle inequality implies
							\[   d(x,\exp_y h_\lambda\xi_\lambda)+d(\exp_y h_\lambda\xi_\lambda,y)\ge i(x).    \]
							Setting $\si_\lambda:[0,\ell_\lambda]\to(M,g)$ a minimizing geodesic with
							\[   \si_\lambda(0)=x,\quad \si_\lambda(\ell_\lambda)=\exp_y h_\lambda\xi_\lambda,     \]
							we observe from the above triangle inequality that
							\[   d(\si_\lambda(\ell_\lambda),\si_\lambda(i(x))=i(x)-\ell_\lambda\le \Vert h_\lambda\xi_\lambda \Vert.     \]        
							If $\si_\lambda(i(x))\neq y$, then
							\[  \si_\lambda(i(x))\in \bar B(\si_\lambda(\ell_\lambda),\Vert h_\lambda\xi_\lambda \Vert).    \] 
							On the other hand, $\si_\lambda(i(x))\in\partial B(x,i(x))$ means that the point $\si_\lambda(i(x))$ stays outside $\exp_y(B(\xi_0,\Vert\xi_0\Vert)\cup B(\xi_1,\Vert\xi_1\Vert))\setminus\{y\}$, which is a contradiction. Thus we have $\si_\lambda(i(x))=y$ for all $\lambda\in[0,1]$.     
						}
					\end{proof}
					\begin{Rem}
						{\rm
							To discuss the Berger-Omori Lemma in the extent of Finsler manifolds, we need to consider $T_yM$ as a normed space and introduce a new idea for the proof of the Finsler version; see Theorem 4.2 in \cite{IINS} for details.
						}
					\end{Rem}
					\par\medskip
					\subsection{The Rauch conjecture}
					The classical Rauch conjecture~\cite{Rauch} predicts that the cut locus $\wC(x)\subset T_xM$ and the conjugate locus $\wJ(x)\subset T_xM$ to a point $x$ on a compact and simply connected Riemannian $n$-manifold 
					$(M,g)$ have a point in common in its tangent space $T_xM$:
					\begin{equation}\label{eq:Rauch}
					\wC(x)\cap\wJ(x)\neq\emptyset,\quad\text{for all $x\in M$}.            
					\end{equation}                
					The conjecture is true for all metrics on $S^2$ and for all compact rank one symmetric spaces.  Also it is true for a complete noncompact Riemannian $2$-manifolds homeomorphic to $\R^2$. A.Weinstein~\cite{Weinstein} settled this conjecture negatively in general, by proving that if $M$ is not homeomorphic to $S^2$, then there exists a metric $g$ and a point $x\in M$ such that 
					\[          \wC(x)\cap\wJ(x)=\emptyset.      \]  
					
					We next discuss the results obtained in \cite{ISS} on complete Finsler $n$-manifolds. Let $(M,F)$ be a connected and geodesically complete Finsler $n$-manifold, $n\ge 2$.  It is elementary to see that $i_x(u)\le c_x(u)$, in general, and $i_x(u)=c_x(u)<\infty$ holds for some $u\in\Si_x$ if and only if the Rauch conjecture is valid at $x$. 
					If $\wC_x=\emptyset$ (or equivalently, $C(x)=\exp_x\,\wC_x=\emptyset$), then $\g_u:[0,\infty)\to (M,F)$ is a ray for all $u\in\Si_x$. Such a point is called a {\it forward pole} of $(M,F)$.  An ellipsoid with foci at $x,y\in (M,F)$ and radius $r>d(x,y)$ is denoted by $E_{xy}(r)\subset M$ and
					\[   E_{xy}(r):=\{ z\in M | d(x,z)+d(z,y)=r \},        \] 
					\[    B_{xy}(r):=\{ z\in M | d(x,z)+d(z,y)<r \}.     \] 
					Notice that $B_{xx}(2r)\neq B(x,r)$ follows if $d$ is not symmetric.  We further define the function $F_{xy}:(M,F)\to\R$ by
					\[        F_{xy}(z):=d(x,z)+d(z,y),\quad z\in M.       \]   
					Notice also that $F_{xy}\neq F_{yx}$ and $E_{xy}(r)\neq E_{yx}(r)$.    
					\par\medskip
					\begin{Th}[compare Theorem 1 ; \cite{ISS}]\label{Th:GAFA}
						{\rm
							Let $(M,F)$ be an $n$-dimensional compact Finsler manifold. Assume that there is a point $x\in M$ satisfying 
							\begin{equation}\label{eq:Weinstein}
							\wC(x)\cap\wJ(x)=\emptyset.
							\end{equation}            
							Then for any point $y \in M\setminus \{x\}$ there exist at least two distinct geodesics emanating from $x$ and ending at $y$.  
						}
					\end{Th}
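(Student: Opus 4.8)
The plan is to split into the cases $y\in C(x)$ and $y\in M\setminus(C(x)\cup\{x\})=U_x\setminus\{x\}$, after extracting the content of the hypothesis. Since each open ray from the origin of $T_xM$ meets $\wC(x)$ and $\wJ(x)$ in at most one point, the condition $\wC(x)\cap\wJ(x)=\emptyset$ is equivalent to $i_x(u)<c_x(u)$ for every $u\in\Si_x$; as $i_x$ is continuous and $c_x$ lower semicontinuous on the compact set $\Si_x$, this moreover gives a uniform gap $c_x(u)\ge i_x(u)+\de_0$ for some $\de_0>0$. Two consequences will be used. (a) For every $y\ne x$ the minimizing geodesic from $x$ to $y$ reaches $y$ strictly before its first conjugate point, so $\exp_x$ is nonsingular at the corresponding vector; hence, if $y\ne x$ happens to be conjugate to $x$ along some geodesic $\si$ from $x$ to $y$, then $\si$ is automatically a second geodesic from $x$ to $y$. (b) By Proposition \ref{Prop:K}, every $p\in C(x)$ is joined to $x$ by at least two distinct minimizing geodesics, since the cut point $\g_u(i_x(u))$ is not conjugate to $x$ along $\g_u$ (as $i_x(u)<c_x(u)$). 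Thus the theorem holds for all $y\in C(x)$ (note $C(x)\ne\emptyset$ by compactness) and for all $y$ conjugate to $x$ along some geodesic; it remains to treat $y\in U_x\setminus\{x\}$ that are not conjugate to $x$ along any geodesic from $x$.

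For these $y$ the natural approach is a connectedness argument. Using the uniform gap, fix a bounded star-shaped open set $\widetilde V\supset\overline{\wU_x}$ with $\overline{\widetilde V}$ compact and contained in the first conjugate domain, so that $\exp_x$ is a local diffeomorphism on all of $\overline{\widetilde V}$, in particular in a neighbourhood of $\wC(x)$. Let $A=\{y\in U_x\setminus\{x\}:\#\exp_x^{-1}(y)\ge 2\}$. Then $A$ is nonempty: any $y\in U_x\setminus\{x\}$ sufficiently close to a point $p\in C(x)$ carrying two minimizing geodesics has two preimages, one in each of the disjoint small balls about the two points $w_1,w_2\in\wC(x)\subset\widetilde V$ over $p$, which $\exp_x$ carries diffeomorphically onto neighbourhoods of $p$. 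Moreover, wherever $y\in A$ is not conjugate to $x$ along any of its geodesics, $A$ is open at $y$: every preimage of $y$ is then a regular point of $\exp_x$, the minimal one is the unique preimage lying in $\overline{\wU_x}$, and the other persists (outside $\overline{\wU_x}$) under small perturbations of $y$. Since the set of $y$ conjugate to $x$ along some geodesic is contained in $A$ by (a), the goal is to combine these observations with the connectedness of $U_x\setminus\{x\}$ (here $n\ge 2$ is used) to conclude $A=U_x\setminus\{x\}$.

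The real difficulty — the content of \cite{ISS} — is that a ``second geodesic'' from $x$ to a point $y'$ lying near a point conjugate to $x$, or near $x$ itself, may be long, so that as $y'$ varies the competitor geodesics threaten to escape to infinite length rather than to converge to a second geodesic of the limit point; both the openness of $A$ at the conjugate points and the closedness of $(U_x\setminus\{x\})\setminus A$ hinge on ruling this out. Controlling it requires a quantitative compactness argument along the extended geodesics: one uses the compactness of $\overline{\wU_x}$, the uniform positivity of $c_x-i_x$ (so that $\exp_x$ is a local diffeomorphism on the fixed neighbourhood $\overline{\widetilde V}$, hence on uniformly sized pieces of all extended geodesics through $\overline{\wU_x}$), and a limiting/diagonal argument to produce, for each such $y$, a second geodesic of a priori bounded length. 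Granting this, $A=U_x\setminus\{x\}$, which together with the cut-locus case completes the proof.
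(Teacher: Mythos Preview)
Your handling of the case $y\in C(x)$ via Proposition~\ref{Prop:K} matches the paper. For $y\in U_x\setminus\{x\}$, however, the open--closed/connectedness scheme has the gap you yourself flag and do not close. To run the argument you need an a priori bound on the length of the ``second'' geodesic from $x$ to $y$ (equivalently, on where the second preimage under $\exp_x$ sits); without it, as $y_j\to y$ the competitor geodesics may have length tending to infinity and produce nothing in the limit, so neither openness of $A$ at conjugate points nor closedness of $A$ follows. Your proposed fix---compactness of $\overline{\wU_x}$, the uniform gap $c_x-i_x\ge\de_0$, and a ``limiting/diagonal argument''---does not yield such a bound: knowing that $\exp_x$ is a local diffeomorphism on a fixed collar $\overline{\widetilde V}\supset\overline{\wU_x}$ controls only those preimages that happen to lie in $\widetilde V$, and there is no reason the second preimage of a generic $y\in U_x$ lies there rather than far out in $T_xM$, corresponding to a geodesic winding many times around $M$. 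The sentence ``Granting this'' is exactly where the proof is missing.

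The paper bypasses this difficulty with a constructive argument rather than a topological one. For $y\in U_x\setminus\{x\}$ it minimizes $F_{xy}(z):=d(x,z)+d(z,y)$ over the compact set $C(x)$, obtaining $x_0\in C(x)$ with $B_{xy}(\ell)\cap C(x)=\emptyset$ for $\ell:=F_{xy}(x_0)$. Short--cut estimates force the minimizing geodesic $\si:x_0\to y$ to satisfy $\si((0,d(x_0,y)])\subset U_x$, and a lifting argument through the local diffeomorphisms $\exp_x|_{\Om_j}$ at the preimages of $x_0$ on $\wC(x)$ (available precisely because of (\ref{eq:Weinstein})) shows that there are exactly two minimizing geodesics $\g_1,\g_2:x\to x_0$ and that one of them extends $\si$ smoothly at $x_0$. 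The concatenation is then a second geodesic from $x$ to $y$ of length exactly $\ell=d(x,x_0)+d(x_0,y)$; the length bound is built into the construction, so no compactness of competitors is ever needed.
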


					\begin{proof}
						{\rm
							If there exist two distinct minimizing geodesics emanating from $x$ and ending at $y$, nothing is left to prove. \par
							If $y\in C(x)$ and if there exists a unique minimizing geodesic $\tau:[0,d(x,y)]\to (M,F)$ with $\tau(0)=x$ and $\tau(d(x,y))=y$, then every extension $\tau|_{[0, d(x,y)+\e]}$ of $\tau$ beyond $y$ is not minimizing for all $\e>0$. The uniqueness of $\tau$ implies that if $\tau_\e:[0,d(x,\tau(d(x,y)+\e)]\to(M,F)$ is a minimizing geodesic with $\tau_\e(0)=x$ and $\tau_\e(d(x,\tau(d(x,y)+\e)))=\tau(d(x,y)+\e)$, then $\tau_\e\neq\tau|_{[0,d(x,y)+\e]}$. The uniqueness of a minimizing geodesic from $x$ to $y$  then implies that $\lim_{\e\to 0}\tau_\e=\tau$. Therefore $\exp_x:T_xM\to(M,F)$ is not bijective in any open set of $d(x,y)\cdot\dot\tau(0)$, and hence $y$ is conjugate to $x$ along $\tau$. Thus we have $d(x,y)\cdot\dot\tau(0)\in TC(x)\cap TJ(x)$, a contradiction to (\ref{eq:Weinstein}). Thus we observe that there are at least two distinct minimizing geodesics from $x$ to $y$ for all $y\in C(x)$.  We may therefore suppose that there exists a unique minimizing geodesic from $x$ to $y$ and that $y\notin C(x)$.
							\par\medskip
							The construction of a non-minimizing geodesic joining $x$ to $y$ is achieved by using the technique developed in~\cite{ISS}. There exists a cut point $x_0\in C(x)$ such that, 
							\begin{equation}\label{eq:ellipsoid}  
							B_{xy}(\ell)\cap C(x)=\emptyset,\; x_0\in E_{xy}(\ell)\cap C(x),\; \ell:=F_{xy}(x_0).          
							\end{equation}
							Let $\si:[0,d(x_0,y)]\to(M,F)$ be a minimizing geodesic with 
							$\si(0)=x_0$, $\si(d(x_0,y))=y$. We assert that {\it there exist exactly two distinct minimizing geodesics $\g_j:[0,d(x,x_0)]\to(M,F)$, $j=1,2$, with $\g_j(0)=x$, $\g_j(d(x,x_0))=x_0$ such that one of the composed geodesics $\g_1\ast\si$ or $\g_2\ast\si$ forms a geodesic joining $x$ to $y$}. Here, $\g_j\ast\si:[0,\ell]\to(M,F)$ is a broken geodesic given by
							\begin{equation*}
								\g_j\ast\si(t)=\begin{cases}
									\g_j(t),\quad t\in[0,d(x,x_0)],  \\
									\si(t-d(x,x_0)),\quad t\in[d(x,x_0),\ell].
								\end{cases}
							\end{equation*}                    
							
							For the proof of the above assertion, we argue  by deriving a contradiction, assuming the contrary.  Suppose that there are more than two distinct minimizing geodesics from $x$ to $x_0$. If $\si$ is a fixed minimizing geodesic from $x_0$ to $y$, we  choose two minimizing geodesics $\g_1$ and $\g_2$ such that both $\g_1\ast\si$ and $\g_2\ast\si$ are broken geodesics with a corner at $x_0$. We then find points $z\in\si(0,d(x_0,y)]$ and $y_j\in\g_j([0,d(x,x_0))$ lying in a convex ball around $x_0$. The short cut principle then implies that
							\[    d(y_j,z)<d(y_j,x_0)+d(x_0,z),\quad j=1,2,     \]
							and hence 
							\begin{align*}
								F_{xy}(z) &=d(x,z)+d(z,y)<((d(x,y_j)+d(y_j,z))+d(z,y)  \\
								&<  d(x,y_j)+(d(y_j,x_0)+d(x_0,z))+d(z,y)  \\
								&= d(x,x_0)+d(x_0,y)=F_{xy}(x_0)=\ell.
							\end{align*}                  
							We therefore have $z\in B_{xy}(\ell)$, and in particular, $z\notin C(x)$. This implies that $\si(0,d(x_0,y))\subset\exp_x(U_x)$. From (\ref{eq:Weinstein}), there exists an open set $\Om_i\in T_xM$ of $d(x,x_0)\cdot\dot\g_j(0)$ for $j=1,2$ such that $\exp_x|_{\Om_j}:\Om_j\to(M,F)$ is an embedding. The lifting of $\si((0,d(x_0,y)])$ along the diffeomorphism $\exp_x|_{U_x}:U_x\to (M,F)\setminus C(x)$ forms distinct curves joining $d(x,x_0)\cdot\dot\g_j(0)$ to $d(x,y)\cdot\dot\si(0)$, $j=1,2$. However this is impossible by the uniqueness of $d(x,y)\cdot\dot\si(0)$.\par\smallskip
							As shown in the proof of the above assertion, one of the $\g_1\ast\si$ and $\g_2\ast\si$ forms a geodesic emanating from $x$ and ending at $y$; it may be noted that  since this geodesic passes through a forward cut point $x_0$ to $x$, it is not minimizing.
						}
					\end{proof}
					As an application of Theorem \ref{Th:GAFA}, we deduce the following:
					
					\begin{Th}\label{Th:injectivity}
						{\rm
							Let $(M,F)$ be a compact Finsler $n$-manifold, $n\ge 2$	{\it and $\lambda=\lambda(M)$ be the reversibility constant of $M$, as defined in (2.1).} If $x\in M$ is such that the Rauch conjecture does not hold, then there exists a geodesic loop $\g_x : [0,2\ell_x]\to (M,F)$ with $\g_x(0)=\g_x(2\ell_x)=x$ such that 
							\[       (1+\lambda^{-1})i(x)\le L(\g_x).    \]
						}
					\end{Th}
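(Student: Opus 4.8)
The plan is to find a cut point of $x$ that is joined to $x$ by a minimizing geodesic of length exactly $i(x)$ and that is \emph{not} conjugate to $x$, to use the Klingenberg Lemma to produce a second minimizing geodesic to that cut point, to splice the two geodesics into a loop based at $x$, and to bound from below the length of the ``return half'' of the loop by the reversibility constant. The hypothesis that the Rauch conjecture fails at $x$ enters exactly once, namely to guarantee that the cut point in question is not conjugate to $x$.

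Since $M$ is compact, it is forward and backward complete, so the Hopf--Rinow theorem applies, $\lambda=\lambda(M)<\infty$, and $C(x)\neq\emptyset$ with $i(x)<\infty$. As $\Si_x$ is compact and $u\mapsto i_x(u)$ is continuous and finite on it, there is $u_0\in\Si_x$ with $i_x(u_0)=i(x)$; I would set $\g_0:=\g_{u_0}$ and $y:=\g_0(i(x))\in C(x)$, so that $\g_0|_{[0,i(x)]}$ is a unit-speed minimizing geodesic of length $i(x)$ from $x$ to $y$. Now the hypothesis is used: $\wC(x)\cap\wJ(x)=\emptyset$ forces $i_x(u)u\neq c_x(u)u$, i.e.\ $i_x(u)\neq c_x(u)$, for every $u\in\Si_x$; together with $c_x(u)\ge i_x(u)$ from \eqref{eq:cutconjugatelocus} this gives $i_x(u)<c_x(u)$ for all $u\in\Si_x$, in particular $i(x)=i_x(u_0)<c_x(u_0)$, so $y$ is not conjugate to $x$ along $\g_0$.

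Next I would apply Proposition \ref{Prop:K} to the minimizing geodesic $\g_0$ from $x$ to $y$: alternative (1) is ruled out by the previous paragraph, so alternative (2) holds and there is a minimizing geodesic $\g_1:[0,i(x)]\to(M,F)$ with $\g_1(0)=x$, $\g_1(i(x))=y$ and $\g_1\neq\g_0$ (this is also part of Theorem \ref{Th:GAFA}, whose proof establishes that every point of $C(x)$ is joined to $x$ by at least two distinct minimizing geodesics). Let $\g_x$ be the loop based at $x$ that traverses $\g_0$ from $x$ to $y$ and then returns from $y$ to $x$ along $\g_1$ traversed backwards, parametrized by arc length so that $\g_x:[0,2\ell_x]\to(M,F)$ with $\g_x(0)=\g_x(2\ell_x)=x$ and $2\ell_x:=L(\g_x)$. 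Since length is additive over concatenation and $F(q,-w)\ge\lambda^{-1}F(q,w)$ for all $(q,w)\in TM$ directly from the definition of $\lambda=\lambda(M)$, one gets $L(\g_x)=L(\g_0)+L(\g_1^{-1})=i(x)+\int_0^{i(x)}F(\g_1(t),-\dot\g_1(t))\,dt\ge i(x)+\lambda^{-1}\int_0^{i(x)}F(\g_1(t),\dot\g_1(t))\,dt=(1+\lambda^{-1})i(x)$, which is the assertion.

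The step I expect to carry the weight is the passage to the second minimizing geodesic $\g_1$: it is precisely here that ``the Rauch conjecture does not hold at $x$'' is used, by way of \eqref{eq:cutconjugatelocus} and Proposition \ref{Prop:K}. One should also note, in contrast with the Riemannian situation of Proposition \ref{K1} (where $\dot\g_0(i(x))=-\dot\g_1(i(x))$), that in the Finsler setting the two minimizing geodesics need not meet at $y$ in opposite directions --- see the Remark following Proposition \ref{K1} --- so that $\g_x$ is in general a loop built from two geodesic arcs with a possible corner at $y$, and it is exactly this lack of symmetry that degrades the Riemannian bound $2i(x)$ to $(1+\lambda^{-1})i(x)$. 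The remaining ingredients --- the completeness and compactness facts, the choice of $u_0$, and the length estimate --- are routine once the reversibility constant is brought in as above.
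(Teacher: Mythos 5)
Your argument has a genuine gap: the curve you build is not a geodesic loop, and the theorem asserts the existence of one. Concatenating $\g_0$ with the reversed curve $\g_1^{-1}$ produces a closed curve that fails to be a geodesic for two separate reasons. First, in a non-reversible Finsler manifold the reverse of an $F$-geodesic is in general not an $F$-geodesic (it is a geodesic of the reverse metric $\tilde F(q,w)=F(q,-w)$), so the ``return half'' of your loop is not even a geodesic arc. Second, even if it were, the two arcs meet at $y$ with a corner: as you yourself note, the Finsler version of the Klingenberg lemma (Proposition \ref{Prop:K} and the Remark following Proposition \ref{K1}) does not give $\dot\g_0(i(x))=-\dot\g_1(i(x))$, and there is no mechanism in your argument to remove that corner. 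What you have proved is the existence of a closed curve through $x$ and a nearest cut point, made of a geodesic and a reversed geodesic, of length at least $(1+\lambda^{-1})i(x)$ --- a strictly weaker statement. The whole weight of the theorem lies in producing an unbroken geodesic $\g_x:[0,2\ell_x]\to(M,F)$ with $\g_x(0)=\g_x(2\ell_x)=x$; the length estimate is the easy part.

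The paper obtains the genuine geodesic loop by a different use of the hypothesis $\wC(x)\cap\wJ(x)=\emptyset$, namely through Theorem \ref{Th:GAFA}: for every $y\notin C(x)$ close to $x$ one chooses a cut point $x_0(y)\in E_{xy}(\ell(y))\cap C(x)$ with $B_{xy}(\ell(y))\cap C(x)=\emptyset$, and the proof of Theorem \ref{Th:GAFA} shows that one of the two concatenations $\g_j\ast\si$ is \emph{smooth} at $x_0(y)$, i.e.\ is an honest (non-minimizing) geodesic from $x$ to $y$ passing through the cut point; the failure of the Rauch conjecture is exactly what makes the lifting argument work and forces one of the two concatenations to have no corner. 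Letting $y\to x$ and passing to a limit of these geodesics yields a true geodesic loop $\g_x$ based at $x$ through a point of $C(x)$. The length bound then follows as in your last step: the outgoing piece has length $d(x,x_0(y))\ge i(x)$, and the triangle inequality together with the reversibility constant gives $d(x_0(y),y)\ge\lambda^{-1}(i(x)-d(x,y))$, so $\ell_x\ge(1+\lambda^{-1})i(x)$ in the limit. Your reversibility estimate is fine, but to repair the proof you need the smooth-splicing argument of Theorem \ref{Th:GAFA} (or an equivalent device) rather than the Klingenberg digon at a nearest cut point.
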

					\begin{proof}
						The same technique as involved in the proofs of Theorem \ref{Th:GAFA} is employed here. For an arbitrary fixed point $y\notin C(x)$ with $y\neq x$, we choose a point $x_0(y)\in C(x)$ such that, setting $\ell(y):=d(x,x_0(y))+d(x_0(y),y)$, 
						\[    B_{xy}(\ell(y))\cap C(x)=\emptyset,\quad x_0(y)\in E_{xy}(\ell(y))\cap C(x). \]
						By Theorem \ref{Th:GAFA}, there exists a geodesic 
						$\g_{xy}:[0,\ell(y)]\to M$ such that $\g_{xy}(0)=x$, $\g_{xy}(d(x,x_0(y))=x_0(y)$ and $\g_{xy}(\ell(y))=y$. From the construction we see that $L(\g_{xy}[0,d(x,x_0(y))])=d(x,x_0(y))$ and $L(\g_{xy}[d(x,x_0(y)),\ell(y)])=d(x_0(y),y)$. Taking a sequence $\{y\}\subset M\setminus C(x)$ converging to $x$, we find a geodesic loop $\g_x:[0,\ell_x]\to M$ as the limit of $\{\g_{xy}\}$, where $\g_x=\lim_{y\to x}\g_{xy}$ and 
						$\ell_x=\lim_{y\to x}\ell(y)$. We then observe that $\g_x(d(x,x_0(x)))\in C(x)$ and that 
						\[     L(\g_{xy}[0,d(x,x_0(y))])\ge i(x).    \]
						From the triangle inequality we have,  
						\begin{align*}
							L(\g_{xy}[d(x,x_0(y)),\ell(y)]) &=d(x_0(y),y)\ge \lambda^{-1}d(y,x_0(y)) \\
							&\ge \lambda^{-1}[d(x,x_0(y))-d(x,y)]\ge\lambda^{-1}(i(x)-d(x,y)).    
						\end{align*}      
						Therefore, by letting $y\to x$ we get, $\ell_x\ge(1+\lambda^{-1})i(x)$.  This proves Theorem~\ref{Th:injectivity}.
					\end{proof}
					\par\medskip
					\subsection{Poles}
					The original Rauch conjecture\index{Rauch conjecture}\index{conjecture!Rauch} was considered on compact and simply connected Riemannian manifolds. We discuss it on complete non-compact Riemannian and Finsler manifolds  admitting poles. For the discussion of the Rauch conjecture on complete non-compact manifolds, we need the notion of poles. The Rauch conjecture is valid for a point $x\in (\mathbb R^2,g)$ of a complete noncompact Riemannian $2$-manifold homeomorphic to a plane, if $C(x)\neq\emptyset$. In fact, $C(x)$ for every point $x\in(\mathbb R^2,g)$ carries the structure of a tree. A cut point $y\in C(x)$, $x\in(\mathbb R^2,g)$ is called an {\it endpoint} of $C(x)$ if $C(x)\setminus \{x\}$ is connected. The cut loci of Riemannian $2$-manifolds have been discussed by many authors; for instance, see \cite{Myers1}, \cite{Myers2}, \cite{Whitehead2} and \cite{ST1}.  Let $x\in(\mathbb R^2,g)$ be a point such that $C(x)\neq\emptyset$. Then there is a point $y\in C(x)$ which is an endpoint of $C(x)$. If $\g:[0,d(x,y)]\to(\mathbb R^2,g)$ is a minimizing geodesic with $\g(0)=x$, $\g(d(x,y))=y$, then $y$ is conjugate to $x$ along $\g$, and hence $\wC(x)\cap\wJ(x)$ contains $d(x,y)\dot\g(0)$, if $C(x)\neq\emptyset$. \par
					Let $(M,F)$ be a geodesically complete Finsler $n$-manifold. A unit speed forward geodesic $\g:[0,\infty)\to(M,F)$ is by definition a {\it forward ray} if every subarc $\g|_{[a,b]}$, $0\le a<b<\infty$ of $\g$ is minimizing. A point $x\in(M,F)$ is called a {\it forward pole} if $C(x)=\emptyset$. Clearly, $\exp_x:T_xM\to(M,F)$ is a diffeomorphism if and only if $x$ is a forward pole. A backward geodesic $\g^{-1}(-\infty,0]\to(M,F)$ is called a {\it backward ray} if
					\[   d(\g^{-1}(s),\g^{-1}(t))=t-s,\quad\text{for all $0>t>s>-\infty$}.     \] 
					A point $y\in(M,F)$ is called a {\it backward pole} if every backward geodesic $\si^{-1}:(-\infty,0]\to(M,F)$ with $\si^{-1}(0)=y$ is a backward ray.\par\medskip
					The relation between the Rauch conjecture and poles on complete noncompact Riemannian $n$-manifolds has been discussed in \cite{ISS}. We have the following relation between the Rauch conjecture and poles on complete non-compact Finsler manifolds. The proof is essentially contained in \cite{ISS} and is omitted here.\par\medskip
					\begin{Prop}\label{Prop:pole}
						{\rm
							Let $(M,g)$ and $(M,F)$ be complete Riemannian and Finsler manifolds respectively. 
							
							\begin{enumerate}
								\item If $(M,g)$ admits a pole at $y\in M$ and if $x\in M$ is not a pole, then the Rauch conjecture is valid at $x$.
								\item  If $(M,F)$ admits a backward pole, then either the Rauch conjecture holds at a point $y\in M$ or otherwise, $y$ is a forward pole.
							\end{enumerate}
						}
					\end{Prop}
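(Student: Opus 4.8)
The plan is to derive both statements from the construction used in the proof of Theorem~\ref{Th:GAFA}, together with short triangle–inequality arguments exploiting the symmetry of $g$ in~(1) and the backward-pole hypothesis in~(2). First I observe that the proof of Theorem~\ref{Th:GAFA} uses compactness only to guarantee that $F_{xy}$ attains a minimum on the closed set $C(x)$; since $F_{xy}(z)=d(x,z)+d(z,y)\ge d(x,z)\to\infty$ as $z$ leaves compact sets on a complete manifold (Hopf--Rinow), $F_{xy}$ is proper on $C(x)$, so the point $x_0$ with $B_{xy}(\ell)\cap C(x)=\emptyset$, $x_0\in E_{xy}(\ell)\cap C(x)$, $\ell=F_{xy}(x_0)$ still exists, and the remainder of that proof is local. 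Thus the conclusion of Theorem~\ref{Th:GAFA} holds on \emph{any} complete (possibly noncompact) Riemannian or Finsler manifold at a point $x$ with $C(x)\neq\emptyset$ and $\wC(x)\cap\wJ(x)=\emptyset$: through every $z\neq x$ pass at least two distinct geodesics issuing from $x$, and if $z\notin C(x)$ one of them passes through a forward cut point of $x$ and is therefore non-minimizing.

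\textbf{Part (1).} Suppose $(M,g)$ has a pole at $y$ and $x$ is not a pole, so $C(x)\neq\emptyset$ and $x\neq y$. Since $\exp_y$ is a diffeomorphism, there is exactly one geodesic from $y$ to $x$, it carries no conjugate point, and (being a subarc of a ray from the pole $y$) it is minimizing. Its reversal is again a geodesic because $g$ is Riemannian; hence there is exactly one geodesic from $x$ to $y$, it is minimizing, and, conjugacy being symmetric, $y$ is not conjugate to $x$ along it. By Proposition~\ref{Prop:K} it follows that $y\notin C(x)$, and there is \emph{no} non-minimizing geodesic from $x$ to $y$ at all. Now if the Rauch conjecture failed at $x$, i.e.\ $\wC(x)\cap\wJ(x)=\emptyset$, the construction in the proof of Theorem~\ref{Th:GAFA}, applied to $(x,y)$ with $y\notin C(x)$, a unique minimizing geodesic from $x$ to $y$, and $C(x)\neq\emptyset$, would produce a geodesic from $x$ to $y$ through a forward cut point of $x$, hence non-minimizing --- a contradiction. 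So $\wC(x)\cap\wJ(x)\neq\emptyset$.

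\textbf{Part (2).} Let $y$ be a backward pole of $(M,F)$; the claim is that either $y$ is a forward pole or the Rauch conjecture holds at $y$. Assume $y$ is not a forward pole, so $C(y)\neq\emptyset$. The key consequence of the backward-pole hypothesis is: \emph{every forward geodesic $\g:[0,L]\to M$ with $\g(0)=y$ satisfies $d(\g(L),y)=L$.} Indeed, extend $\g$ to a forward geodesic on $[0,\infty)$ (forward geodesic completeness); its reversal $\g^{-1}:(-\infty,0]\to M$ with $\g^{-1}(0)=y$ is, by hypothesis, a backward ray, which unwinds to $d(\g(b),\g(a))=b-a$ for all $0\le a<b$; letting $a\to 0^{+}$ and using continuity of $d$ gives $d(\g(L),y)=L$. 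Now suppose, for contradiction, that $\wC(y)\cap\wJ(y)=\emptyset$, and pick $z\notin C(y)$ with $z\neq y$ (possible, since $d(y,C(y))=i(y)>0$). By the noncompact extension of Theorem~\ref{Th:GAFA} noted above there are at least two distinct forward geodesics from $y$ to $z$; as $z\notin C(y)$, one of them, $\g_0$, is the unique minimizing geodesic, so $L(\g_0)=d(y,z)$, while a second one, $\g_1$, runs through a forward cut point of $y$, so $L(\g_1)>d(y,z)$. Applying the italicised statement to $\g_0$ and to $\g_1$ gives $d(z,y)=L(\g_0)=d(y,z)$ and $d(z,y)=L(\g_1)>d(y,z)$ simultaneously, which is absurd. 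Hence $\wC(y)\cap\wJ(y)\neq\emptyset$.

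\textbf{Expected main difficulty.} Part~(1) is essentially immediate once one notices that a pole at $y$ leaves exactly one geodesic into $y$. The work is in Part~(2): one must extract from the asymmetric backward-pole hypothesis --- stated via backward rays on $(-\infty,0]$ --- the clean forward statement $d(\g(L),y)=L$, so that it can be set against the non-minimizing forward geodesic produced by Theorem~\ref{Th:GAFA}; and one must check that Theorem~\ref{Th:GAFA}, stated for compact manifolds, persists in the noncompact setting forced by the pole, which it does because $F_{xy}$ is proper on $C(x)$.
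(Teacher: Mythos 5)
The paper itself omits the proof of this Proposition (it refers to \cite{ISS}), so your argument has to be judged on its own terms and against the evident intended route through Theorem \ref{Th:GAFA}. Your Part (1) is correct and is essentially that route: you rightly observe that Theorem \ref{Th:GAFA} must first be carried over from compact to complete manifolds with $C(x)\neq\emptyset$, and your justification (closedness of $C(x)$, properness of $F_{xy}$, Hopf--Rinow for minimizers) is the right one --- indeed \cite{ISS} is stated for complete Riemannian manifolds. The remaining steps of (1) --- exactly one geodesic from the pole $y$ to $x$, reversal (legitimate because $g$ is Riemannian), symmetry of conjugacy, Proposition \ref{Prop:K} to exclude $y\in C(x)$, and the contradiction with the second geodesic produced by Theorem \ref{Th:GAFA} --- are sound.

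Part (2), however, has a genuine gap, located exactly where you yourself place the ``main difficulty'': the passage from the backward-pole hypothesis to the identity $d(\g(L),y)=L$ for every forward geodesic $\g$ emanating from $y$. You obtain it by declaring the reversal of a forward geodesic issuing from $y$ to be a ``backward geodesic with endpoint $y$'' and then invoking the hypothesis. But in Finsler geometry the reverse of an $F$-geodesic is a geodesic of the reverse metric $\bar F(x,u):=F(x,-u)$, not of $F$; and in the standard usage --- which this paper itself follows in \S 5.1, where a backward ray is written $\g:(-\infty,0]\to(M,F)$ with no reversal --- a backward geodesic (ray) ending at $y$ is a genuine $F$-geodesic arriving at $y$ all of whose subsegments are minimizing. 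Under that reading the hypothesis says that $y$ is a pole of the reverse metric: every point of $M$ is joined \emph{to} $y$ by exactly one $F$-geodesic, and that geodesic is minimizing. This constrains geodesics into $y$, not geodesics out of $y$, so your key identity does not follow, and the contradiction you derive from the two geodesics $\g_0,\g_1$ evaporates. Your reading is textually defensible because of the $\si^{-1}$ notation in \S 4.5, but the proof stands or falls with that reading, and you should at least flag the issue rather than treat the reversal as automatically admissible.

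Under the standard reading the intended \cite{ISS}-style argument is the exact analogue of your Part (1), with the backward pole serving as the \emph{target} in place of the reversal of curves: if $y$ is any point with $C(y)\neq\emptyset$ at which $\wC(y)\cap\wJ(y)=\emptyset$, the extended Theorem \ref{Th:GAFA} yields two distinct geodesics from $y$ ending at the backward pole $p$, contradicting the uniqueness of geodesics ending at $p$. This also suggests that the $y$ in statement (2) should be read as an arbitrary point of $M$ (playing the role of $x$ in (1)), rather than as the backward pole itself, which is the role you assign to it; with your assignment the dichotomy is not reachable by these methods.
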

					
					\par\medskip
					\subsection{The continuity of the injectivity radius function}
					We discuss the continuity of injectivity radius functions on  complete Finsler manifolds. The compactification $[0,\infty]:=[0,\infty)\cup\{\infty\}$ of the half line is employed here. \par
					\begin{Lem}\label{Lem:injectivityradius}
						{\rm
							Let $(M,F)$ be a complete Finsler manifold. The injectivity radius function $i:M\to[0,\infty]$ is continuous at every point $x \in M$ where $i(x)<\infty$.
						}
					\end{Lem}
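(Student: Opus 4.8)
The plan is to prove continuity at a point $x$ with $i(x)<\infty$ by establishing lower and upper semicontinuity of $i$ at $x$ separately, organizing both around two elementary facts about $\exp$. Write $B_p(r):=\{v\in T_pM\mid F(p,v)\le r\}$ for the closed Minkowski $r$-ball in $T_pM$, a compact set. \emph{(A)} If $i(p)>r$, then $\exp_p$ restricts to an embedding of $B_p(r)$: indeed $\wU_p=\{tu\mid u\in\Si_p,\ 0\le t<i_p(u)\}\supseteq\{tu\mid 0\le t<i(p)\}$ because $i(p)\le i_p(u)$ for all $u$. \emph{(B)} If $i(p)<r$, then $\exp_p$ has a double point inside the open ball $\{v\mid F(p,v)<r\}$: pick $q\in C(p)$ with $d(p,q)<r$ and a minimizing geodesic $\g_u\colon[0,d(p,q)]\to M$ to $q$, choose $s\in(d(p,q),r)$, note $\g_u|_{[0,s]}$ is not minimizing so $d(p,\g_u(s))<s$, and observe that $su$ and $d(p,\g_u(s))\dot\tau(0)$ (with $\tau$ a minimizing geodesic from $p$ to $\g_u(s)$) are distinct preimages of $\g_u(s)$, both of $F$-length $<r$. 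The contrapositives say: an embedding of $B_p(r)$ forces $i(p)\ge r$, and a double point or non-immersion point of $\exp_p$ inside $B_p(\rho)$ forces $i(p)\le\rho$. Beyond (A) and (B), I will use continuity of $(p,v)\mapsto\bigl(\exp_p v,\,d(\exp_p)_v\bigr)$, the fact that a $C^1$-small perturbation of an embedding of a fixed compact domain is again an embedding (standard, cf.\ \cite{CE}), the Klingenberg Lemma (Proposition~\ref{Prop:K}), and the upper semicontinuity of the first conjugate time along a convergent family of geodesics, which is proved exactly as in the Riemannian case by the second variation / index form argument available through the Riemannian metric $g$ of (\ref{eq:Riemannianmetric}) and the variation formulas of \S3.2.

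\emph{Lower semicontinuity.} Let $x_j\to x$ and fix $r<i(x)$. By (A), $\exp_x$ embeds the compact ball $B_x(r)$. Trivializing $TM$ in a chart around $x$ and using continuity of $\exp$ and $d\exp$, the maps $\exp_{x_j}$ converge in $C^1$ to $\exp_x$ on a fixed compact domain containing $B_{x_j}(r)$ for all large $j$, hence $\exp_{x_j}$ embeds $B_{x_j}(r)$ once $j$ is large; by the contrapositive of (B) this gives $i(x_j)\ge r$ for such $j$, so $\liminf_j i(x_j)\ge r$, and letting $r\uparrow i(x)$ yields $\liminf_j i(x_j)\ge i(x)$.

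\emph{Upper semicontinuity.} Let $x_j\to x$ and fix $\e>0$; I show $i(x_j)\le i(x)+\e$ for $j$ large. As $i(x)<\infty$ we have $C(x)\ne\emptyset$ and $d(x,C(x))=i(x)$, so choose $q\in C(x)$ with $d(x,q)<i(x)+\e/2$ and a minimizing geodesic $\g\colon[0,d(x,q)]\to M$ from $x$ to $q$. By Proposition~\ref{Prop:K}, either (1) $q$ is conjugate to $x$ along $\g$, or (2) a second minimizing geodesic $\si$ from $x$ to $q$ exists. In case (2) put $a:=d(x,q)\dot\g(0)\ne b:=d(x,q)\dot\si(0)$, so $\exp_x a=\exp_x b=q$ with both of $F$-length $<i(x)+\e/2$. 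If, say, $b$ is not a conjugate vector, then $\exp_x$ is a local diffeomorphism near $b$, so for $j$ large $\exp_{x_j}$ maps a fixed neighbourhood of $b$ diffeomorphically onto a neighbourhood of $q$ containing $\exp_{x_j}(a_j)$ (where $a_j\to a$ is the transplant of $a$); solving $\exp_{x_j}(b_j)=\exp_{x_j}(a_j)$ there gives $b_j\to b$, and for $j$ large $a_j\ne b_j$ both lie in $\{F(x_j,\cdot)<i(x)+\e\}$, so $i(x_j)\le i(x)+\e$ by the contrapositive of (A). In the remaining situation — case (1), or case (2) with both $a,b$ conjugate — there is a minimizing geodesic from $x$ to $q$ along which $q$ is conjugate to $x$; since a minimizing geodesic has no interior conjugate points, its first conjugate time is exactly $d(x,q)<i(x)+\e/2$. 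Transplanting and renormalizing the initial velocity, the geodesics $\g_j$ from $x_j$ have first conjugate time $<i(x)+\e$ for $j$ large (upper semicontinuity of the first conjugate time), so $\exp_{x_j}$ fails to be immersive inside $\{F(x_j,\cdot)<i(x)+\e\}$ and again $i(x_j)\le i(x)+\e$. Thus $\limsup_j i(x_j)\le i(x)+\e$; letting $\e\downarrow0$ and combining with the lower bound completes the proof.

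\emph{Expected main obstacle.} The lower semicontinuity and the double-point subcase of (2) are soft. The difficulty concentrates in the conjugate cases: when the nearest cut point is conjugate (or the two minimizing geodesics to it become tangent at $x$ in the limit), the crude transplantation of a double point degenerates, and one must instead control how conjugate points move under perturbation of the geodesic — the one genuinely analytic input, the upper semicontinuity of the first conjugate time, resting on the index form rather than on any transversality. The asymmetry of $d$ forces the whole argument to be phrased through one-sided balls $B_p(r)\subset T_pM$ and the maps $\exp_p$ rather than through symmetric metric balls or geodesic loops, but it introduces no new obstruction, since every infinitesimal comparison takes place in the genuinely Riemannian metric $g$ of the geodesic polar coordinates, where the classical first and second variation machinery applies verbatim.
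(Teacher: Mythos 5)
Your argument is correct, but it is organized as the mirror image of the paper's proof, and the comparison is instructive. The paper proves lower semicontinuity by working at the approximating points: it takes nearest cut points $y_j$ of $x_j$, applies the dichotomy of Proposition~\ref{Prop:K} there, and passes the degeneracy to the limit (a vanishing Jacobian survives the limit; a pair of distinct minimizing geodesics limits to data showing $y\in C(x)$), and it then gets upper semicontinuity by letting the embeddings $\exp_{x_j}|_{\bar B_j(O,\ell+\e)}$ converge to an embedding at $x$. You reverse both mechanisms: lower semicontinuity comes from $C^1$-stability of the single embedding $\exp_x|_{B_x(r)}$ under perturbation of the basepoint, and upper semicontinuity from applying the Klingenberg dichotomy at the limit point $x$ and transplanting the degeneracy outward --- via the implicit function theorem when the second minimizing geodesic is non-conjugate, and via upper semicontinuity of the first conjugate value (index form) in the conjugate case. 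What your route buys: your facts (A)/(B) make explicit the equivalence between ``$\exp_p$ embeds the $r$-ball'' and ``$i(p)\ge r$'' that the paper uses silently at both ends, and your orientation avoids the delicate point in the paper's Case~2, where the two distinct minimizing geodesics $\g_j,\si_j$ could coalesce in the limit (the paper asserts the limits are ``clearly distinct''; if they coincide one has to fall back on a conjugate-point argument). What it costs: you need one extra analytic input, the upper semicontinuity of the first conjugate time under perturbation of initial point and direction, which the paper's proof never uses; this is true and standard, but your pointer to the metric $g$ of (\ref{eq:Riemannianmetric}) is slightly off, since that metric is attached to the fixed basepoint $x$ and does not control geodesics emanating from $x_j$ --- you should instead invoke the Finsler index form (or the osculating Riemannian metric constructed as in \S 3.2 but based at each $x_j$) to justify that step; with that adjustment, and the routine choice of a fixed slightly larger ball $B_x(r')$, $r<r'<i(x)$, as the common compact domain in your $C^1$-stability step, the proof is complete.
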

					\begin{proof}
						{\rm
							Let $x\in M$ and $\{x_j\}_{j=1,2,\dots}\subset M$ be such that $\lim_{j\to\infty}x_j=x$. Let $\{y_j\}_{j=1,2,\dots}\subset M$ be chosen such that $d(x_j,y_j)=i(x_j)=:\ell_j$ for all $j=1,2,\dots$. Let $\g_j:[0,\ell_j]\to(M,F)$ be a minimizing geodesic with $\g_j(0)=x_j$ and $\g_j(\ell_j)=y_j$ for all $j=1,2,\dots$. In view of Proposition \ref{Prop:K}, by choosing subsequence if necessary, we have two cases:\par
							{\bf Case 1}. Assume that $y_j$ is conjugate to $x_j$ along $\g_j$. Setting $v_j:=\ell_j\dot\g_j(0)$, we have
							\begin{equation}\label{eq:*}
							{\rm det}(d(\exp_{x_j})_{v_j})=0,\quad {\rm for\ all \ } j=1,2,\dots     
							\end{equation}  
							Thus we observe that $\lim_{j\to\infty}y_j=y$ is a conjugate point to $x$ along $\g$, where $\g$ is defined by the limit: $v=\dot\g(0):=\lim_{j\to\infty}\dot\g_j(0)$, and hence $i(x)\le F(x,v)=\lim_{j\to\infty}i(x_j)$.\par
							
							{\bf Case 2}. We now assume that there exist minimizing geodesics $\g_j:[0,\ell_j]\to(M,F)$ emanating from $x_j$ and ending at $y_j$ such that $i(x_j)=d(x_j,y_j)$ and $y_j\in C(x_j)$ is not conjugate to $x_j$ along $\g_j$ for all $j=1,2,\dots$. From Proposition \ref{Prop:K}, we get that there are exactly two minimizing geodesics $\g_j, \si_j:[0,\ell_j]\to(M,F)$ such that $\g_j(0)=\si_j(0)=x_j$ and $\g_j(\ell_j)=\si_j(\ell_j)=y_j$. Choosing a subsequence, if necessary, we get limit geodesics $\g:=\lim_{j\to\infty} \g_j$ and $\si:=\lim_{j\to\infty}\si_j$ together with $\ell:=\lim_{j\to\infty}\ell_j$. Clearly $\g$ and $\si$ are distinct minimizing geodesics emanating from $x$ and ending at $y$, and hence $y\in C(x)$.  Therefore
							\[    d(x,y)=\lim_{j\to\infty}\ell_j\ge i(x).     \]
							This proves the lower semi-continuity of the injectivity radius function $i$. \par
							We finally conclude the proof in this case by showing that
							\[        \lim_{j\to\infty}\ell_j\le\ell.      \]
							Suppose to the contrary that there exists a point $x$ and a sequence $\{x_j\}$ converging to $x$ such that 
							\begin{equation}\label{eq:contradiction}
							\lim_{j\to\infty}\,\ell_j>\ell.
							\end{equation}
							Using (\ref{eq:contradiction}) we choose a sufficiently small positive number 
							\[      \e:=\lim_{j\to\infty}(\ell_j-\ell)/2.         \]             
							Let $U_j:=U_{x_j}\subset T_{x_j}M$ be the domain such that $\partial U_j=\wC(x_j)$. There is a large number $j_0$ such that, 
							\[      \bar B_j(O,\ell+\e)\subset U_j,\quad{\rm for\ all}  \ j>j_0.       \]    
							Here we set $\bar B_j(O,r)\subset T_{x_j}M$ an $r$-ball centered at the origin $O\in T_{x_j}M$. Then $\exp_{x_j}|_{\bar B_j(O,\ell+\e)}:\bar B_j(O,\ell+\e)\to B(p_j,\ell+\e)$ is a smooth embedding. From the continuity of $\Pi:TM\to M\times M$, it follows that
							\[  \exp_x|_{U_x}:\bar B(O,\ell+\e)\to B(p,\ell+\e)     \]
							is an embedding, and hence $\ell:=i(x)\ge\ell+\e$, a contradiction.        
						}
					\end{proof}
					
					For the proof of the continuity of the injectivity radius function, where $M$ is non-compact, we now only need to prove that it is continuous at any point where $i(x)=\infty$. This is achieved by the following:
					\begin{Lem}\label{Lem:infinity}
						{\rm
							Let $(M,F)$ be a complete non-compact Finsler manifold. Then the injectivity radius function $i:(M,F)\to[0,\infty]$ is continuous at any point $x\in M$ where $i(x)=\infty$.
						}
					\end{Lem}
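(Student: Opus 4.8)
The plan is to reduce the assertion to a semicontinuity statement and then exploit the stability of embeddings under perturbation of the base point. First I would observe that $i(x)=\infty$ is equivalent to $C(x)=\emptyset$, i.e. to $x$ being a forward pole, which (as recorded in \S 4.5) is in turn equivalent to $\exp_x:T_xM\to(M,F)$ being a diffeomorphism; in particular the maximal embedding domain is $\wU_x=T_xM$. Since the target is the compactified half-line $[0,\infty]$, continuity of $i$ at such an $x$ means exactly that $x_j\to x$ forces $i(x_j)\to\infty$; and for this it is enough to show that for every $R>0$ one has $i(x_j)\ge R$ for all sufficiently large $j$.

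So fix $R>0$. The closed $F$-ball $\bar B(O,R)\subset T_xM$ is compact, and since $\wU_x=T_xM$ it lies in the interior of the maximal embedding domain of $\exp_x$, on which $\exp_x$ restricts to a smooth embedding. Because the exponential map varies smoothly with its base point --- equivalently, by the continuity of $(\Pi,\exp)$ already used in \S 3 and in the proof of Lemma~\ref{Lem:injectivityradius} --- and because being a smooth embedding of a fixed compact ball is an open condition in the $C^1$-topology, there is an $N$ such that for every $j\ge N$ the map $\exp_{x_j}$ restricts to a smooth embedding of $\bar B_j(O,R)\subset T_{x_j}M$ into $M$. It then only remains to note that this forces $i(x_j)\ge R$: if some $u\in\Si_{x_j}$ had $i_{x_j}(u)<R$, then by Proposition~\ref{Prop:K} the cut point $\g_u(i_{x_j}(u))$ is either conjugate to $x_j$ along $\g_u$, contradicting that $\exp_{x_j}$ is an immersion on $\bar B_j(O,R)$, or else it is the endpoint of a second minimizing geodesic of the same length, contradicting that $\exp_{x_j}$ is injective on $\bar B_j(O,R)$. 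Letting $R\to\infty$ gives $i(x_j)\to\infty$.

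The main obstacle is the transfer of the embedding property from the base point $x$ to the nearby base points $x_j$, uniformly in $j$. The immersion part is routine: $d\exp_x$ is nonsingular on the compact set $\bar B(O,R)$, hence $d\exp_{x_j}$ is nonsingular on $\bar B_j(O,R)$ for all large $j$. The global injectivity part is the real work and must be carried out across the varying tangent spaces; the cleanest way is to trivialize $TM$ over a coordinate neighbourhood of $x$ so that all the balls $\bar B_j(O,R)$ and $\bar B(O,R)$ sit inside a single Euclidean ball, and then argue by contradiction --- distinct points $a_j\ne b_j$ in $\bar B_j(O,R)$ with $\exp_{x_j}(a_j)=\exp_{x_j}(b_j)$ would subconverge to a point at which $\exp_x$ fails to be locally injective, which is absurd. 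An alternative route, parallel to the proof of Lemma~\ref{Lem:injectivityradius}, is to suppose that $i(x_j)\to L<\infty$ along a subsequence, extract a limiting cut point $y=\lim y_j$ with $d(x,y)=L$ together with a limiting minimizing geodesic $\g$ satisfying $\exp_x(L\dot\g(0))=y$, and then apply Proposition~\ref{Prop:K}: the conjugate alternative is excluded since $d\exp_x$ is everywhere nonsingular, while a second minimizing geodesic $\si$ from $x$ to $y$ either puts $y$ in $C(x)=\emptyset$ (absurd) or coalesces with $\g$ in the limit and hence violates local injectivity of $(x',u')\mapsto(x',\exp_{x'}u')$ near $(x,L\dot\g(0))$ --- a contradiction in every case.
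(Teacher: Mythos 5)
Your proposal is correct, but your primary argument takes a genuinely different route from the paper's; your sketched ``alternative route'' is, in fact, the paper's proof. The paper argues by contradiction exactly as you describe there: assuming $\lim_j i(x_j)<\infty$ for some $x_j\to x$, it takes points $y_j$ with $d(x_j,y_j)=i(x_j)$ and minimizing geodesics $\g_j$, and applies the dichotomy of Proposition~\ref{Prop:K} along a subsequence --- in the conjugate case the limit $y$ is conjugate to $x$ along $\g=\lim_j\g_j$, and otherwise the pairs $\g_j,\si_j$ converge to two minimizing geodesics from $x$ to $y$, so $y\in C(x)$; either alternative contradicts $i(x)=\infty$. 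Your main route instead works forward from the pole: $\exp_x$ is a diffeomorphism, hence embeds each compact ball $\bar B(O,R)$, and openness of the embedding condition in the $C^1$ topology (after trivializing $TM$ near $x$, as you note) transfers this to $\exp_{x_j}$ on $\bar B_j(O,R)$ for all large $j$, whence Proposition~\ref{Prop:K} gives $i(x_j)\ge R$. Both arguments are sound; the paper's reuses verbatim the machinery of Lemma~\ref{Lem:injectivityradius}, while yours isolates the underlying stability statement and avoids extracting limit cut points. One point in your favour: in the paper's second case the limits $\g=\lim_j\g_j$ and $\si=\lim_j\si_j$ are simply asserted to be distinct, although they could a priori coalesce; your observation that coalescence would violate local injectivity of $(x',u')\mapsto(x',\exp_{x'}u')$ near $(x,L\dot\g(0))$ --- a local diffeomorphism there because $d\exp_x$ is nonsingular --- supplies exactly the missing step. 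Be correspondingly careful in your main route: when $a_j\neq b_j$ converge to the same limit, the contradiction must likewise be phrased for this two-variable map rather than for $\exp_x$ alone, since the base points vary with $j$; you state this correctly in the alternative route, so only the wording of the main route needs that adjustment.
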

					\begin{proof}
						{\rm
							Let $\{x_j\}_{j=1,2,\cdots}$ be a sequence of points converging to $x$. We then prove that 
							\[       \lim_{j\to\infty}\,i(x_j)=\infty.           \]
							Suppose contrary that there exists a sequence of points $\{x_j\}$ converging to $x$ such that $\lim_{j\to\infty}\,i(x_j)<\infty$.\par
							The same notations as in the previous Lemma \ref{Lem:injectivityradius} will be used. Let $y:=\lim_{j\to\infty}y_j$, where $y_j$, for every $j$, is conjugate to $x_j$ along $\g_j$. We observe that $y$ is conjugate to $x$ along $\g:=\lim_{j\to\infty}\g_j$. However this is a contradiction to $i(x)=\infty$.\par
							Now suppose  that $y_j$ for each $j$ is not conjugate to $x_j$ along $\g_j$. We then have two minimizing geodesics $\g_j, \si_j:[0,\lim_{j\to\infty}\ell_j]\to(M,F)$ joining $x_j$ to $y_j$. If $\g:=\lim_{j\to\infty}\g_j$ and $\si:=\lim_{j\to\infty}\si_j$, then $\g$ and $\si$ are distinct minimizing geodesics from $x$ to $y:=\lim_{j\to\infty} y_j$. Therefore $y\in C(x)$, contradicting to $C(x)=\emptyset$.
						}
					\end{proof}
					\subsection{Pointed Blaschke manifolds}\par
					The Riemannian Blaschke manifolds\index{Riemannian Blaschke manifold} have been fully investigated by Berger and his colleagues and the findings are summarized in \cite{Besse}.
					Instead of setting down the curvature assumption, a certain restriction on the diameter and injectivity radius of a compact Finsler manifold
					is proposed in this respect. Let $(M,F)$ be a compact Finsler $n$-manifold. We have discussed the Finsler version of the fundamental properties of cut locus and conjugate locus. The diameter $d(M)$ of $(M,F)$ is defined by
					\[      d(M):=\max\,\{d(x,y)\,|\, x, y\in M\}.           \]
					The injectivity radius $i(M)$ of $(M,F)$ is defined by
					\[      i(M):=\min\,\{i(x)\,|\, x\in M\}.       \]
					
					\begin{Def}
						{\rm 
							A Finsler manifold $(M,F)$ is called  a {\it Blaschke Finsler manifold} if 
							\begin{equation}\label{eq:BFM}
							d(M)=i(M),
							\end{equation}              
							and $(M,F)$ is called a {\it pointed Blaschke manifold with a base point at $x\in M$} if  
							\begin{equation}\label{eq:pBFM}
							i(x)=\max\,\{d(x,y)\,|\, y\in M\}.     
							\end{equation}
							Such a pointed Blaschke manifold with a base point $x$ is denoted by $(M,F:x)$.     
						}      
					\end{Def}
					We refer to \cite{Besse} for a discussion on Riemannian Blaschke manifolds. Clearly (\ref{eq:pBFM}) holds at each point of $M$ if (\ref{eq:BFM}) is satisfied. A classical result by Berger and Klingenberg states that if $(M,g)$ is a compact simply connected Riemannian manifold whose sectional curvature ranges over $[1/4,1]$,
					then $i((M,g))\ge\pi$; see \cite{B1}, \cite{B2}, and \cite{Klingenberg1}. Moreover $M$ is homeomorphic to $\bS^n$ if $d((M,g))>\pi$, and isometric to one of the symmetric spaces of  compact type if $d((M,g))=i(M,g))=\pi$.
					\par\medskip                       
					We set, for simplicity, $\ell:=i(x)$ for a pointed Blaschke Finsler manifold $(M,F:x)$. Then every cut point $y\in C(x)$ has the property that 
					$d(x,y)=\ell$ and that $y$ is the farthest point from $x$. Therefore we have the assumptions in Proposition \ref{Prop:K} and Lemma \ref{Lem:BO}.  Let 
					$y\in C(x)$ and set
					\[    \G_{xy}:=\{ \g:[0,\ell]\to(M,F:x)\,|\, \g(0)=x,\;\g(\ell)=y\},     \]
					and further set
					\[        A_{xy}:=\{ \dot\g(\ell)\,|\, \g\in\G_{xy}\}.       \]
					We then observe from Lemma \ref{Lem:foot} that 
					\begin{equation}\label{eq:fibering}
					\partial B(x,r)=\partial B^{-1}(C(x),\ell-r), \quad\text{for all}\quad  r\in(0,\ell).
					\end{equation}   
					
					The discussion on pointed Blaschke Finsler manifolds is divided into two
					cases, according to whether the manifold is simply connected or non-simply connected. We first discuss a simpler case, roughly speaking, where $(M,F:x)$ does not satisfy the Rauch conjecture at $x$. 
					\begin{Lem}\label{Lem:loop}
						{\rm
							Assume that the Rauch conjecture is not valid at the base point $x\in(M,F:x)$, i.e., $\wC(x)\cap\wJ(x)=\emptyset$. Then every point $y\in C(x)$ is joined to $x$ by exactly two distinct minimizing geodesics
							\[   \g, \si:[0,\ell]\to(M,F:x),\; \g(0)=\si(0)=x,\; \g(\ell)=\si(\ell)=y,   \]
							such that $y$ is not conjugate to $x$ along them.  
						}
					\end{Lem}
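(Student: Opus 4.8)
The plan is: (i) show that, under the Rauch-failure hypothesis, no point $y\in C(x)$ is conjugate to $x$ along any minimizing geodesic; (ii) deduce from the Klingenberg Lemma that at least two minimizing geodesics join $x$ to each $y\in C(x)$; (iii) rule out three or more by playing the Berger--Omori Lemma against $\wC(x)\cap\wJ(x)=\emptyset$.

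First I would unpack the ``pointed Blaschke'' hypothesis. By \eqref{eq:pBFM}, $\ell:=i(x)=\max\{d(x,y):y\in M\}$, and since $i_x(u)\le\max_y d(x,y)=\ell$ while $i_x(u)\ge i(x)=\ell$, we get $i_x(u)=\ell$ for every $u\in\Si_x$. Hence $\wC(x)=\ell\,\Si_x$ is a compact hypersurface of $T_xM$, every $y\in C(x)$ satisfies $d(x,y)=\ell$, and along every minimizing geodesic $\g:[0,\ell]\to M$ from $x$ to $y$ the point $y$ is exactly the cut point $\g(i_x(\dot\g(0)))$. For step (i): if $u:=\dot\g(0)$ and $y$ were conjugate to $x$ along $\g$, then $c_x(u)\le\ell=i_x(u)$; combined with $c_x(u)\ge i_x(u)$ from \eqref{eq:cutconjugatelocus} this forces $c_x(u)=\ell$, so $\ell u\in\wC(x)\cap\wJ(x)$, contradicting the hypothesis. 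Thus $y$ is conjugate to $x$ along no minimizing geodesic; equivalently, $\det d(\exp_x)|_{\ell u}\neq 0$ for every initial direction $u$ of such a geodesic. Step (ii) is then immediate: fixing $y\in C(x)$ and a minimizing geodesic $\g$ from $x$ to $y$, alternative (1) of Proposition \ref{Prop:K} fails by step (i), so alternative (2) supplies a second minimizing geodesic $\si\neq\g$ from $x$ to $y$.

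For step (iii) --- the heart of the argument --- I would show that any two distinct minimizing geodesics $\g_0,\g_1$ from $x$ to $y$ have linearly dependent velocities at $y$. Suppose not. Since $d(x,y)=i(x)$, the hypotheses of the Berger--Omori Lemma \ref{Lem:BO}, in its Finsler form (see \cite{IINS}), are met, so there is a family $\{\g_t\}_{t\in[0,1]}$ of minimizing geodesics from $x$ to $y$ with $y$-velocities given by \eqref{eq:luna}; linear independence of $\dot\g_0(\ell),\dot\g_1(\ell)$ makes $t\mapsto\dot\g_t(\ell)$ injective, so the $\g_t$, and hence their initial velocities, are pairwise distinct. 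Then $S:=\{\ell\,\dot\g_t(0):t\in[0,1]\}$ is an uncountable subset of the compact set $\wC(x)=\ell\,\Si_x$, so it accumulates at some $v_*\in\wC(x)$; by continuity of $\exp_x$ we have $\exp_x v_*=y$, and since $v_*/\ell\in\Si_x$ the curve $s\mapsto\exp_x(s\,v_*/\ell)$ is a minimizing geodesic from $x$ to $y$, whence $d(\exp_x)|_{v_*}$ is nonsingular by step (i) and $\exp_x$ is injective on a neighbourhood of $v_*$. This contradicts the existence of infinitely many distinct points of $S$ converging to $v_*$ and all mapped by $\exp_x$ to $y$. Therefore $\dot\g_0(\ell)$ and $\dot\g_1(\ell)$ are linearly dependent, and being distinct unit vectors they are opposite. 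Consequently the $y$-velocity of any minimizing geodesic from $x$ to $y$ lies in $\{\dot\g_0(\ell),-\dot\g_0(\ell)\}$, and since a geodesic is determined by an endpoint together with the velocity there, at most two such geodesics exist. Together with step (ii) this yields exactly two, and by step (i) neither realizes a conjugate point --- which is the assertion.

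The main obstacle I anticipate is step (iii): invoking the Finsler Berger--Omori Lemma correctly and turning ``a one-parameter family of minimizing geodesics with the same endpoints'' into a contradiction. The compactness/accumulation device above is the clean way to do this, since it avoids any discussion of smoothness of $t\mapsto\g_t$ and uses only that, at a point of $\wC(x)$ coming from a minimizing geodesic, the absence of conjugate points makes $\exp_x$ a local diffeomorphism there.
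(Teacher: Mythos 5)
Your proof is correct and follows essentially the route the paper intends: the paper omits the argument, saying only that the Lemma is a direct consequence of the Berger--Omori Lemma, and your steps (i)--(iii) --- no conjugacy at cut points from $\wC(x)\cap\wJ(x)=\emptyset$, Klingenberg's alternative (Proposition 4.2) to get at least two minimizing geodesics, and the Finsler Berger--Omori Lemma combined with local injectivity of $\exp_x$ at the nonsingular point $v_*$ to cap the count at two --- supply exactly the details the paper leaves out. The only cosmetic caveat is in your last step: in a Finsler manifold ``opposite'' should read ``negatively proportional,'' since $-\dot\g_0(\ell)$ need not be an $F$-unit vector; the at-most-two count is unaffected, because each of the two rays of the line spanned by $\dot\g_0(\ell)$ contains exactly one $F$-unit vector, so at most two terminal velocities, hence at most two minimizing geodesics, are possible.
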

					Lemma \ref{Lem:loop} is a direct consequence of Lemma \ref{Lem:BO} of Berger-Omori. Its proof is omitted. We observe from Lemma~\ref{Lem:loop} that there exists a fixed-point free involution $\psi$ on $\Si_x$ such that $\psi(\dot\g(0))=\dot\si(0)$. Clearly we have 
					\[ \exp_x\,\ell u=\exp_x\,\ell\psi(u)\in C(x), \quad\text{for all}\quad  u\in\Si_x. \]   
					\par\medskip
					Summing up the above discussion we have the following topological conclusion. \par            
					\begin{Th}\label{Th:simplestcase}
						{\rm
							Let $(M,F:x)$ is a pointed Blaschke-Finsler manifold with base point $x$. If the Rauch conjecture is not valid at the base point $x\in(M,F:x)$, then we have
							\begin{enumerate}
								\item the cut locus to $x$ is a smooth hypersurface and diffeomorphic to the quotient space $\Si_x/\{\psi : \psi^2={\rm Id.}\}$, i.e., the cut locus is homeomorphic to a real projective space;
								\item the universal cover $\widetilde M$ of $M$ is homeomorphic to $\bS^n$ and $M$ is homeomorphic to the real projective space;
								\item the fundamental group of $M$  is cyclic of order two.
							\end{enumerate}
						}
					\end{Th}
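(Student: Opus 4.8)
The plan is to read off the three conclusions from the geometric input of Lemma~\ref{Lem:loop} together with the description of $M$ as a quotient of a closed $n$-ball coming from $\exp_x$. First I would record that, since $(M,F:x)$ is pointed Blaschke, $i(x)=\max\{d(x,y)\,|\,y\in M\}=:\ell$, so along every $u\in\Si_x$ the cut point occurs \emph{exactly} at $t=\ell$; hence $\wC(x)=\{\ell u\,|\,u\in\Si_x\}$ is a smoothly embedded $(n-1)$-sphere bounding the closed topological $n$-ball $\overline{\wU_x}=\{tu\,|\,u\in\Si_x,\ 0\le t\le\ell\}$. The hypothesis $\wC(x)\cap\wJ(x)=\emptyset$ says no point of $\wC(x)$ is conjugate, so $\exp_x$ is a local diffeomorphism on a neighbourhood of $\overline{\wU_x}$. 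By Lemma~\ref{Lem:loop} every $y\in C(x)$ is the image of exactly two points of $\wC(x)$, namely $\ell u$ and $\ell\psi(u)$; combined with the local-diffeomorphism property and compactness of $\wC(x)$ this makes $\exp_x|_{\wC(x)}\colon\wC(x)\to C(x)$ a smooth two-fold covering whose nontrivial deck transformation is the fixed-point-free involution $\psi$. Because $i_x\equiv\ell$ is \emph{constant} on $\Si_x$, near $y=\exp_x(\ell u)$ the two local branches $\exp_x(\ell\cdot\text{nbhd of }u)$ and $\exp_x(\ell\cdot\text{nbhd of }\psi(u))$ are the \emph{same} set, so $C(x)$ is a genuine smooth embedded hypersurface, diffeomorphic to $\Si_x/\psi\cong S^{n-1}/\psi$, which is a real projective space; this is (1).

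For (2) and (3) I would pass to a double cover. The restriction $\exp_x|_{\wU_x}$ is an embedding onto $M\setminus C(x)$, so that set is an open $n$-cell, while $\exp_x\colon\overline{\wU_x}\to M$ is onto, injective on the interior, and on the boundary identifies $v$ with $\psi(v)$ and with nothing else; thus $M$ is homeomorphic to $D^n/(v\sim\psi(v),\ v\in\partial D^n)$, the cell pattern of $\R P^n$. Now let $\tM$ be two copies of $\overline{\wU_x}$ glued along their boundary spheres by $\psi$; sending each copy to $M$ by $\exp_x$ exhibits $\tM\to M$ as a two-fold covering. As $\tM$ is two closed $n$-balls glued along their boundary sphere, van Kampen gives $\pi_1(\tM)=1$ and Mayer--Vietoris gives $\tM$ the homology of $S^n$, so $\tM$ is a homotopy $n$-sphere and hence homeomorphic to $\bS^n$ by the Poincar\'e conjecture. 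Being simply connected, $\tM$ is the universal cover of $M$, so $\pi_1(M)$ is the deck group $\mathbb{Z}/2\mathbb{Z}$, which is (3); and $M\cong\bS^n/\psi\cong\R P^n$, which is (2).

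The geometric substance is entirely packaged in Lemma~\ref{Lem:loop}, so the real work has two faces. The first is upgrading ``exactly two minimizing geodesics, neither conjugate'' to the assertion that $\exp_x|_{\wC(x)}$ is an honest covering map: here the non-conjugacy supplies the local diffeomorphism, while compactness of $M$ and the Blaschke identity $i_x\equiv\ell$ supply the uniformity that forces the map to be proper, two-to-one, and (crucially for smoothness of $C(x)$) to have its two local branches coincide. The second, and the point I expect to be delicate, is the purely topological identification: recognizing $\tM$ as $\bS^n$ is safe in every dimension (all twisted spheres are homeomorphic to the standard one), but the identification of the free quotient $\bS^n/\psi$ with $\R P^n$ is unconditional only for $n\le 3$ (Alexander's theorem, Perelman, and the classification of free involutions on $\bS^2$ and $\bS^3$); in higher dimensions the homeomorphism statements in (1) and (2) should be read in the light of the existence of fake projective spaces, the homotopy-theoretic versions of (1)--(3) being in any case unconditional.
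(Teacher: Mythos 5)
Your proposal follows essentially the route the paper itself intends: the paper gives no written proof beyond ``summing up the above discussion,'' relying only on Lemma~\ref{Lem:loop} and the resulting fixed-point-free involution $\psi$ on $\Si_x$ with $\exp_x\ell u=\exp_x\ell\psi(u)$, and your elaboration (constancy $i_x\equiv\ell$ making $\wC(x)$ the round $\ell$-sphere, the two local sheets of $C(x)$ coinciding so that $\exp_x|_{\wC(x)}$ is a smooth double covering onto $C(x)\cong\Si_x/\psi$, the ball-quotient picture $M\cong D^n/(v\sim\psi(v))$, the glued double cover $\tM$, the twisted-sphere and van Kampen arguments giving $\tM\cong\bS^n$ and $\pi_1(M)\cong\mathbb Z/2$) supplies exactly the details the authors omit. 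Your closing caveat is also well taken: this argument --- and the paper, which offers nothing further --- identifies $C(x)$ and $M$ only as quotients of spheres by free involutions, i.e.\ as homotopy projective spaces, so the literal ``homeomorphic to a real projective space'' in (1)--(2) is unconditional only in low dimensions where exotic free involutions on spheres are excluded, whereas (3) and the homeomorphism $\tM\cong\bS^n$ hold as you state them.
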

					\begin{Rem}
						{\rm
							The assumption in Theorem \ref{Th:simplestcase} is too strong. In fact we prove $\wC(x)\cap\wJ(x)=\emptyset$ if $(M,F:x)$ satisfies
							\[   \wC(x)\setminus\wJ(x)\neq\emptyset.       \]
						}
					\end{Rem}
					\noindent In the Riemannian case, $\g$ and $\si$ together form a simple closed geodesic loop at $x$, and $\wJ(x)$ is a $2\ell$-sphere and $J(x)=\{x\}$.
					\medskip
					In view of Theorem \ref{Th:simplestcase}, we observe that if $(M,F:x)$ is a simply connected pointed Blaschke Finsler manifold, then the Rauch conjecture is valid at $x$. Moreover, if $(M,F:x)$ is simply connected, we have
					\[    \wC(x)=\wJ(x).     \]
					The Berger-Omori Lemma \ref{Lem:BO} implies that $A_{xy}$ is a convex set. Moreover the multiplicity of the conjugate point $y$ to $x$ along a minimizing geodesic $\g:[0,\ell]\to M$ with $\g(0)=x$, $\g(\ell)=y$ is independent of the choice of $\g\in\G_{xy}$. Since the dimension of the convex sets $A_{xy}$ is lower semi-continuous on $C(x)$, it is constant on $C(x)$. Therefore the rank of the exponential map $\exp_x$ at each point of $C(x)$ is constant. Hence
					the implicit function theorem implies that $\exp_x\,\wC(x)=C(x)$ is a compact smooth submanifold of $M$. Thus the set of all points on minimizing geodesics belonging to $\G_{xy}$ forms a $k$-dimensional submanifold homeomorphic to $\bS^k$, where $k$ is the dimension of $A_{xy}$, $y\in C(x)$. It follows from the relation (\ref{eq:fibering}) that
					$B(x,r)$ for $r\in(0,\ell)$ is simply covered by $(k-1)$-dimensional spheres and its quotient space is nothing but $C(x)$. We still have much more discussion to complete this case.\par\medskip

					\section{Busemann functions and convex functions}\par\medskip
					
					\subsection{Busemann functions}\index{Busemann function}
					We discuss forward rays and forward Busemann functions on complete non-compact Finsler manifolds. \par
					The  definition of a Busemann function is found in \S 22 of \cite{Busemann}.  A forward Busemann function $F_\g:(M,F)\to\R$ for a ray $\g:[0,\infty)\to(M,F)$ is defined as follows:
					\[     F_\g(x):=\lim_{t\to\infty}(t-d(x,\g(t))), \quad x\in M.     \]
					A backward ray $\g:(-\infty,0]\to(M,F)$ and a backward Busemann function for the backward ray $\g$ are similarly defined. A {\it super Busemann function\index{Super Busemann function}\index{Busemann function!super} $F_x:(M,F)\to\R$ at $x$} is defined by
					\[   F_x(y):=\sup\,\{\,F_\g(y)\,|\,\text{$\g$ is a forward ray with 
						$\g(0)=x$}\},\quad y\in M.   \] \par 
						Clearly, the function $t\to (t-d(x,\g(t)))$ is monotone increasing in $t$ and bounded above by $d(x,\g(0))$. Thus $F_\g$ is well defined, for $t-d(x,\g(t))$ converges uniformly on a compact set and $F_\g$ is locally Lipschitz continuous. A unit speed forward ray $\si:[0,\infty)\to(M,F)$ is by definition {\it asymptotic to $\g$} if there exists a sequence of unit speed minimizing geodesics $\{\si_j:[0,\ell_j]\to(M,F)\}_{j=1,2,_{\cdots}}$ such that $\lim_{j\to\infty}\dot\si_j(0)=\dot\si(0)$, $\si_j(\ell_j)=\g(t_j)$ for a monotone divergent sequence $\{t_j\}$. 
						The asymptotic relation is in general neither symmetric nor transitive. If $(M,g)$ is a complete and simply connected Riemannian manifold of non-positive sectional curvature, then the asymptotic relation between two rays $\a, \beta:[0,\infty)\to(M,g)$ satisfies the following inequality:
						\[     d(\a(t),\beta(t))\le d(\a(0),\beta(0)),\quad\text{for all $t\ge 0$}.   \]
						Therefore only in this case the asymptotic relation is an equivalence relation.\par\medskip   
						The sequence of points $\{\si_j(0)\}_{j=1,2,_{\cdots}}$ cannot be chosen to be a point $\si(0)$, as is seen in the following example.
						\begin{Ex}
							{\rm
								Let $\mathcal F\subset\mathbb R^3$ be a rotation surface of parabola in a Euclidean $3$-space. Let $\{(r,\theta)\,|\, r>0, \theta\in[0,2\pi)\}$ be the geodesic polar coordinate system around the pole $(0,0)$, and $\g_\theta:[0,\infty)\to\mathcal F$ for $\theta\in[0,2\pi]$ be the meridian $\g_\theta(r):=(r,\theta)$, $r\ge 0$. We observe that all the meridians are asymptotic to each other. In fact, let $\theta_0\in[0,2\pi)$ be an arbitrary fixed number and $\{\theta_j\}_{j=1,2,_{\cdots}}\subset[0,2\pi)$ be a monotone sequence with $\lim_{j\to\infty}\theta_j=\theta_0$. Let $\{r_j\}_{j=1,2,_{\cdots}}$ be a monotone decreasing sequence of positive numbers with $\lim_{j\to\infty}\,r_j=0$. If we set $\g_j(t):=\g_{\theta_j}(t+r_j),\, t>0$, then $\g_j$ for each $j=1,2,_{\cdots}$ is asymptotic to $\g_0$, and hence so is $\g_{\theta_0}=\lim_{j\to\infty}\g_j$.   
							}
						\end{Ex}
						
						Assuming that a ray $\si:[0,\infty)\to(M,F)$, $y:=\si(0)$ is asymptotic to another ray $\g:[0,\infty)\to(M,F)$, $x:=\g(0)$, we say that $\si$ is a {\it maximal asymptotic ray to $\g$ if $\si$ is not properly contained in any ray which is asymptotic to $\g$}. We also say that a ray is {\it maximal} if and only if it is not properly contained in any ray. A long-standing open problem proposed by Busemann in \cite{BI} is stated as follows:\par
						\begin{Pb}
							{\rm
								Is a maximal asymptotic ray a maximal ray?
							}
						\end{Pb}   
						
						This problem was solved in the negative by Innami in \cite{Innami3} by exhibiting an example of a surface in $\mathbb R^3$ on which there is a maximal asymptotic ray which is not a maximal ray.\par 
						The local Lipschitz property (1) in Proposition \ref{Prop:Busemann} of $F_\g$ implies that it is differentiable almost everywhere. Then (6) in Proposition \ref{Prop:Busemann} shows that $F_\g$ is differentiable at an interior point of some asymptotic ray to $\g$. Let $\si(0)$ be the initial point of a maximal asymptotic ray to $\g$. If there exists a unique asymptotic ray to $\g$ passing through $\si(0)$, we may view $\g(0)$ and $\g(\infty)$ as being conjugate pair along $\g$, (and this corresponds to (1) in Proposition \ref{Prop:K}). Otherwise, there exists another ray $\si_1:[0,\infty)\to(M,F)$ which is asymptotic to $\g$. Therefore we may view the set of all the initial points of rays asymptotic to $\g$ as the cut locus to a point at infinity obtained by $\g(\infty)$, (and this corresponds to (2) in Proposition \ref{Prop:K}). If $F_\g$ attains its minimum at a point $x\in(M,F)$, then there exists for every unit vector $u\in\Si_x$ a ray $\si:[0,\infty)\to(M,F)$ asymptotic to $\g$ such that $g_u(u,\dot\si(0))\ge 0$. This corresponds to Lemma 4.2. \par\medskip
						\subsection{Properties of Busemann functions on $(M,F)$}
						We denote by $F^{-1}_\g(\{a\})$ and $F^{-1}_\g(-\infty,a]$ the $a$-level set and the $a$-sublevel set of $F_\g$ respectively. The basic properties of Busemann functions\index{Busemann function} are stated in \S\S 22 and  23 of \cite{Busemann} and those on  complete Finsler and Riemannian manifolds $(M,F)$ are summarized in \cite{Shiohama0} and \cite{Shiohama} as follows: \par
						\begin{Prop}[Properties of Busemann functions]\label{Prop:Busemann}\par
							{\rm
								Let $\g:[0,\infty)\to(M,F)$ be a forward ray and $F_\g:(M,F)\to\mathbb{R}$ a Busemann function for $\g$. We then have:
								\begin{enumerate}
									\item $F_\g$ is locally Lipschitz.
									\item A level set $F^{-1}_\g(\{a\})$ for $a\in F_\g(M)$ is obtained by     
									\[      F^{-1}_\g(\{a\})=\lim_{t\to\infty}S^{-1}(\g(t),t-a).         \]
									\item If $a,b\in F_\g(M)$ satisfies $a\le b$, then
									\[    F^{-1}_\g(-\infty,a]=\{y\in F_\g^{-1}(-\infty,b]\,|\, d(y,F^{-1}_\g(\{b\})\ge b-a\},  \]
									and 
									\begin{gather*}
										F^{-1}_\g(\{a\})=\{y\in F^{-1}_\g(-\infty,b])\,|\, d(y,F^{-1}_\g(\{b\}))=b-a\} \\
										=S^{-1}(F^{-1}_\g(\{b\}),b-a)\cap F^{-1}_\g(-\infty,b].
									\end{gather*}
									\item A unit speed geodesic $\si:[0,\infty)\to(M,F)$ is a forward ray asymptotic to $\g$ if and only if 
									\[   F_\g\circ\si(t)=t+F_\g\circ\si(0),\quad {\rm for\ all}\ t\ge 0.   \]
									\item If $x\in M$ and $a\in F_\g(M)$ satisfy $a>F_\g(x)$, and if $\si:[0,\ell]\to(M,F)$ is a unit speed minimizing geodesic with $\si(0)=x$ such that $\si(\ell)$ is a foot of $x$ on $F^{-1}_\g(\{a\})$, then the 
									extension $\si:[0,\infty)\to(M,F)$ of $\si$ is a forward ray asymptotic to $\g$.
									\item $F_\g$ is differentiable at a point $x\in M$ if $x$ is an interior of some ray asymptotic to $\g$. 
								\end{enumerate}
							}
						\end{Prop}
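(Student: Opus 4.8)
{\rm
The plan rests on one elementary observation followed by repeated use of the triangle inequality; only item (6) will need real work. Throughout I fix the ray $\g$ and abbreviate $b_t(x):=t-d(x,\g(t))$. The triangle inequality $d(x,\g(t'))\le d(x,\g(t))+(t'-t)$ for $t<t'$ shows that $t\mapsto b_t(x)$ is nondecreasing, while $t=d(\g(0),\g(t))\le d(\g(0),x)+d(x,\g(t))$ bounds it above by $d(\g(0),x)$; hence $F_\g(x)=\lim_{t\to\infty}b_t(x)$ exists. On a compact set $C$ the estimate $|d(x,\g(t))-d(z,\g(t))|\le\lambda(C)\,d(x,z)$ makes the family $\{b_t\}$ equi-Lipschitz, and equi-Lipschitz functions converging pointwise converge locally uniformly to a function with the same Lipschitz constant, which is (1).

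Next I would prove (2), (4) and (5), and then read off (3). For (2), $y\in F^{-1}_\g(\{a\})$ says $d(y,\g(t))=t-a+o(1)$, i.e.\ $y$ lies within $o(1)$ of $S^{-1}(\g(t),t-a)$, and the monotonicity of $b_t$ promotes this to the asserted convergence of sets. For (4), along a unit-speed geodesic $\si$ one always has $b_\tau(\si(t))-b_\tau(\si(0))\le d(\si(0),\si(t))=t$, hence $F_\g\circ\si(t)\le F_\g\circ\si(0)+t$; I would then show that equality for every $t$ is exactly the asymptoticity condition, the nontrivial ($\ge$) direction being obtained by evaluating $b_{t_j}$ at $\si_j(t)$ --- which equals $b_{t_j}(\si_j(0))+t$ since $\si_j|_{[t,\ell_j]}$ is minimizing --- and letting $j\to\infty$ with the Lipschitz control of (1), while the converse direction follows from a first-variation comparison of the minimizers from $\si(0)$ to $\g(t)$ with $\si|_{[0,s]}$. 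Item (5) is then immediate: a foot $\si(\ell)$ of $x$ on $F^{-1}_\g(\{a\})$ satisfies $\ell=d(x,F^{-1}_\g(\{a\}))=a-F_\g(x)$, and splicing $\si$ with an asymptotic ray issuing from $\si(\ell)$ forces $F_\g$ to rise at unit rate along all of $\si$, so $\si$ is asymptotic by (4). Finally (3) is bookkeeping from (4)--(5): $F_\g(y)=a\le b$ holds iff $y\in F^{-1}_\g(-\infty,b]$ and one can climb from $y$ to $F^{-1}_\g(\{b\})$ along an asymptotic ray of length exactly $b-a$, and the inequality $\ge b-a$ characterizes the sublevel set because $F_\g$ is $1$-Lipschitz along every geodesic. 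The Hopf--Rinow theorem of \S 2.3 provides the minimizers used above, and the only Finsler-specific care is to keep forward and backward distances apart and to invoke $\lambda(C)$ wherever symmetry is needed.

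The hard part will be (6). I would pass to the geodesic polar coordinates of \S 3.2 about $x$, endow a fixed compact neighbourhood $K$ of $x$ with the associated reference Riemannian metric $g$, and observe that for $t$ large each $b_t=t-d(\cdot,\g(t))$ is, on $K$, a local infimum of finitely many smooth functions (the distinct geodesic branches toward the distant point $\g(t)$) whose Hessians are bounded above; hence the $b_t$ are uniformly semiconcave on $K$, and, a second-difference bound passing to pointwise limits, so is $F_\g$. This already gives a nonempty superdifferential of $F_\g$ at every point and reduces differentiability at $x$ to showing that this superdifferential is a singleton there. The geometric heart is the uniqueness statement: if $x$ is an interior point of an asymptotic ray $\si$ to $\g$, then $\si$ is the only asymptotic ray through $x$, because a second one, spliced at a point $\si(s_0-\e)$ slightly before $x$, would produce a broken competitor that can be shortened, contradicting that $F_\g$ rises at unit rate along every asymptotic ray. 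Since the asymptotic rays through $x$ exhaust the superdifferential of $F_\g$ at $x$ (their initial directions being the only candidates for the direction of growth, with one-sided derivative $+1$ by (4)), uniqueness of $\si$ collapses it to one covector and differentiability follows. I expect the two delicate points to be making the semiconcavity estimate for $b_t$ uniform in $t$ --- which genuinely relies on Riemannian comparison along the long geodesics to $\g(t)$ and not on any Finsler curvature hypothesis --- and transplanting the ``singleton superdifferential $\Rightarrow$ differentiable'' principle to the non-symmetric setting; these are where the asymmetry of $F$ bites hardest, and where I would follow \cite{Shiohama0} and \cite{Shiohama}.
}
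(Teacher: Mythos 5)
The paper gives no proof of this proposition: it is stated with a pointer to \cite{Shiohama} and \cite{Shiohama0}, so your proposal can only be compared with the standard arguments used there. Your treatment of (1)--(3) and (5) is in that standard spirit (monotonicity and equi-Lipschitz bounds for $t-d(\cdot,\g(t))$, feet on level sets, splicing with an asymptotic ray and using that the Busemann increment is dominated by distance to see that the spliced curve is minimizing, hence a cornerless geodesic). But the converse half of (4) is genuinely gapped as written: a ``first-variation comparison of the minimizers from $\si(0)$ to $\g(t)$ with $\si|_{[0,s]}$'' does not do the job, because those minimizers may converge to a \emph{different} asymptotic ray from $\si(0)$, and the paper's definition of asymptotic ray requires approximating minimizers whose initial \emph{vectors} converge to $\dot\si(0)$, not merely curves starting at $\si(0)$. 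The standard repair: take an asymptotic ray $\tau$ from $\si(s)$; since $F_\g$ rises at unit rate along both $\si|_{[0,s]}$ and $\tau$, the concatenation realizes the distance between its endpoints, hence is a geodesic without corner, so $\tau$ continues $\si$; thus $\si|_{[s,\infty)}$ is asymptotic for every $s>0$, and a diagonal choice $s=1/m\to 0$ of the approximating minimizers gives initial vectors tending to $\dot\si(0)$, which is exactly the definition (this is also precisely why the definition allows varying initial points).

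For (6) you choose a much heavier route than necessary, and its two pillars are not established. Uniform (in $t$) semiconcavity of $d(\cdot,\g(t))$ on a compact set is a comparison-type estimate; in the Finsler setting of this paper there are no curvature hypotheses, and you give no argument that the relevant constants can be localized near the compact set. More seriously, the bridge ``the superdifferential at $x$ is exhausted by initial directions of asymptotic rays, and the asymptotic ray through an interior point is unique'' is asserted rather than proved; the uniqueness statement is a genuine Busemann-type theorem, and your one-line shortening sketch is not a proof of it. None of this machinery is needed. At an interior point $x=\si(s_0)$ of an asymptotic ray, pick $s_2<s_0<s_1$ and sandwich $F_\g$ between the functions $h^+(y):=F_\g(\si(s_2))+d(\si(s_2),y)$ and $h^-(y):=F_\g(\si(s_1))-d(y,\si(s_1))$. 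The directed Lipschitz bound $F_\g(y)-F_\g(z)\le d(z,y)$ gives $h^-\le F_\g\le h^+$, item (4) gives equality at $x$, both functions are smooth near $x$ because $x$ is an interior point of the minimizing segment $\si|_{[s_2,s_1]}$ (so it is neither a forward cut point of $\si(s_2)$ nor a backward cut point of $\si(s_1)$, nor conjugate), and their differentials at $x$ coincide with the Legendre dual $g_{\dot\si(s_0)}(\dot\si(s_0),\cdot)$. The squeeze then yields differentiability of $F_\g$ at $x$ directly, with no semiconcavity and no uniqueness theorem; this support-function argument is the technique of \cite{Shiohama} and \cite{Shiohama0} to which the paper defers.
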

						The detailed proof of Proposition \ref{Prop:Busemann} on Riemannian manifolds was given in \cite{Shiohama} and the same proof technique for the Finsler case is seen in \cite{Shiohama0}. The proof is omitted here.
						\par\medskip
						In the pioneering works~\cite{GM} and \cite{CG}, the authors have proved that a Busemann function\index{Busemann function} on a complete and non-compact Riemannian manifold $(M,g)$ is strongly convex if its sectional curvature is positive (see~\cite{GM}) and convex if its sectional curvature is non-negative (see~\cite{CG}). In particular, every super Busemann function is a convex exhaustion if its sectional curvature is non-negative. If the minimum set of a super Busemann function has non-empty boundary, then the negative of the distance function on the minimum set to the boundary is convex, and hence attains its minimum. Thus by iterating this, a totally convex compact totally geodesic submanifold without boundary, called a soul of $M$, is found in the minimum set. The well-known Cheeger--Gromoll structure Theorem\index{Cheeger--Gromoll structure Theorem}\index{Theorem!Cheeger--Gromoll} (see \cite{CG}) states that a complete non-compact Riemannian manifold is homeomorphic to the normal bundle over the soul in $M$. If the sectional curvature is positive, the soul is a point, and hence $M$ is diffeomorphic to $\R^n$. The Sharafutdinov construction \cite{Sharafutdinov} of flow curves along the negative of the subgradient of a Busemann function gives a distance non-increasing correspondence between two such flow curves. This was employed by Perelmann \cite{Perelman} for the proof of the famous soul conjecture. \par
						A simple example is seen here.
						
						\begin{tikzpicture}[scale=1.4]
						\draw [thick](-3.5,0) to(3.5,0) node[right] {$x$};
						\draw [thick](0,-2.6) to (0,2.6)node[above] {$y$};
						\draw [thick][decoration={markings, mark=at position 0.6 with {\arrow{<}}},postaction={decorate}](.2,1) to(1.5,2.3) node[above] {$-\nabla\phi$};
						\draw [thick][decoration={markings, mark=at position 0.6 with {\arrow{<}}},postaction={decorate}](.2,-1) to (1.5,-2.3)node[below] {$-\nabla\phi$};
						\draw (-1,0) to(0,-1) ;	\draw (0,-1) to(1,0) ;	\draw (1,0) to (0,1) ;	\draw (0,1) to(-1,0) ;
						\draw (-2,0) to(0,-2) ;	\draw (0,-2) to(2,0) ;	\draw (2,0) to (0,2) ;	\draw (0,2) to(-2,0) ;
						
						\draw[dashed][decoration={markings, mark=at position 0.8 with {\arrow{>}}},postaction={decorate}] (-1.2,2) arc (180:205:2.2cm);
						\draw[dashed][decoration={markings, mark=at position 0.8 with {\arrow{>}}},postaction={decorate}] (-2.2,1.4) arc (200:250:2.2cm);
						\draw[dashed][decoration={markings, mark=at position 0.8 with {\arrow{>}}},postaction={decorate}] (1.2,-2) arc (270:330:1cm) node[above] {flow curve};
						\draw[dashed][decoration={markings, mark=at position 0.8 with {\arrow{>}}},postaction={decorate}] (1.2,2) arc (200:260:1cm) node[right] {flow curve};
						\node at (2.2,-.2){$(b,0)$};\node at (1.2,-.2){$(a,0)$}; \node at (-1.3,-0.2){$(-a,0)$}; \node at (-2.3,-0.2){$(-b,0)$};
						\node at (-1,2.2) {$\phi^{-1}({b})$};\node at (-2,1.6) {$\phi^{-1}({a})$}; 
						\node[below=3.8cm]  {	Figure \ 2};
						\end{tikzpicture}
						\begin{Ex}
							Let $\phi(x,y):=|x|+|y|$ for $(x,y)\in\R^2$ be a convex function. It is clear that the distance function is monotone non-increasing along two flow curves of $-{\rm grad}(\phi)$.
						\end{Ex}			
						We do not know however, if anologues of the above results stated in \cite{GM}, \cite{CG} and in Example 5.2 are valid on 
						complete Finsler manifolds with positive (or non-negative) flag curvature (Flag curvature in Finsler geometry is an analogue of sectional curvature in Riemannian geometry, for details see \cite{BCS}).\par\medskip
						
						\subsection{Convex functions}\par
						A function $\phi:(M,F)\to\R$ is said to be {\it convex} if along every geodesic $\g:[a,b]\to (M,F)$, the restriction $\phi\circ\g:[a,b]\to\R$ is convex:  
						\begin{equation}\label{eq:convex}
						\phi\circ\g((1-\lambda)a+\lambda b)\le(1-\lambda)\phi\circ\g(a)+\lambda\phi\circ\g(b),\quad 0\le\lambda\le 1   
						\end{equation}   
						If the inequality in the above (\ref{eq:convex}) is strict for all $\g$ and for all $\lambda\in(0,1)$, $\phi$ is called {\it strictly convex}, and {\it strongly convex} if the second order difference quotient, namely $\{\phi o\g(h)-\phi o\g(-h) -2\phi o\g(0)\}/h^2$ is positive for all $\g$ and all $\lambda\in(0,1)$. In the special case where equality in (\ref{eq:convex}) holds for every $\g$ and for every $\lambda\in[0,1]$, the function is called an {\it affine function}. If a non-trivial convex function $\phi$ is constant on an open set, then it assumes its minimum on the open set and the number of components of a level set $\phi^{-1}(\{a\})$, $a>\inf_M\phi$, is equal to that of the boundary components of the minimum set of $\phi$. A convex function $\phi$ is said to be {\it locally non-constant} if it is not constant on any non-empty open set of $M$. From now on, {\it we always assume that our convex functions are locally non-constant}.\par
						The slope inequality of a one variable convex function is elementary, useful and employed throughout this section. Let $f:(\a,\beta)\to\mathbb R$ be a convex function. Let 
						$\a<a<b<c<d<\beta$ and $A:=(a,f(a))$, $B:=(b,f(b))$, $C:=(c,f(c))$ and $D:=(d,f(d))$ be points on the graph of $f$ (see Figure \ 3). The slope inequality is expressed as
						\begin{equation}\label{eq:slope}
						{\rm slope}(AB)\le{\rm slope}(AC)\le{\rm slope}(BC)\le{\rm slope}(BD)\le{\rm slope}(CD).
						\end{equation}
						
						\begin{tikzpicture} [scale=2.0]
						
						\draw[thick] (-1.8,3.24) parabola bend (0,0) (1.8,3.24) ;
						\draw (-1.2,1.44) -- (-0.5,0.25) node[left] {$B$};
						\draw (0.7,0.49) -- (-1.2,1.44) node[left] {$A$};	
						\draw (-0.5,0.25) -- (0.7,0.49)node[right] {$C$};
						\draw (0.7,.49) -- (1.6,2.56)node[right]{$D$};
						\draw (-0.5,0.25) -- (1.6,2.56);
						\node[below=0.4cm]  {	Figure \ 3};
						\end{tikzpicture}
						\par 
						We observe from (\ref{eq:slope}) that the right and left derivatives $f'_+(t)$ and $ f'_-(t)$ of $f$ exist and $f'_+(t)\ge f'_-(t)$ for all $t\in(\a,\beta)$, and  the equality holds if and only if $f$ is differentiable at $t$.
						\par\medskip
						
						The topology of Riemannian manifolds admitting locally nonconstant convex functions has been investigated in \cite{GS1} and \cite{GS2}. The topology of complete Alexandrov surfaces admitting locally nonconstant convex functions has been studied in \cite{Mashiko1} and \cite{Mashiko3}. The classification of Busemann G-surfaces admitting convex functions has been obtained in \cite{Innami1}. 
						The isometry groups of complete Riemannian manifolds admitting strictly convex functions have been discussed in \cite{Yamaguchi}. There are several extensions of convex functions, such as peakless\index{peakless function} functions introduced by Busemann~\cite{Busemann}, uniformly locally convex filtrations~\cite{Yamaguchi-convexfiltration}, etc. The splitting theorem for Alexandrov surfaces admitting affine functions has been established in \cite{Mashiko2}. Also in \cite{Mashiko1}, the condition for compact Alexandrov surfaces to admit locally non-constant convex functions has been studied. A detailed discussion of  convex sets on Riemannian manifolds of non-negative curvature is carried out in \cite{BZ}. It is emphasized that the notion of convexity makes sense, irrespective of whether the distance function is symmetric or not. Hence the extended notion of convex functions will be discussed on Finsler manifolds.\par\medskip
						We define the ends of a non-compact manifold $M$ and proper maps on $M$, which are useful for the investigation of topology of manifolds admitting convex functions.\par
						\begin{Def}\par
							{\rm
								Let $M$ be a noncompact manifold and $C_1\subset C_2\subset M$ be compact sets. Then, each component of $M\setminus C_2$ is contained in a unique component of $M\setminus C_1$.  An end of 
								$X$ is by definition an element of the inverse limit system $\{{\rm components\; of}\, M\setminus C\, ;\, C \,{\rm is\; compact}\}$ directed by the inclusion relation. 
							}
						\end{Def}\par
						For example, if $M$ is a compact surface from which $k$ points are removed, then $M$ has $k$ ends.
						$\R^1$ has two ends. For $n>1$, $\R^n$ has one end, for it is homemorphic to $S^n$ with one point removed. A cylinder $\bS^{n-1}\times\R$ has two ends, for it is homeomorphic to $S^n$ from which two points are removed.  \par
						\begin{Def}
							{\rm
								A map $f:M\to\R$ is said to be {\it proper} if  $f^{-1}(K)$ is compact for all compact set $K\subset \R$ and $f$ is said to be an {\it exhaustion} of $M$  if $f^{-1}(-\infty,a]$ is compact for all $a\in f(M)$.
							}
						\end{Def}
						\par\medskip
						
						We shall review the topology of geodesically complete Finsler manifolds admitting locally non-constant convex functions. The following Propositions are basic and important facts and have already been established in Riemannian geometry, (see \cite{GS1} and \cite{GS2}). \par
						Without assuming the continuity of a convex function on $(M,F)$, we have the following Proposition. For its proof see (\cite{GS1}, \cite{SS}):
						\begin{Prop}\label{Prop:Lipschitz}
							{\rm   
								Any convex function on $(M,F)$ is locally Lipschitz.
							}
						\end{Prop}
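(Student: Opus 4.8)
The plan is to prove the statement locally: for each $p\in M$ I will produce a neighborhood on which $\phi$ obeys a Lipschitz estimate, allowing the constant to deteriorate as the neighborhood shrinks. First I would fix $p$, use the Whitehead Convexity Theorem~\ref{Th:Whitehead} to choose $r>0$ with $B(p,r)$ convex, and set $\lambda:=\lambda(\overline{B(p,r)})$ for the reversibility constant from \eqref{eq:reversibilityconstant}; shrinking $r$ further (using Proposition 3.1 on a fixed compact neighborhood of $p$) I may assume that every geodesic segment of length less than $r$ issuing from a point of $\overline{B(p,r)}$ is minimizing. The whole argument is then a transcription, along geodesics and with bookkeeping by $\lambda$, of the elementary one-variable fact that a convex function on an open interval which is locally bounded is locally Lipschitz; the slope inequality \eqref{eq:slope} is the only analytic input.

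\textbf{Step 1: local boundedness above.} I would work in normal coordinates $\exp_p^{-1}$ on $B(p,r)$ and choose $n+1$ vectors $u_0,\dots,u_n\in T_pM$, of $F$-length as small as needed, whose Euclidean convex hull in $T_pM$ contains a ball $B_{\mathrm{eucl}}(0,\e)$ about the origin; put $q_i:=\exp_p u_i$. Let $H$ be the convex hull of $\{q_0,\dots,q_n\}$ inside $B(p,r)$, i.e.\ the smallest subset containing the $q_i$ and stable under passing to the (unique) minimizing geodesic between any two of its points. Since $\phi$ is convex along each such geodesic, one has $\phi\le\max_i\phi(q_i)=:C_0$ on all of $H$. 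The point is then that $H$ contains a neighborhood of $p$: in normal coordinates the minimizing $F$-geodesic between two nearby points is $C^1$-close to the Euclidean segment joining them (the geodesic equations have bounded coefficients and $d(\exp_p)_0=\mathrm{Id}$), so $\exp_p^{-1}(H)$ behaves like a Euclidean convex set and, by a Brouwer-degree argument applied to the iterated ``geodesic barycentric coordinate'' map, contains $B_{\mathrm{eucl}}(0,\e/2)$ once the $u_i$ are taken small enough. Hence $\phi\le C_0$ on some ball $B(p,\rho_1)$.

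\textbf{Steps 2 and 3: lower bound and the Lipschitz estimate.} For the lower bound I would take $x$ near $p$, let $\g$ be the minimizing geodesic from $p$ to $x$, extend it backward as a geodesic to $\g(-t)$ for a small fixed $t>0$ (both $\g(-t)$ and $x$ lie in $B(p,\rho_1)$ once $t$ and $d(p,x)$ are small, using $d(p,\g(-t))\le\lambda t$), and evaluate the defining convexity inequality of $\phi\circ\g$ on $[-t,d(p,x)]$ at the interior abscissa $0$:
\[
\phi(p)\le\frac{d(p,x)}{t+d(p,x)}\,\phi(\g(-t))+\frac{t}{t+d(p,x)}\,\phi(x);
\]
since $\phi(\g(-t))\le C_0$ this yields a lower bound $\phi(x)\ge c_0$ on a ball $B(p,\rho_2)$. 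With $c_0\le\phi\le C_0$ on $B(p,\rho_2)$ in hand, I would take $x,y\in B(p,\rho_2/(4\lambda))$, let $\g:[0,a]\to M$ with $a=d(x,y)$ be the minimizing geodesic from $x$ to $y$, and extend it to a geodesic on $[-s',a+s]$ with $s:=\rho_2/4$ and $s':=\rho_2/(4\lambda)$; the triangle inequality and the reversibility constant keep $\g(-s'),\g(a+s)\in B(p,\rho_2)$, the extending segments being minimizing by the choice of $r$. Then $f:=\phi\circ\g$ is convex on $[-s',a+s]$ with $f(-s'),f(a+s)\in[c_0,C_0]$, and the slope inequality \eqref{eq:slope} applied to the abscissae $-s'<0<a<a+s$ gives
\[
\frac{\phi(y)-\phi(x)}{a}\le\frac{f(a+s)-\phi(y)}{s}\le\frac{C_0-c_0}{s},\qquad
\frac{\phi(x)-\phi(y)}{a}\le\frac{f(-s')-\phi(x)}{s'}\le\frac{C_0-c_0}{s'},
\]
so $|\phi(x)-\phi(y)|\le\frac{C_0-c_0}{s'}\,d(x,y)$ on $B(p,\rho_2/(4\lambda))$; since $d(x,y)\le\lambda\,d(y,x)$ the same estimate holds with $d(y,x)$ in place of $d(x,y)$, which is the desired local Lipschitz bound.

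\textbf{Main obstacle.} The hard part is Step 1, the local upper bound: convexity along geodesics by itself does not even force local boundedness (just as a function affine along every affine line of $\R^n$ need not be continuous), so one must genuinely exploit that $M$ is a manifold, and the delicate point is the ``geodesic simplex'' claim that the iterated convex hull of suitably placed nearby points contains a full neighborhood of $p$. Making this rigorous needs the $C^1$-comparison of $F$-geodesics with Euclidean segments at small scale together with a degree/Brouwer argument, and that is where the bulk of the work lies; Steps 2 and 3 are then routine manipulations with one-variable convexity and the reversibility constant.
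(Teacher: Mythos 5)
The paper itself gives no proof of this proposition: it defers to \cite{GS1} and \cite{SS}, and your argument is essentially the standard route of those references (bound the function above on a geodesic hull of finitely many points, bound it below by prolonging geodesics through $p$, then get the Lipschitz estimate from the slope inequality \eqref{eq:slope}), adapted to the Finsler setting with the reversibility constant \eqref{eq:reversibilityconstant} handling the asymmetry of $d$. Your Steps 2 and 3 are correct and essentially complete: the convexity inequality at the interior parameter $0$ of the prolonged geodesic gives the lower bound, the four abscissae $-s'<0<a<a+s$ give the two-sided slope estimate, and the $\lambda$-bookkeeping needed to keep $\g(-t)$, $\g(-s')$, $\g(a+s)$ inside the ball where $c_0\le\phi\le C_0$ is done correctly (the constants are borderline but can be shrunk). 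Step 1, which you rightly identify as the real content, is only sketched, but the sketch is sound and completable: inside a Whitehead convex ball (Theorem \ref{Th:Whitehead}) the minimizing geodesic depends smoothly on its endpoints, so the iterated geodesic join map is continuous, its image lies in $H$, and in normal coordinates it deviates from the Euclidean barycentric map by $O(\de^2)$ at scale $\de$, i.e.\ by $o(\e)$, so the degree argument does produce a ball $B_{\mathrm{eucl}}(0,\e/2)$ inside $\exp_p^{-1}(H)$.

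One inaccuracy in your closing remark should be fixed: a function on $\R^n$ that is convex along every affine line is a convex function in the classical sense and is therefore automatically locally bounded and locally Lipschitz, and a function affine along every line is in fact affine (hence continuous), so the analogy you invoke is false. The correct way to phrase the obstacle is not that convexity along geodesics fails to force local boundedness in principle, but that the Euclidean proof of local boundedness rests on the fact that the simplex spanned by $n+1$ generic points has nonempty interior, and the corresponding statement for iterated geodesic joins on $(M,F)$ is exactly the geodesic-simplex claim your degree argument is designed to establish.
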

						
						\subsection{Riemannian and Finslerian results on convex functions}\par
						The assumption for a convex function to be locally non-constant, as was introduced, is necessary. For, we can construct on every noncompact manifold a complete Riemannian metric and a non-trivial smooth convex function whose minimum set contains a non empty open set. Therefore the existence of such a non-trivial convex function gives no restriction on the topology of a manifold. We first discuss the level sets of a locally nonconstant convex function. 
						\begin{Prop}\label{Prop:hypersurface}{\rm [compare Proposition 2.3 in \cite{GS1}]}
							{\rm
								Let $\phi:(M,F)\to\R$ be a convex function and $a>\inf_M\phi$. Then the $a$-level set $\phi^{-1}(\{a\})$ is a topological submanifold of dimension $n-1$.
							}
						\end{Prop}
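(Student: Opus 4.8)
The plan is to prove the statement locally: for each $p\in\phi^{-1}(\{a\})$ I will exhibit an open neighbourhood $W$ of $p$ in $M$ such that $\phi^{-1}(\{a\})\cap W$ is homeomorphic to $\R^{n-1}$. Since $\phi^{-1}(\{a\})$, with the subspace topology, is automatically Hausdorff and second countable, this is enough. Two facts will be used throughout: $\phi$ is continuous, by Proposition \ref{Prop:Lipschitz}, and the slope inequality (\ref{eq:slope}) governs one–variable convex functions.

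The first step, which I expect to be the main obstacle, is to produce through $p$ a short geodesic segment along which $\phi$ strictly decreases. Fix $p$ with $\phi(p)=a$. Since $a>\inf_M\phi$ there is $q$ with $\phi(q)<a$; let $\g:[0,L]\to(M,F)$ be a minimizing geodesic from $p$ to $q$. I would then extend $\g$ slightly past $p$ in the backward direction: the geodesic equation $\ddot x^k+\Gamma^k_{ij}(x,\dot x)\dot x^i\dot x^j=0$ is a second–order ODE, so the solution realizing $\g$ continues to a geodesic $\g:(-\eta_0,L]\to M$, and each closed subarc, after a shift of parameter, is again a forward geodesic; hence $f:=\phi\circ\g$ is convex on $(-\eta_0,L]$. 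One must note here that this backward continuation keeps the orientation of $\g$ fixed (it does not replace $\dot\g$ by $-\dot\g$), so the Finsler asymmetry does not intervene. Now convexity of $f$ together with (\ref{eq:slope}) gives $f'_+(0)\le\frac{f(L)-f(0)}{L}=\frac{\phi(q)-a}{L}=:-c<0$ and $f'_-(0)\le f'_+(0)$. Comparing, on each side of $0$, the secant slopes of $f$ issuing from $0$ with $f'_{\pm}(0)$, one obtains for a suitable small $\eta>0$
\[
\phi(\g(t))\ \ge\ a+c\,|t|\quad (t\in[-\eta,0]),\qquad \phi(\g(t))\ \le\ a-c\,t\quad (t\in[0,\eta]);
\]
in particular $\phi(\g(-\eta))>a>\phi(\g(\eta))$, and these strict inequalities persist if $\eta$ is shrunk.

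Next I would thicken $\g$ to a geodesic tube transverse to the level set. Put $v:=\dot\g(0)\in T_pM$, choose a smooth $(n-1)$-disc $D$ through $p$ transverse to $v$, and a smooth vector field $x\mapsto w(x)$ on $D$ with $w(p)=v$ (e.g. constant coordinate components in a chart). For $x\in D$ let $\g_x$ be the geodesic with $\g_x(0)=x$, $\dot\g_x(0)=w(x)$, and set $F(x,t):=\g_x(t)$. Then $F$ is smooth, $F(x,0)=x$, $F(p,\cdot)=\g$, and $dF_{(p,0)}$ restricts to the inclusion on $T_pD$ and sends $\partial_t$ to $v$; since $T_pD\oplus\R v=T_pM$ by transversality, $dF_{(p,0)}$ is an isomorphism, so after shrinking $D$ and $\eta$ the map $F$ is a diffeomorphism of $D\times(-\eta,\eta)$ onto an open set $W\ni p$. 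For each $x\in D$ the function $f_x(t):=\phi(\g_x(t))$ is convex in $t$, and since $(x,t)\mapsto\phi(F(x,t))$ is continuous, a further shrinking of $D$ arranges $f_x(-\eta)>a>f_x(\eta)$ for all $x\in D$. A convex function on $[-\eta,\eta]$ that is $>a$ at the left endpoint and $<a$ at the right endpoint attains the value $a$ exactly once: at least once by the intermediate value theorem, and if $\{f_x=a\}$ were a nondegenerate interval $[s_1,s_2]$ then $f_x\equiv a$ on it, whence $(f_x)'_+\equiv 0$ on $[s_1,s_2)$, and monotonicity of $(f_x)'_+$ would force $(f_x)'_+\ge 0$ on $[s_2,\eta)$ and hence $f_x(\eta)\ge a$, a contradiction. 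Denote by $\tau(x)$ this unique crossing point.

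Finally I would verify that $\tau:D\to(-\eta,\eta)$ is continuous — if $x_k\to x$, the values $\tau(x_k)$ accumulate in the compact interval $[-\eta,\eta]$ and any accumulation point $t'$ satisfies $\phi(F(x,t'))=a$ by continuity, hence $t'=\tau(x)$ by uniqueness — and then conclude. The map $\iota:D\to M$, $\iota(x):=F(x,\tau(x))$, is continuous, injective (because $F$ is), has image exactly $\phi^{-1}(\{a\})\cap W$, and its inverse is the restriction to that image of the continuous map $W\to D$ sending $F(x,t)$ to $x$; so $\iota$ is a homeomorphism of $D$ onto $\phi^{-1}(\{a\})\cap W$. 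Taking $D\cong\R^{n-1}$, this displays $\phi^{-1}(\{a\})$ as a topological $(n-1)$-manifold near $p$, and $p$ was arbitrary. The only genuinely delicate point is the backward continuation of the minimizing geodesic in the first step and the one–sided derivative estimate it yields; everything afterwards is point–set topology together with the elementary behaviour of one–variable convex functions recorded in (\ref{eq:slope}).
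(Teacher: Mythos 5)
Your argument is sound, and it is essentially the argument this paper is pointing to: the Proposition is stated here without proof, with a reference to Proposition 2.3 of \cite{GS1}, and that proof follows exactly your scheme --- produce through $p$ a geodesic along which $\phi$ decreases at a definite rate (via the slope inequality (\ref{eq:slope}) applied to a minimizing geodesic from $p$ to a point $q$ with $\phi(q)<a$, using the geodesic completeness standing in this section and the continuity from Proposition \ref{Prop:Lipschitz}), thicken it to a geodesic tube transverse to the level set by the inverse function theorem, and exhibit the level locally as the graph of the unique crossing time $\tau$ over a transverse $(n-1)$-disc. Your Finslerian adjustments are the right ones: the backward ODE continuation keeps the orientation of $\g$, so $\phi\circ\g$ remains convex on the extended domain, and the tube is built from forward geodesics with nonvanishing initial vectors, where the geodesic flow is smooth. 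One spot to tighten: in proving uniqueness of the crossing you only exclude the case where $\{f_x=a\}$ is a nondegenerate interval, but a priori there could be two isolated crossings $s_1<s_2$; the same slope monotonicity disposes of this case --- the secant of $f_x$ over $[s_1,s_2]$ has slope $0$, so by (\ref{eq:slope}) the secant over $[s_2,\eta]$ has slope $\ge 0$, forcing $f_x(\eta)\ge a$, a contradiction (equivalently, $f_x(s_2)\le(1-\lambda)f_x(s_1)+\lambda f_x(\eta)<a$ with $\lambda\in(0,1)$). With that one-line addition the proof is complete.
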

						\medskip\noindent
						\begin{Prop}
							{\rm
								Let $C\subset(M,F)$ be a closed locally convex set. Then there exists a totally geodesic submanifold $W$ of $M$ such that $W\subset C$ and its closure is $C$. 
							}
						\end{Prop}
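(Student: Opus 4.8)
The plan is to take $W$ to be the relative interior of $C$, defined locally through the convex balls provided by the Whitehead convexity theorem, and to prove that $W$ is a totally geodesic submanifold of $M$ of some constant dimension $k$ with $\overline W=C$. First I would set up the infinitesimal picture. By Theorem \ref{Th:Whitehead} every $x\in M$ has a convex ball $B(x,\de(x))$, and local convexity of $C$ means that $C\cap B(x,\de(x))$ is convex. In the geodesic polar coordinates of \S 3.2 centred at $x$ the $F$-geodesics issuing from $x$ are linear rays of $T_xM$, so $\exp_x^{-1}(C\cap B(x,\de(x)))$ is star-shaped about the origin. For $x\in C$ let $T_xC\subset T_xM$ denote the tangent cone of $C$ at $x$, i.e.\ the closure of the set of initial velocities of geodesics leaving $x$ into $C$; using geodesic convexity of $C\cap B(x,\de(x))$ and smoothness of $\exp$, one checks that $T_xC$ is a closed convex cone whose linear span has dimension $\dim(C\cap B(x,\de(x)))$. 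This last number is locally constant in $x$, hence a constant $k$ on the connected set $C$ (argue componentwise in general). Writing $L_x:=T_xC\cap(-T_xC)$ for the lineality space of the cone, one has $\dim L_x\le k$ with equality exactly at the relative interior points of $C$, those $x$ at which $T_xC=L_x$ is a $k$-plane; set $W:=\{x\in C\mid\dim L_x=k\}$. Then $W$ is open in $C$ and, since a nonempty convex set has nonempty relative interior, $W\ne\emptyset$.

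I would next check $\overline W=C$. One inclusion is clear since $W\subset C=\overline C$. Conversely, a convex set is the closure of its relative interior, so applying this to $C\cap B(y,\de(y))$ for $y\in C$ shows that relative interior points of $C$ --- which are exactly the points of $W$ --- accumulate at $y$; hence $y\in\overline W$.

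The substantive step is that $W$ is a totally geodesic submanifold of dimension $k$. I would prove that near each $x_0\in W$ the set $C$ coincides with a $k$-dimensional totally geodesic submanifold of $M$ tangent to $L_{x_0}$ at $x_0$. Granting this, $W$ --- being open in $C$ --- is locally a totally geodesic $k$-disk, namely the exponential image of a neighbourhood of the origin in $L_{x_0}$, and hence a $C^\infty$ totally geodesic submanifold of $M$, connected if $C$ is. To produce the local model: for $v\in L_{x_0}$ one has $\exp_{x_0}(\pm\e v)\in C$ for small $\e$, so by convexity of $C\cap B(x_0,\de(x_0))$ the whole geodesic arc through $x_0$ in direction $v$ lies in $C$; feeding these arcs, as $v$ ranges over a basis of $L_{x_0}$, into the convexity of $C$ and tracking how the tangent cones move along geodesics of $C$ shows that $\{L_x\}_{x\in W}$ is a parallel $k$-plane distribution whose integral manifolds are totally geodesic and fill out $C$ near $x_0$. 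The main obstacle is exactly this step --- showing that the relative interior of a locally convex set is totally geodesic, equivalently that the tangent cones transport consistently so that $x\mapsto L_x$ is a genuine smooth parallel distribution with totally geodesic leaves. This is the Cheeger--Gromoll structure statement for convex sets \cite{CG}; in the Finsler case it has to be carried out with the polar-coordinate Riemannian metric $g$ of \S 3.2 along each geodesic together with careful first- and second-variation estimates --- the curvature corrections are $O(t^2)$ and must be absorbed --- since one cannot simply invoke Euclidean convexity. The remaining parts --- the localization, the dimension count and the density statement --- are routine once the slope inequality (\ref{eq:slope}) and the non-symmetric triangle inequality are used with care.
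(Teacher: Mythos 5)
Your plan reduces the proposition to a single claim---that the lineality spaces $L_x$ of the tangent cones form a parallel $k$-plane distribution whose integral leaves are totally geodesic and locally exhaust $C$---and you then explicitly leave that claim unproved (``the main obstacle is exactly this step''), deferring to Cheeger--Gromoll. That claim is essentially the whole content of the proposition, so what you have is a reduction, not a proof, and the citation does not close it: the Cheeger--Gromoll structure theory concerns totally convex sets in complete Riemannian manifolds of non-negative curvature, and its transport of tangent cones uses reversible geodesics and curvature comparison, none of which is available here; the adaptation ``with careful first- and second-variation estimates'' is precisely what would have to be written out. In addition, several Euclidean convexity facts you invoke are not free in this setting, because $\exp_x^{-1}(C\cap B(x,\de(x)))$ is only star-shaped about the origin, not convex in $T_xM$: that $T_xC$ is a convex cone, that the local dimension is locally constant, and that ``a convex set is the closure of its relative interior'' each require an argument adapted to geodesic convexity. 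There is also a concrete Finsler-specific failure in your local model: for $v\in L_{x_0}$ the curve formed by $\exp_{x_0}(tv)$, $t\ge 0$, together with $\exp_{x_0}(t(-v))$, $t\ge 0$, is not a geodesic when $F$ is non-reversible, and the minimizing geodesic from $\exp_{x_0}(\e(-v))$ to $\exp_{x_0}(\e v)$ need not pass through $x_0$; so ``the whole geodesic arc through $x_0$ in direction $v$ lies in $C$'' does not follow from convexity of $C\cap B(x_0,\de(x_0))$---local convexity only yields the two one-sided segments $\g_{xy}$ and $\g_{yx}$ inside $C$, which is exactly what the paper uses.

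For comparison, the paper avoids the distribution/parallel-transport machinery altogether. Inside each convex ball it takes a totally geodesic submanifold $W(x)\subset C\cap B(x,r)$ through $x$ of maximal dimension $k$, and shows that $W(x)\cap B(x,r)$ exhausts $C\cap B(x,r)$ by a transversality argument: if some $z\in C\cap B(x,r)$ lay off $W(x)$, the cone of minimizing geodesics from a small open set $\Omega\subset W(x)$ to $z$ would lie in $C$ and sweep out a $(k+1)$-dimensional totally geodesic submanifold, contradicting maximality. Density of $W=\bigcup_{x\in C}W(x)$ in $C$ is then obtained by a foot-point argument (a nearest point $y\in\overline{W}$ to a hypothetical $z\in C\setminus\overline{W}$, plus the same transversality dichotomy at $y$), rather than by quoting that a convex set is the closure of its relative interior. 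To salvage your route you would either have to carry out the cone-transport/second-variation analysis yourself, or replace it by a maximality-plus-transversality argument of this kind.
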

						\begin{proof}
							{\rm
								Since $C$ is locally convex, every point $x\in C$ admits a convex set $B(x,r)\cap C$ for some $r\in(0,\de(x))$. If $y$ is a point in $B(x,r)\cap C$, then $\g_{xy}:[0,d(x,y)]\to (M,F)$ and $\g_{xy}:[0,d(y,x)]\to (M,F)$ are contained entirely in $B(x,r)\cap C$. Clearly, its interior is a totally geodesic submanifold of $(M,F)$ of dimension at least one, contained in $B(x,r)\cap C$. Thus $x$ is contained in a $k(x)$-dimensional totally geodesic submanifold which is contained entirely in $B(x,r)\cap C$ such that $k(x)$ is maximal dimension of all such totally geodesic submanifolds in $B(x,r)\cap C$. Setting $k=\max_{x\in C}\,k(x)$, we have a $k$-dimensional totally geodesic submanifold, say, $W(x)$ of $(M,F)$ contained in $B(x,r)\cap C$. Suppose $W(x)\cap B(x,r)\subsetneqq C\cap B(x,r)$. We then find a point $z\in B(x,r)\cap(C\setminus W(x))$. Clearly, we have
								\[ d(z, \overline{W(x)} )\ge 0\quad\text{and $\dot\g_{xz}(0)$ is transversal to the tangent space  $T_xW(x)$ at $x$.}    \]
								Thus we find a small open set $\Om\subset W(x)$ and a family of minimizing geodesics emanating from points on $\Om$ and ending at $z$, whose initial vectors are transversal to $TW(x)$. We thus get a cone consisting of minimizing geodesics 
								\[    \g_{yz}:[0,d(y,z)]\to B(x,r)\cap C,\quad y\in\Om,    \]
								which is contained entirely in $C$ and forms a totally geodesic submanifold of dimension $k+1$, a contradiction.    \par
								Let $W=\cup_{x\in C}W(x)\subset C$. Again the transversality argument with $W(x)$ implies that $W$ is a smooth totally geodesic submanifold of maximal dimension in $C$. \par
								We finally prove that the closure $ \overline{W}$ of $W$ coincides with $C$. To prove this, suppose that there exists a point $z\in C\setminus\overline{W}$. We then find a point $y\in \overline{W}$ such that $d(z,y)=d(z,\overline{C}))>0.$ Let $T_y\overline{W}\subset T_yM$ be the linear subspace obtained as the limit $\lim_{j\to\infty}\,T_{y_j}W$; $y_j\in W$, $\lim_{j\to\infty}y_j=y$. If $\dot\g_{zy}(d(z,y))$ is transversal to $T_y\overline{W}$. Then the above argument shows the existence of a $(k+1)$-dimensional totally geodesic smooth submanifold in $C$, which is a contradiction. Therefore, we have $\dot\g_{zy}(d(z,y))\in T_y\overline{W}$, and hence $\g_{zy}(0,d(z,y))\subset W$. This proves $C=\overline{W}$.
							}
						\end{proof}
						\par\medskip
						\subsection{Level set configurations}\par
						An elementary observation based on the slope inequality (\ref{eq:slope}) gives the following simple fact on a locally non-constant convex function $\phi:(M,F)\to\R$ (and $\phi:(M,g)\to\R$). \par\medskip
						
						If there exists a compact level $\phi^{-1}(\{a\})\subset (M,g)$, then so are all the other levels.  If $\phi^{-1}(\{a\})\subset(M,F)$ is compact, then so are $\phi^{-1}(\{b\})$ for all $b\ge a$.
						
						The proof is sketched as follows: Suppose there is a non-compact level $\phi^{-1}(\{c\})\subset(M,F)$. There is a sequence of minimizing geodesics emanating from an arbitrary  fixed point $x\in \phi^{-1}(\{a\})$ and ending at points $y_j\in \phi^{-1}(\{c\})$, $j=1,2,\dots$ where $\{y_j\}_{j=1,2,\dots}$ is a  sequence of points with $\lim_{j\to\infty}d(x,y_j)=\infty$.  We then choose a ray obtained as the limit of these minimizing geodesics, along which $\phi$ must be bounded above by $c$. This means that this ray is contained in $\phi^{-1}(\{a\})$, a contradiction.\par\medskip
						\begin{Lem}\label{Lem:levelsets}
							{\rm
								Let $\phi:(M,F)\to\R$ be a locally non-constant convex function. Let $\phi^{-1}(\{a\})$ be a compact level. Then, 
								$\phi^{-1}[a,b]$ for a fixed $b>a$ is homeomorphic to the product $\phi^{-1}(\{a\})\times[a,b]$.
							}
						\end{Lem}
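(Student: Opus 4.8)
The plan is to build the homeomorphism $\phi^{-1}[a,b]\cong\phi^{-1}(\{a\})\times[a,b]$ by flowing level sets into one another along geodesic segments that are transversal to the levels, using the convexity of $\phi$ to control the behavior. First I would recall from Proposition \ref{Prop:hypersurface} that each level $\phi^{-1}(\{c\})$ with $c\in[a,b]$ is an $(n-1)$-dimensional topological submanifold, and from the level-set configuration fact proved just above that all of these levels are compact (since $\phi^{-1}(\{a\})$ is). For each point $p\in\phi^{-1}(\{a\})$ and each $c\in(a,b]$, I would pick a minimizing geodesic $\si_{p,c}:[0,\ell]\to(M,F)$ from $p$ to a foot of $p$ on the compact set $\phi^{-1}(\{c\})$; the slope inequality (\ref{eq:slope}) applied to $\phi\circ\si_{p,c}$ forces $\phi$ to be strictly increasing along such a segment (it starts at value $a$, ends at value $c>a$, and convexity plus local non-constancy prevent it from being flat), so this segment meets each intermediate level $\phi^{-1}(\{c'\})$, $a\le c'\le c$, in exactly one point. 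This gives, for each $p$, a canonical arc transversal to the foliation by levels.

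Next I would assemble these arcs into a global parametrization. Define $H:\phi^{-1}(\{a\})\times[a,b]\to\phi^{-1}[a,b]$ by letting $H(p,c)$ be the unique point of $\si_{p,b}$ lying on $\phi^{-1}(\{c\})$ — i.e.\ flow from $p$ along the chosen segment up to level $c$. The three things to verify are: (i) $H$ is well-defined, which requires that the choice of minimizing geodesic to the top level $\phi^{-1}(\{b\})$ can be made uniquely, or at least that different choices give the same $H$; (ii) $H$ is continuous, which I would get from continuity of the distance function to the compact level sets and a limiting argument on minimizing geodesics of the type used repeatedly in \S 4 (Lemma \ref{Lem:injectivityradius}, Lemma \ref{Lem:infinity}); and (iii) $H$ is a bijection with continuous inverse. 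Injectivity and surjectivity follow because every point $q\in\phi^{-1}[a,b]$ lies on a minimizing geodesic realizing $d(q,\phi^{-1}(\{b\}))$ whose backward extension (controlled again by the slope inequality) hits $\phi^{-1}(\{a\})$ in a unique point $p$, and then $q=H(p,\phi(q))$; uniqueness of $p$ is exactly the statement that two such descending segments from $q$ cannot reach $\phi^{-1}(\{a\})$ at different points without violating convexity of $\phi$ along a comparison geodesic. Properness of $\phi$ on $\phi^{-1}[a,b]$ (a consequence of compactness of the levels) upgrades the continuous bijection $H$ between these spaces, one of which is compact, to a homeomorphism.

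The main obstacle I anticipate is step (i)–(iii)'s reliance on \emph{uniqueness} of the transversal arcs: a priori the foot of $p$ on $\phi^{-1}(\{b\})$ need not be unique (that is the backward cut locus phenomenon discussed in Remark after Proposition 4.1), and even the point where an ascending minimizing segment first meets an intermediate level could, without care, depend on the segment. Resolving this cleanly is where the convexity of $\phi$ has to do the real work: if $\si$ and $\si'$ were two segments from the same $p$ reaching $\phi^{-1}(\{b\})$ at distinct feet $q,q'$, one compares $\phi$ along a short geodesic between nearby interior points of $\si$ and $\si'$ inside a convex ball (à la the Whitehead Theorem \ref{Th:Whitehead} and the foot-uniqueness argument of Lemma \ref{Lem:foot}) and derives that $\phi$ is constant on an open set, contradicting local non-constancy. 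I would expect to need a careful short-cut/convexity argument here, and also to phrase the construction so that $H$ is built from the distance function to the compact levels rather than from any single chosen geodesic, so that continuity is automatic and only the fiber structure needs the convexity input.
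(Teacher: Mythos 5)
Your plan hinges on a uniqueness claim that is false, and the proposed mechanism for rescuing it does not work. You build the transversal arcs by taking, for $p\in\phi^{-1}(\{a\})$, a minimizing segment to a \emph{foot of $p$ on the upper level} $\phi^{-1}(\{b\})$, and you claim that two distinct such feet would force $\phi$ to be constant on an open set. Consider $M=\R^2$ Euclidean and $\phi(x,y)=\max(|x|,|y|)$, which is convex along every geodesic, locally non-constant, with compact levels. For $p=(1,1)$ on the level $a=1$, the distance to the level $b=2$ is realized at the two distinct feet $(2,1)$ and $(1,2)$, and comparing $\phi$ along short segments between nearby interior points of the two foot-segments yields no constancy whatsoever. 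Worse, nearby points of the level $\{ \phi=1 \}$ on either side of the corner have feet near $(2,1)$ resp.\ near $(1,2)$, so any selection of upward feet is discontinuous there; your fallback of defining $H$ ``from the distance function to the compact levels'' does not repair this, because the superlevel region $\phi^{-1}[b,\infty)$ is not convex as seen from below, and that is precisely why upward feet are neither unique nor continuously selectable. The same defect undermines your injectivity/surjectivity step, which again rests on uniqueness of a descending segment from $q$ through a foot on $\phi^{-1}(\{b\})$. A secondary gap: convexity of $\phi\circ\si$ with endpoint values $a<c$ does not by itself give strict monotonicity or a single intersection with each intermediate level (a convex function along a geodesic may dip below $a$ or be constant at an intermediate value on a subinterval), so ``meets each level in exactly one point'' also needs an argument you do not supply.

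The paper's proof avoids exactly these traps by reversing the direction of the foot construction and localizing it. One inserts auxiliary values $a_0<a_1=a<\dots<a_k=b<a_{k+1}$ spaced so closely (relative to the convexity radius $\delta(\phi^{-1}[a_0,a_{k+1}])$ of the compact slab) that each point $x\in\phi^{-1}(\{a_i\})$ has a \emph{unique} foot on $\phi^{-1}(\{a_{i-2}\})$: here the foot is taken \emph{downward}, onto the boundary of a totally convex sublevel set, and within a convex ball, so uniqueness holds (this is the foot-uniqueness mechanism of Lemma 4.1/Proposition 4.1, which is simply unavailable for feet on upper levels). The extra top level $a_{k+1}>b$ makes points of $\phi^{-1}(\{b\})$ interior points of descending segments, the slabwise correspondences $x_i\mapsto x_{i-1}$ are homeomorphisms, and composing them produces broken geodesics $T(x_k,\dots,x_1)$ along which the slope inequality (\ref{eq:slope}) gives monotonicity of $\phi$ with derivative bounded away from zero, hence bounded length and the product structure $\phi^{-1}[a,b]\cong\phi^{-1}(\{a\})\times[a,b]$. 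To fix your argument you would have to adopt this small-step, downward-foot scheme (or an equivalent localization); the single global upward-foot flow cannot be made well defined or continuous in general.
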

						\begin{proof}
							{\rm
								The slope inequality \ref{eq:slope} plays an important role in this. We first choose two numbers $a_0\in(\inf_M\phi,a)$ and $a_{k+1}>b$ and let $\de:=\de(\phi^{-1}[a_0,a_k])$ be the convexity radius over $\phi^{-1}[a_0,a_k]$. Take a sequence of real numbers
								\[   a_{k+1}>a_k:=b>a_{k-1}>\cdots>a_1:=a>a_0,     \]
								such that for each integer $i=2,\cdots,k+1$ and for each point $x\in\phi^{-1}(\{a_i\})$, we find a unique foot $f(x)$ of $x$ on $\phi^{-1}(\{a_{i-2}\})$.   \par
								Let $x_k\in\phi^{-1}(\{a_k\})$ be an arbitrary point. We then find a unique point $x_{k+1}\in\phi^{-1}(\{a_{k+1}\})$ such that $x_k$ belongs to the interior of $T(x_{k+1},f(x_{k+1}))$, where we denote by $T(x_{k+1},f(x_{k+1}))$ the unique foot of $x_{k+1}$ on $\phi^{-1}(\{a_{k-1}\})$. \par
								The uniqueness of feet implies that there is a homeomorphism between $\phi^{-1}(\{a_k\})$ and $\phi^{-1}(\{a_{k-1}\})$ via the correspondence $x_k\mapsto x_{k-1}$. Thus we have a homeomorphism between $\phi^{-1}[a_{k-1},a_k]$ and $\phi^{-1}(\{a_k\})\times[a_{k-1},a_k]$ through the feet.\par
								By iteration, we have for an arbitrary fixed point $x_k\in\phi^{-1}(\{a_k\})$, a sequence of points and a minimizing geodesics
								\[ \{x_{k+1},x_k,\cdots,x_1\}\ {\rm and} \ \{T(x_{k+1},f(x_{k+1})), T(x_k,f(x_k)),\cdots, T(x_2,f(x_2))\}   \]
								which satisfies the conditions
								\[      d(x_i,x_{i-1})>d(x_i,f(x_{i+1})),\quad i=k,\cdots,2.   \]
								
								\begin{tikzpicture}
								\draw (-4,0) to[bend left] (4,0) node[right] {$a_{k-2}$};
								\draw (-4,1) to[bend left] (4,1)node[right] {$a_{k-1}$};
								\draw (-4,2) to[bend left] (4,2)node[right] {$a_k=b$};
								\draw (-4,3) to[bend left] (4,3)node[right] {$a_{k+1}$};
								
								\draw[thick][decoration={markings, mark=at position 0.4 with {\arrow{>}}},postaction={decorate}] (.7,4.12) arc (120:155:2.2cm);
								\draw[dashed][decoration={markings, mark=at position 0.8 with {\arrow{>}}},postaction={decorate}] (.7,4.12) arc (120:182:2.2cm)node[below] {$f(x_{k+1})$};
								\draw[thick][decoration={markings, mark=at position 0.4 with {\arrow{>}}},postaction={decorate}] (-.2,3.16) arc (120:158:2.2cm);
								\draw[dashed][decoration={markings, mark=at position 0.8 with {\arrow{>}}},postaction={decorate}] (-.2,3.16) arc (120:186:2.2cm)node[below] {$f(x_k)$};
								\draw[thick][decoration={markings, mark=at position 0.4 with {\arrow{>}}},postaction={decorate}] (-1.15,2.05) arc (120:161:2.2cm);
								\draw[dashed][decoration={markings, mark=at position 0.8 with {\arrow{>}}},postaction={decorate}] (-1.15,2.05) arc (120:186:2.2cm);
								\node at (.87,4.3){$x_{k+1}$}; \node at (-.34,3.4){$x_k$};\node at (-1.7,2.3){$x_{k-1}$};\node at (-2.5,1.05){$x_{k-2}$};
								\draw[dashed][decoration={markings, mark=at position 0.9 with {\arrow{>}}},postaction={decorate}] (-.9,2.5)  arc (250:175:2.5cm)node[above] {$T(x_{k+1},..., x_1)$};

								\node[below=0.8cm]  {	Figure \ 4};
								\end{tikzpicture} \par
								The right derivative of $\phi oT(x_{k+1},\dots, x_1)$ is monotone increasing (this is evident from the figure). \par
								\begin{tikzpicture}
								\draw (-4,0) to[bend left] (4,0) node[right] {$a_0$};
								\draw (-4,1) to[bend left] (4,1)node[right] {$a_1=a$};
								\draw (-4,2) to[bend left] (4,2)node[right] {$a_2$};
								\draw (-4,3) to[bend left] (4,3)node[right] {$a_3$};
								\draw(-4,4) to[bend left] (4,4)node[right] {$a_4$};
								\draw[thick][decoration={markings, mark=at position 0.4 with {\arrow{>}}},postaction={decorate}] (0,5.18) arc (50:17:2.2cm);			
								\draw[dashed][decoration={markings, mark=at position 0.8 with {\arrow{>}}},postaction={decorate}] (0,5.18) arc (50:-10:2.2cm)node[below] {$f(x_4)$};
								\draw[thick][decoration={markings, mark=at position 0.4 with {\arrow{>}}},postaction={decorate}] (.7,4.15) arc (120:155:2.2cm);
								\draw[dashed][decoration={markings, mark=at position 0.8 with {\arrow{>}}},postaction={decorate}] (.7,4.15) arc (120:182:2.2cm)node[below] {$f(x_3)$};
								\draw[thick][decoration={markings, mark=at position 0.4 with {\arrow{>}}},postaction={decorate}] (-.2,3.16) arc (120:158:2.2cm);
								\draw[dashed][decoration={markings, mark=at position 0.8 with {\arrow{>}}},postaction={decorate}] (-.2,3.16) arc (120:186:2.2cm)node[below] {$f(x_2)$};
								\node at (0,5.5){$x_4$};\node at (.87,4.3){$x_3$}; \node at (-.34,3.4){$x_2$};\node at (-1.3,2.3){$x_1$};
								\draw[dashed][decoration={markings, mark=at position 0.9 with {\arrow{>}}},postaction={decorate}] (-.9,2.5)  arc(250:170:2.5cm)node[above] {$T(x_{k+1},..., x_1)$};
								\node[below=0.8cm]  {	Figure \ 5};
								\end{tikzpicture}

								The slope inequality then implies that the right and left derivatives of $\phi\circ T(x_{i+1},f(x_{i+1}))$ at $x_i$ are larger
								than those of $\phi\circ T(x_i,f(x_i))$ at $x_i$. Therefore, if $\phi$ is restricted to the union of broken geodesics 
								\begin{equation}\label{eq:brokengeodesics}    T(x_k,x_{k-1},\dots, x_1):=T(x_k,x_{k-1})\cup T(x_{k-1},x_{k-2})\cup\dots \cup T(x_2,x_1),   
								\end{equation}
								then, it is monotone and convex. Clearly, the right and left derivatives at every point of $T(x_k,x_{k-1},\dots, x_1)$ are bounded above by a negative number $\mu=\mu(a_0,a,\de)$;
								here $\mu$ is defined by
								\[     -\mu:=\frac{a-a_0}{\max\{d(x,\phi^{-1}(\{a_0\}))|x\in \phi^{-1}(\{a\})\}}.   \]
								This means that the length of $T(x_k,x_{k-1},\dots, x_1)$ is bounded above by $\frac{(b-a)}{\mu(a_0,a,\de)}<0$. This completes the proof.      
							}
						\end{proof}
						\medskip
						Let $\phi:(M,g)\to\R$ be a locally non-constant convex function on a complete Riemannian manifold. The Sharafutdinov construction of flow curves along $-{\rm grad}(\phi)$ implies that the diameter function $t\mapsto {\rm diam}(\phi^{-1}(\{t\}))$ is monotone non-decreasing. Here we set
						\begin{equation}   \label{eq:diam}{\rm diam}(\phi^{-1}(\{t\})):=\sup\,\{d(x,y)\,|\,x,y \in \phi^{-1}(\{t\})\subset (M,g)\}.     \end{equation}
						The monotone property of $t\mapsto{\rm diam}(\phi^{-1}(\{t\}))$ may be roughly explained as follows: \par
						Let $C\subset(M,g)$ be a closed convex set and $x,y\in M\setminus C$ be taken sufficiently close to $C$ such that there exist unique foot $f(x), f(y)$ of $x,y$ on $C$ respectively. We observe that $d(x,y)\ge d(f(x),f(y))$. This shows that the diameter function has everywhere non-negative derivative (See \cite{GS2}, \cite{GS1}). We therefore get that if $\phi$ admits a level that is compact, then so are all the others. \par
						However we do not know if the monotone property of the diameter function is valid for Finsler manifolds. Irrespective of whether the  diameter function is monotone or not,  we get from Lemma \ref{Lem:levelsets} the following
						\par\medskip
						\begin{Th}[see Theorem 1.1 in \cite{SS}]\label{Th:compactlevels}
							{\rm
								Let $(M,F)$ be a complete Finsler manifold and $\phi:(M,F)\to\R$ be a locally non-constant convex function whose level sets are all compact. Then we have the following:
								\begin{enumerate}
									\item If $\phi^{-1}(\{c\})$ is connected for some 
									$c>\inf_M\phi$, then there exists a homeomorphism $H:\phi^{-1}(\{c\})\times(\inf_M\phi,\infty)\to M$ such that\par
									(a) : $H(x,t)\in\phi^{-1}(\{t\})$ for all $(x,t)\in\phi^{-1}(\{c\})\times(\inf_M\phi,\infty)$.\par
									(b) : If $a,b\in\phi(M)$, $a<b$, we then have $H(x,[a,b])=T(x_k,x_{k-1},\dots,x_1)$ as defined in (\ref{eq:brokengeodesics}).
									\item If $\phi$ attains its infimum, say $m:=\inf_M\phi$, then $M$ is homeomorphic to the normal bundle over $\phi^{-1}(\{m\})$ in $M$.
									\item If there is a disconnected level, then $\phi$ attains its minimum $m=\inf_M\phi$, and $\phi^{-1}(\{m\})$ is a compact totally geodesic smooth hypersurface with trivial normal bundle. Moreover, $M$ is homeomorphic to $\phi^{-1}(\{m\})\times\R$.
								\end{enumerate}
							}
						\end{Th}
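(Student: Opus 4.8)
The plan is to make Lemma~\ref{Lem:levelsets} the engine. It already supplies the local structure: for a compact level $\phi^{-1}(\{a\})$ and any $b>a$ it identifies the slab $\phi^{-1}[a,b]$ with the cylinder $\phi^{-1}(\{a\})\times[a,b]$, the product being carried by the broken geodesics $T(x_k,x_{k-1},\dots,x_1)$ of (\ref{eq:brokengeodesics}). Since \emph{all} level sets are assumed compact, this applies on every slab, and I would first record two consequences: any two levels $\phi^{-1}(\{a\})$, $\phi^{-1}(\{b\})$ with $a,b\in(\inf_M\phi,\infty)$ are homeomorphic (chain the broken-geodesic correspondence through intermediate levels); and, when $m:=\inf_M\phi$ is attained, the sublevel sets $\phi^{-1}(-\infty,b]$ are compact and $\phi^{-1}(-\infty,m+\e]$ shrinks to a compact tubular region around $\phi^{-1}(\{m\})$. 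The main obstacle, relative to the Riemannian case, is exactly that two ingredients which are routine there --- propagation of compactness to all sublevel sets, and control of how levels degenerate as $t\downarrow m$ --- are classically obtained from monotonicity of $t\mapsto{\rm diam}(\phi^{-1}(\{t\}))$ via the Sharafutdinov construction, which has no Finsler analogue (as remarked just above Lemma~\ref{Lem:levelsets}); these have to be recovered here from the cylinder structure of Lemma~\ref{Lem:levelsets} and the hypothesis that every level is compact.

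For (1) I take $m:=\inf_M\phi$ to be unattained (the attained case falls under (2)), so $\phi^{-1}((m,\infty))=M$; since $M$ is connected and all levels have the same homeomorphism type, one level being connected forces all of them to be connected. Fix $c>m$ and a bi-infinite subdivision $\cdots<a_{-1}<a_0=c<a_1<\cdots$ with $a_j\to\infty$ and $a_{-j}\to m$. Applying Lemma~\ref{Lem:levelsets} on each slab $\phi^{-1}[a_{j-1},a_j]$ and gluing the cylinders along the single common level $\phi^{-1}(\{a_j\})$ of consecutive slabs yields a homeomorphism $H:\phi^{-1}(\{c\})\times(m,\infty)\to M$ with $H(x,t)\in\phi^{-1}(\{t\})$, while property (b) is the slab-by-slab broken-geodesic description supplied by Lemma~\ref{Lem:levelsets}, valid on every $\phi^{-1}[a,b]$ once the subdivision is fixed. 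Note that this construction uses no connectedness, so the same gluing gives $M\cong\phi^{-1}(\{a\})\times(m,\infty)$ for \emph{any} single level whenever $m$ is unattained --- a fact reused in (3).

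For (2) let $m$ be attained and $N:=\phi^{-1}(\{m\})$. Then $N$ is compact (a level set), equals the sublevel set $\phi^{-1}(-\infty,m]$, hence is closed and totally convex --- in particular closed and locally convex --- and connected (a minimizing geodesic between two of its points has $\phi$ convex and $\ge m$, so $\phi\equiv m$ on it). A geodesic through a point of $N$ has $\phi\circ\g$ convex with interval-shaped minimum set, so the directions in which $N$ locally contains a segment span a subspace; combined with Proposition~5.4 (a closed locally convex set is the closure of a totally geodesic submanifold) this presents $N$ as a boundaryless compact totally geodesic submanifold of dimension $k\le n-1$ (the bound forced by local non-constancy of $\phi$). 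For small $\e>0$ the compact set $\phi^{-1}(-\infty,m+\e]$ lies inside the neighborhood $U(N)$ of Proposition~4.1 (the foot-map retraction onto a closed locally convex set), each of its points $x$ has the unique foot $f(x)\in N$, the minimizing geodesic from $x$ to $f(x)$ is $N$-normal (a first-variation argument, the variation formulas being available via \S3.2), and $\phi$ is strictly increasing along each $N$-normal geodesic (convex with minimum at the base point). Hence $x\mapsto(f(x),\text{that geodesic})$ identifies $\phi^{-1}[m,m+\e]$ with a closed normal disk bundle over $N$; gluing this to $\phi^{-1}[m+\e,\infty)\cong\phi^{-1}(\{m+\e\})\times[m+\e,\infty)$ from (1) along $\phi^{-1}(\{m+\e\})$ exhibits $M$ as the total space of the normal bundle of $N$, which is (2).

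For (3) suppose some level $\phi^{-1}(\{a\})$ is disconnected. If $m:=\inf_M\phi$ were unattained, the construction in (1) would give $M\cong\phi^{-1}(\{a\})\times(m,\infty)$, disconnected, contradicting connectedness of $M$; so $m$ is attained, and we are in the setting of (2) --- in particular $N:=\phi^{-1}(\{m\})$ is connected, whence $a>m$. If $k\le n-2$ the normal sphere bundle of $N$ has fibre $\bS^{n-k-1}$ with $n-k-1\ge1$ and is therefore connected; but that sphere bundle is $\phi^{-1}(\{m+\e\})$ for small $\e$, homeomorphic to the disconnected $\phi^{-1}(\{a\})$ by (1) --- a contradiction. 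Hence $k=n-1$: $N$ is a compact totally geodesic hypersurface, its normal bundle is a line bundle, and the associated sphere bundle is a double cover of the connected base $N$, connected precisely when that line bundle is nontrivial; since $\phi^{-1}(\{m+\e\})$ is disconnected, the normal bundle of $N$ is trivial, and (2) then gives $M\cong N\times\R$. The remaining assertions of (3) --- compactness, total geodesy, triviality of the normal bundle --- have all been obtained along the way.
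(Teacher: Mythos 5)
The paper itself gives no proof of Theorem \ref{Th:compactlevels}: it is quoted from \cite{SS}, with only the remark that it is obtained from Lemma \ref{Lem:levelsets}. Your part (1) follows exactly that indicated route and is fine: every level above $\inf_M\phi$ is compact, Lemma \ref{Lem:levelsets} gives a level-preserving cylinder structure on each slab, and chaining these correspondences and gluing along common levels produces $H$ with properties (a) and (b); your observation that (1) tacitly requires the infimum to be unattained (otherwise (a) forces $H$ to miss $\phi^{-1}(\{m\})$) is also correct.

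The genuine gap is in (2), and it propagates into (3). You assert that $N=\phi^{-1}(\{m\})$ is a \emph{boundaryless} compact totally geodesic submanifold and that $\phi^{-1}[m,m+\e]$ is its closed normal disk bundle. Proposition 5.4 gives much less: $N$, being closed and totally convex, is only the \emph{closure} of a totally geodesic submanifold $W$, and that closure can be a manifold with boundary. A concrete (Riemannian, hence Finsler) example: on the Euclidean plane take $\phi$ to be the distance to a closed segment $S$; this $\phi$ is convex, locally non-constant, all of its levels are compact, and its minimum set is $S$, a $1$-manifold with boundary. At the endpoints of $S$ your claim that ``the directions in which $N$ locally contains a segment span a subspace'' fails, and the identification of the slab with a normal disk bundle over $N$ breaks down (the total space of the normal bundle of $S$ is a closed strip, not $\R^2$), so the proof of (2) as written is incomplete --- handling a minimum set with boundary or of higher codimension is exactly where \cite{SS} does real work. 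In (3) the conclusions you must establish --- that a disconnected level forces $N$ to be a smooth hypersurface \emph{without boundary} with trivial normal bundle --- are imported from (2) rather than derived: your codimension count via connectedness of the normal sphere bundle is the right idea, but it presupposes the unproved disk-bundle structure, and you never exclude a codimension-one minimum set with boundary, for which nearby levels are connected (as in the segment example), so the disconnectedness hypothesis has to be applied at precisely the point you skip. A secondary, Finsler-specific point: Proposition 4.1 yields only a strong deformation retraction by the foot map, not a tubular-neighborhood or fibration structure, so uniqueness of the normal minimizing geodesic from each point of the slab to its foot and the continuity of the resulting bundle chart still require an argument beyond the appeal to ``a first-variation argument''.
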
  
						\medskip
						\begin{Rem}
							{\rm
								Without the assumption of the existence of a compact level of $\phi:(M,g)\to\R$, all the above statements are still valid in the Riemannian case. However we do not yet know this in the Finsler case.
							}
						\end{Rem}

						\subsection{Properness of exponential maps}\par
						The slope inequality of convex functions along geodesics leads us to the properness of the exponential maps on manifolds with convex functions. Clearly the exponential map $\exp_x:T_xM\to M$ at each point on a complete and simply connected Riemannian manifold $M$ gives a diffeomorphism, and hence it is proper. The proof is sketched as follows:\par
						First of all, let $(M,g)$ be a complete non-compact Riemannian manifold of positive sectional curvature. Then a super Busemann function $F_x:(M,g)\to\R$ at a point $x\in M$ is a strictly convex exhaustion. Under this condition Gromoll and Meyer \cite{GM} proved that the exponential map $\exp_x:T_xM\to M$ is proper. In fact, suppose to the  contrary that there is a compact set $K\subset M$ such that 
						$\exp_x^{-1}(K)$ is non-compact. Then there exists a divergent sequence $\{u_j\}_{j=1,2,\dots}\subset T_xM$ of vectors with $\lim_{j\to\infty} \Vert u_j\Vert=\infty$ such that $\exp_x\,u_j\in K$ for all $j=1,2,\dots$. Thus we find a geodesic $\g:[0,\infty)\to(M,g)$ such that $\phi\circ\g$ is bounded above, and hence it is constant. This is impossible, for $\phi$ is strictly convex. Hence the exponential map $\exp_x:T_xM\to M$ is proper. The properness of exponential map on Finsler manifold has recently been extended as follows:\par\medskip
						\begin{Th}[see \cite{ST}]\label{Th:proper}
							{\rm
								If $(M,F)$ is a geodesically complete non-compact Finsler manifold, and if $\phi:(M,F)\to\R$ is a strictly convex exhaustion function, then the exponential map at each point of $(M,F)$ is proper.
							}
						\end{Th}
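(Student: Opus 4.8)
The plan is to argue by contradiction, imitating the Riemannian scheme of Gromoll--Meyer sketched above, with one essential modification: the assertion ``$\phi\circ\g$ is bounded above, hence constant'' is legitimate only for a convex function on all of $\R$, so instead of producing a ray we must manufacture a \emph{bi-infinite} geodesic along which $\phi$ is bounded. Suppose $\exp_x$ is not proper for some $x\in M$. Then there is a compact $K\subset M$ with $\exp_x^{-1}(K)$ noncompact; being a closed subset of $T_xM$ it is unbounded, so we may pick $u_j\in\exp_x^{-1}(K)$ with $L_j:=F(x,u_j)\to\infty$ and $y_j:=\exp_x u_j\in K$, and, passing to a subsequence, $y_j\to y$. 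Let $\si_j:[0,L_j]\to M$ be the unit-speed forward geodesic $\si_j(t):=\exp_x(tu_j/L_j)$, so $\si_j(0)=x$, $\si_j(L_j)=y_j$. Since $\phi$ is convex, each $\phi\circ\si_j$ is convex on $[0,L_j]$, whence the chord inequality
\[
\phi(\si_j(\tau))\le\bigl(1-\tfrac{\tau}{L_j}\bigr)\phi(x)+\tfrac{\tau}{L_j}\,\phi(y_j),\qquad 0\le\tau\le L_j,
\]
which, since $\phi$ is continuous (Proposition \ref{Prop:Lipschitz}) and $K$ is compact, bounds $\phi\circ\si_j$ on $[0,L_j]$ by a constant $C$ independent of $j$.

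Next I would recenter at the parameter midpoint. Put $p_j:=\si_j(L_j/2)$ and $v_j:=\dot\si_j(L_j/2)\in\Si_{p_j}$. The chord inequality gives $\phi(p_j)\le C$, so $p_j$ lies in $\phi^{-1}(-\infty,C]$, which is \emph{compact} because $\phi$ is an exhaustion; after a further subsequence $p_j\to p$, and since the unit indicatrix bundle over a compact set is compact, $v_j\to v\in\Si_p$. The recentered forward geodesics $\hat\si_j(s):=\si_j(L_j/2+s)$, $s\in[-L_j/2,L_j/2]$, satisfy $\hat\si_j(0)=p_j$, $\dot{\hat\si}_j(0)=v_j$; by forward completeness and continuous dependence of geodesics on initial data, $\hat\si_j\to\beta$ locally uniformly on $[0,\infty)$, where $\beta$ is the forward geodesic with $\beta(0)=p$, $\dot\beta(0)=v$. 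To extend $\beta$ backwards, fix $s_0<0$ and observe that $r\mapsto\hat\si_j(s_0+r)$, $r\in[0,-s_0]$, is again a unit-speed forward geodesic whose initial point $\si_j(L_j/2+s_0)$ lies in a fixed compact sublevel set of $\phi$ (chord inequality again) and whose initial velocity lies in the compact indicatrix bundle there; extracting limits, and noting that the limiting forward geodesic must pass through $(p,v)$ at parameter $-s_0$, one finds that $\beta$ extends to a forward geodesic on $[s_0,\infty)$ with $\hat\si_j\to\beta$ on $[s_0,0]$. Diagonalizing over $s_0=-1,-2,\dots$ yields a bi-infinite unit-speed geodesic $\beta:\R\to M$ with $\hat\si_j\to\beta$ locally uniformly on $\R$.

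Finally I would pass the chord inequality to the limit along $\beta$: for every $s\in\R$,
\[
\phi(\beta(s))=\lim_{j\to\infty}\phi(\hat\si_j(s))\le\lim_{j\to\infty}\Bigl[\bigl(\tfrac12-\tfrac{s}{L_j}\bigr)\phi(x)+\bigl(\tfrac12+\tfrac{s}{L_j}\bigr)\phi(y_j)\Bigr]=\tfrac12\bigl(\phi(x)+\phi(y)\bigr).
\]
Thus $\phi\circ\beta:\R\to\R$ is convex (the restriction of a convex function to a geodesic) and bounded above, hence constant. But $\beta$ is a nonconstant unit-speed geodesic and $\phi$ is \emph{strictly} convex, so $\phi\circ\beta$ is strictly convex on every subinterval of $\R$, a contradiction. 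Therefore $\exp_x$ is proper, for every $x\in M$.

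I expect the delicate step to be the backward extension in the middle paragraph, i.e.\ showing that the limit geodesic $\beta$ is genuinely bi-infinite. If ``geodesically complete'' is read to include backward completeness this is automatic; with only forward completeness one cannot simply take limits of backward geodesics, since the backward maximal interval of existence need not behave lower-semicontinuously under convergence of the initial data — the device of reading each backward segment of $\hat\si_j$ as a forward geodesic (which \emph{does} exist on all of $[0,\infty)$) is precisely what circumvents this. The exhaustion hypothesis enters only to furnish the compactness of sublevel sets needed to extract convergent subsequences of basepoints and velocities, and strict convexity is used only in the final line.
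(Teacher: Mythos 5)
Your argument is correct, and it is worth recording how it differs from what the paper actually offers for this theorem: the paper only reproduces the Gromoll--Meyer sketch (take $u_j$ with $\exp_x u_j\in K$, $\Vert u_j\Vert\to\infty$, pass to a limit geodesic $\gamma:[0,\infty)\to M$ with $\phi\circ\gamma$ bounded above, ``hence constant'', contradicting strict convexity) and refers the Finsler case to \cite{ST}. As you point out, that sketch is not literally valid: a convex function on a half-line that is bounded above is merely non-increasing, and under the paper's chordwise definition of strict convexity nothing prevents a bounded, non-increasing, strictly convex composition (think of $e^{-t}$), so some extra input is needed. Your re-centering at the parameter midpoints supplies exactly that input: the exhaustion hypothesis makes the sublevel set $\phi^{-1}(-\infty,C]$ compact, so the midpoint data $(p_j,v_j)$ subconverge, and the limit is a geodesic defined on all of $\R$ contained in a compact sublevel set; on the whole line, convex and bounded above does force constancy, which then contradicts the strict chord inequality. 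The other genuinely Finslerian point you handle correctly is the asymmetry of completeness: backward existence of the limit geodesic is not automatic, but reading each backward segment of $\hat\si_j$ as a forward geodesic issuing from a point of the compact sublevel set, extracting limits, and invoking two-sided \emph{uniqueness} of solutions of the geodesic equation through $(p,v)$ gives the bi-infinite extension without assuming backward completeness. The net effect is a self-contained proof, under the paper's weak notion of strict convexity and forward completeness only, of a statement the paper delegates to \cite{ST}; the price is the extra bookkeeping of the midpoint/diagonal subsequence argument, whereas the paper's sketch, where it applies (e.g.\ the Riemannian positive-curvature setting with its stronger convexity of Busemann functions), is shorter.
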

						Notice that the exhaustion property in Theorem \ref{Th:proper} is needed for the conclusion to hold. For instance, let $\mathcal F\subset \mathbb R^3$ be a surface of revolution with profile curve $y=e^x$, $x\in\mathbb R$.
						Then the exponential map is not proper at any point of $\mathcal F$ .
						\par\medskip
						
						\subsection{Number of ends}\par
						The number of ends of complete Riemannian manifolds admitting locally non-constant convex functions is estimated by using the slope inequality (\ref{eq:slope}), Lemma \ref{Lem:levelsets} and Theorem \ref{Th:compactlevels}. 
						\par\medskip
						
						\begin{Th}[Ends of $(M,g)$, \cite{GS1}]\label{Th:ends1} 
							{\rm
								Let $(M,g)$ be a connected geodesically complete Riemannian manifold admitting a locally nonconstant convex function $\phi$. 
								\begin{enumerate}
									\item If $\phi$ has a noncompact level, then $M$ has one end. 
									\item If $\phi$ assumes its minimum and if it admits a compact level, then $M$ has one end.
									\item If $\phi$ has a disconnected compact level, then $M$ has  two ends.
									\item If $\phi$ has a compact level and if its infimum is not attained,
									then $M$ has two ends.
								\end{enumerate}
							}
						\end{Th}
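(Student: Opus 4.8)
The plan is to split the four statements according to whether $\phi$ admits a non-compact level [statement~(1)] or a compact level [statements~(2)--(4)]; note first that $M$ is automatically non-compact, since a convex function attaining an interior maximum along a geodesic is constant there, so a convex function on a compact manifold would be constant and could not be locally non-constant. In the compact-level case the idea is to extract an explicit homeomorphism model for $M$ from Lemma~\ref{Lem:levelsets} and Theorem~\ref{Th:compactlevels} and then read off the number of ends; in the non-compact-level case no such global model is available and a more careful argument is needed.

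For statements~(2)--(4), the propagation fact (in the Riemannian setting, one compact level forces all levels to be compact) makes Theorem~\ref{Th:compactlevels} applicable. In~(2), with $m:=\inf_M\phi$ attained, the minimum set $N:=\phi^{-1}(\{m\})$ is totally convex (along a geodesic joining two of its points $\phi\le m$, hence $\phi\equiv m$), hence connected, and it has empty interior because $\phi$ is locally non-constant, so $\operatorname{codim}N\ge 1$; by Theorem~\ref{Th:compactlevels}(2), $M$ is homeomorphic to the total space of the normal bundle of the compact set $N$. If $\operatorname{codim}N\ge 2$, the unit sphere bundle has connected fibre over a connected compact base, hence is connected and compact, so $M$ has one end; if $\operatorname{codim}N=1$, the normal line bundle must be non-trivial---otherwise the nearby levels $\phi^{-1}(\{m+\varepsilon\})$ would split into two parallel copies of $N$, i.e.\ $\phi$ would have a disconnected compact level, which is case~(3)---and a non-trivial line bundle over connected $N$ has connected double-cover sphere bundle, so again $M$ has one end. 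In~(3), a disconnected compact level $\phi^{-1}(\{c\})$ forces $m$ to be attained: if not, iterating Lemma~\ref{Lem:levelsets} downward and passing to the union would identify the totally convex---hence connected---sublevel set $\phi^{-1}(-\infty,c]$ with $\phi^{-1}(\{c\})\times(\inf_M\phi,c]$, which is disconnected, a contradiction; then Theorem~\ref{Th:compactlevels}(3) gives $M\cong\phi^{-1}(\{m\})\times\R$, a compact connected hypersurface times $\R$, which has two ends. In~(4), the given compact level must be connected, since a disconnected one would by the previous step force $m$ to be attained, contrary to hypothesis; Lemma~\ref{Lem:levelsets} applied above $c$, together with its downward iteration below $c$, then identifies $M$ with $\phi^{-1}(\{c\})\times\R$, again a compact connected hypersurface times $\R$, with two ends.

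For statement~(1), I would run the propagation fact in reverse: a single non-compact level forces every level to be non-compact. Then for $a>\inf_M\phi$ the sublevel set $S_a:=\phi^{-1}(-\infty,a]$ is totally convex---so connected---and non-compact, because $\partial S_a=\phi^{-1}(\{a\})$ is non-compact. The convexity flow of $-\phi$ underlying Theorem~\ref{Th:compactlevels} retracts $M$ onto $S_a$ and realizes $\phi^{-1}[a,\infty)$ as a collar $\phi^{-1}(\{a\})\times[0,\infty)$ attached to $S_a$ along $\partial S_a$. To conclude that $M$ has exactly one end it suffices to show that for every compact $K$ the set $M\setminus K$ has exactly one unbounded component: choosing $a$ with $K\subset\phi^{-1}(-\infty,a)$, every point of $M\setminus K$ lies in the collar or in the non-compact connected set $S_a$, and the non-compactness of $S_a$ is what allows one to join any such point to the collar by a path avoiding $K$, so that the several collar components (one per component of $\phi^{-1}(\{a\})$) and the interior of $S_a$ all lie in a single end.

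The main obstacle is exactly this last step of statement~(1). Unlike~(2)--(4) there is no product or bundle model: the examples $(\R^n,\,|x_1|)$ show that in this case both the levels and the super-level sets can be disconnected while $M$ is one-ended, so Theorem~\ref{Th:compactlevels} cannot be quoted. The delicate point is to prove that the non-compactness of the sublevel sets $\phi^{-1}(-\infty,a]$ forces all of their ends, together with all the collar ends, to merge; this is where the real work of \cite{GS1} lies, combining the slope inequality~\eqref{eq:slope} with a structural analysis of the totally convex sets $\phi^{-1}(-\infty,a]$ and of the behaviour of the convexity flow near their boundaries.
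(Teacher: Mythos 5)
The paper itself does not prove this theorem: it is quoted from \cite{GS1}, with only the remark that the count of ends is obtained from the slope inequality (\ref{eq:slope}), Lemma \ref{Lem:levelsets} and Theorem \ref{Th:compactlevels}. Your treatment of statements (2)--(4) follows exactly that indicated route and is essentially sound: propagation of compactness of levels, then Theorem \ref{Th:compactlevels} (1)--(3) give the product or bundle models from which the number of ends is read off. Your implicit reading of (2)--(4) as mutually exclusive cases is also the right one, and in fact necessary: the flat cylinder $S^1\times\R$ with $\phi(x,t)=|t|$ satisfies the literal hypotheses of (2) (minimum attained, compact levels) yet has two ends, because it has disconnected levels and so belongs to case (3); so (2) must be read as ``compact connected levels with minimum attained'', which is how you use it. The fine structure you describe in (2) (codimension of the minimum set, triviality of the normal line bundle) is rougher than you acknowledge --- the minimum set is only the closure of a totally geodesic submanifold and may have boundary, in which case the ``two parallel copies'' dichotomy does not apply --- but in that situation the nearby levels are connected and one-endedness still follows from the bundle model, so this is a presentational rather than a logical defect.

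The genuine gap is in statement (1), and it is twofold. First, the collar structure you invoke for $\phi^{-1}[a,\infty)$ is borrowed from Lemma \ref{Lem:levelsets} and Theorem \ref{Th:compactlevels}, both of which assume a compact level; when every level is noncompact neither applies (the convexity radius over a noncompact level set can tend to zero, and feet on sublevel sets need not exist or be unique uniformly), so even the assertion that $\phi^{-1}[a,\infty)$ is a collar over $\phi^{-1}(\{a\})$ is unproved in this case. Second, the decisive step --- that every point of an unbounded component of $M\setminus K$ can be joined to the sublevel set $S_a=\phi^{-1}(-\infty,a]$ by a path avoiding $K$ --- is precisely what has to be shown, and ``the non-compactness of $S_a$ is what allows one to join'' is not an argument: along a minimizing geodesic from such a point to a far-away point of $S_a$, convexity only bounds $\phi$ above by its values at the endpoints, and this does not keep the geodesic away from $K$, which lies above level $a$ but possibly below the level of the chosen point. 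You state candidly that this is where the real work of \cite{GS1} lies, and indeed it is not reproduced here; so your proposal is an honest partial reconstruction, complete in cases (2)--(4) from the tools the paper quotes, but with case (1) left open.
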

						
						However the ends of geodesically complete Finsler manifolds admitting locally non-constant convex functions have not been fully understood yet. We do not know any example of a convex function $\phi:(M,F)\to\R$ with compact and non-compact levels simultaneously. \par\medskip
						\subsection{Isometry groups}\par
						Let $(H,g)$ be a Hadamard manifold, namely $H$ is a complete and simply connected Riemannian manifold of non-positive sectional curvature. Then the distance function $d(x,.)$ from a  fixed point $x\in H$ is convex, with a unique minimum point at $x$. 
						A  well-known classical theorem by Cartan states that if $G$ is a compact subgroup of the isometry group $I(H)$ of $H$, then it has a common fixed point. In fact, if $x\in H$ is a fixed point, then the $G$-orbit $G(x)$ of $x$ is compact, and hence there exists a unique smallest ball $B(y,r)$ with $G(x)\subset \bar B(y,r)$. Clearly $B(y,r)$ is invariant under the action of $G$, and hence the center $y$ is fixed under the actions of $G$. \par
						We finally discuss how the existence of a convex function on $(M,g)$ and $(M,F)$ influences the group of isometries on them. The splitting theorem for Riemannian manifolds admitting affine functions has been discussed in \cite{Innami2}. It is proved in \cite{GS3} that if $(M,g)$ is a complete Riemannian manifold with non-compact isometry group and if $(M,g)$ admits a convex function without minimum whose levels are all compact, then $(M,g)$ is isometric to the Riemannian product $N\times\mathbb R$, where $N$ is a compact smooth manifold. In \cite{CG} Cheeger and Gromoll  constructed the compact totally convex filtration obtained by a super Busemann function on a complete non-compact Riemannian manifold of non-negative sectional
						curvature. They proved:
						\begin{Th}\label{Th:CG}\par
							{\rm
								A complete Riemannian manifold $(M,g)$ of non-negative sectional curvature splits off isometrically as the product:
								\[      M= \overline{M}\times \mathbb R^k,       \]
								where the isometry group $I(\overline{M})$ of $\overline{M}$ is compact and $I(M)=I(\overline{M})\times I(\mathbb R^k)$.
							}
						\end{Th}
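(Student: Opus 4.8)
The plan is to reduce the statement to three steps: (A) produce the isometric splitting $M=\overline{M}\times\R^{k}$ in which $\overline{M}$ contains no line; (B) prove that $I(\overline{M})$ is compact whenever $\overline{M}$ is complete, of non-negative sectional curvature, and line-free; and (C) show that every isometry of a product $\overline{M}\times\R^{k}$ with $\overline{M}$ line-free preserves both factors, so that $I(M)\cong I(\overline{M})\times I(\R^{k})$ as a direct product. The case $k=0$ of (C) is immediate ($I(M)=I(\overline{M})$), so only (A) and (B) are needed there.

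For (A) I would iterate the Cheeger--Gromoll line-splitting argument. If $M$ contains a line $\g\colon\R\to M$, the Busemann functions $F_{\g^+}$ and $F_{\g^-}$ of the two rays of $\g$ are convex in the non-negatively curved case (the result recalled in \S 5.2, following \cite{CG}); their sum is a non-positive convex function vanishing along $\g$, hence identically $0$, so $F_{\g^+}=-F_{\g^-}$ is at once convex and concave, i.e.\ affine. Its gradient is then a parallel unit vector field, whose flow exhibits $M$ isometrically as a Riemannian product with an $\R$-factor; the complementary factor is again complete with $\mathrm{sec}\ge 0$ and of dimension one less, so after finitely many steps one reaches $M=\overline{M}\times\R^{k}$ with $\overline{M}$ line-free. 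For (C) I would use that in a Riemannian product a geodesic is a pair of geodesics with constant speed ratio, so a line of $\overline{M}\times\R^{k}$ projects to a line or a point in each factor; since $\overline{M}$ is line-free, every line of $M$ lies in a single slice $\{x\}\times\R^{k}$, and the union of all lines through a point $(x,v)$ is precisely that slice. An isometry $\psi$ of $M$, permuting lines, therefore permutes these slices; it induces an isometry $\alpha$ of $\overline{M}$ and restricts to an isometry of each slice, and since $\psi$ preserves the speeds of geodesics and sends a geodesic $c\times\{v\}$ (with $c$ a geodesic of $\overline{M}$) to a geodesic of the product, the slice isometry cannot depend on the base point. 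Hence $\psi=\alpha\times\beta$ with $\alpha\in I(\overline{M})$, $\beta\in I(\R^{k})$, and $\psi\mapsto(\alpha,\beta)$ is the desired isomorphism.

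Part (B) is the core. Suppose $G:=I(\overline{M})$ is non-compact. Since $G$ acts properly with compact isotropy groups, some orbit $G\cdot p$ is unbounded; refining and composing isometries one may pass to a cyclic (or one-parameter) subgroup with unbounded orbit, say generated by $\phi$ with $\{\phi^{n}p\}_{n}$ unbounded, and subadditivity of $n\mapsto d(p,\phi^{n}p)$ yields the translation length $\tau=\lim_{n\to\infty}d(p,\phi^{n}p)/n$. Granting $\tau>0$, so that $d(p,\phi^{n}p)=\tau n+o(n)$, let $\g$ be a limit ray of minimizing geodesics from $p$ to $\phi^{n}p$ and $\si$ a limit ray of minimizing geodesics from $p$ to $\phi^{-n}p$ (both exist, the lengths diverging). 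The broken curve from $\phi^{-n}p$ through $p$ to $\phi^{n}p$ has length $d(p,\phi^{-n}p)+d(p,\phi^{n}p)=2\tau n+o(n)$, whereas $d(\phi^{-n}p,\phi^{n}p)=d(p,\phi^{2n}p)=2\tau n+o(n)$, so the curve exceeds the distance between its endpoints by only $o(n)$ while each half has length $\sim\tau n$; Toponogov's comparison (here non-negative curvature enters decisively) then forces the angle at $p$ to tend to $\pi$, so $\g$ and $\si$ join up into a globally minimizing geodesic $\R\to\overline{M}$ — a line in $\overline{M}$, contradicting (A). Hence $G$ is compact, and combining (B), applied to the factor $\overline{M}$ of (A), with (C) gives the theorem.

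I expect the main obstacle to sit entirely inside (B), in two places. First, one must exclude the possibility that $\{\phi^{n}p\}$ escapes to infinity only sublinearly, so that $\tau=0$ (as genuinely happens for parabolic isometries in negative curvature); I would handle this by passing to the closure of $\langle\phi\rangle$ in $G$ and using that a non-compact closed abelian subgroup must act with linearly growing displacement along some Euclidean factor — again a use of non-negative curvature. Second, the passage from a defect of size $o(n)$ to the conclusion that the angle at $p$ tends to $\pi$, and thence to the limit curve being an honest line, must be justified by comparison geometry. Both points are classical and are carried out within the Cheeger--Gromoll structure theory and the soul construction; see \cite{CG}, \cite{CE}, \cite{Besse}.
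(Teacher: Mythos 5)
The paper itself offers no proof of this theorem: it is quoted as a result of Cheeger--Gromoll and referred to \cite{CG}, so your proposal can only be measured against the classical argument. Your parts (A) and (C) are sound in outline: the line-splitting via the two Busemann functions of a line (their sum is convex, $\le 0$, and $=0$ on the line, hence vanishes identically, making $F_{\g^+}$ affine with parallel unit gradient), and the observation that when $\overline M$ contains no line every line of $\overline M\times\R^k$ lies in a slice $\{x\}\times\R^k$, so an isometry permutes slices; the cleanest way to finish (C) is to note that preserving the slice foliation means preserving the vertical parallel distribution, hence also its orthogonal complement and the horizontal leaves $\overline M\times\{v\}$, which gives $\psi=\alpha\times\beta$ directly rather than through the assertion that ``the slice isometry cannot depend on the base point.''

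The genuine gap is in (B), exactly where you flagged it, and it is not repaired. First, the reduction from non-compactness of $I(\overline M)$ to a single isometry $\phi$ with unbounded cyclic orbit and positive translation length $\tau$ is unjustified: a priori the group could be non-compact while every single element generates a relatively compact (for instance finite-order) subgroup, as happens for infinite discrete torsion groups acting properly, and your remedy for $\tau=0$ (``a non-compact closed abelian subgroup must act with linearly growing displacement along some Euclidean factor'') is circular -- no Euclidean factor has been produced, and the absence of lines in $\overline M$ is precisely what is at stake. Second, even granting $\tau>0$, the Toponogov hinge comparison does force the angle at $p$ to tend to $\pi$, so the limit rays $\g$ and $\si$ fit together into one complete geodesic; but a geodesic consisting of two rays meeting at angle $\pi$ need not be a line (the two halves of a meridian of a paraboloid are rays meeting at angle $\pi$, yet the paraboloid contains no line). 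The only lower bound your construction yields is $d(\g(s),\si(t))\ge s+t-e_n$ with excess $e_n=o(n)$, which is vacuous since $e_n$ need not stay bounded; minimality of the concatenation requires a genuinely different argument. You acknowledge both difficulties but discharge them by appealing to ``the Cheeger--Gromoll structure theory and the soul construction, see \cite{CG}'' -- that is, to the source of the very statement being proved -- so as written the proposal, like the paper, ultimately rests on \cite{CG} rather than establishing the compactness of $I(\overline M)$.
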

						Without assuming that the sectional curvature is non-negative, there are some results on the relation between the isometry groups and convex functions defined on $(M,g)$ and on $(M,F)$ respectively.\par
						\begin{Th}[see \cite{Yamaguchi}]\label{Th:isometries1}\par
							{\rm
								Let $(M,g)$ be a complete Riemannian manifold admitting a strictly convex function $\psi:(M,g)\to \mathbb R$. We then have
								\begin{enumerate}
									\item If $\psi$ admits a minimum, then every compact subgroup $G$ of the isometry group of $(M,g)$ has a common fixed point.
									\item  If $\psi$ has a compact level and if it has no minimum, then the group of isometries of $(M,g)$ is compact.
								\end{enumerate}
							}
						\end{Th}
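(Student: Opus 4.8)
The plan is to treat the two assertions by different means. For part~(1), note first that a strictly convex function can have at most one minimum point, because equality in (\ref{eq:convex}) is ruled out; hence the hypothesis supplies a unique minimum point $p$. The engine of the proof is then the claim that \emph{a strictly convex function attaining its minimum is automatically an exhaustion}. To see this, observe that along any unit-speed geodesic ray $\g$ issuing from $p$ the function $\psi\circ\g$ is convex on $[0,\infty)$, is nondecreasing since $p$ realizes the global minimum of $\psi$, and is strictly convex, so it cannot remain bounded and therefore $\psi\circ\g(t)\to\infty$. Consequently, were some sublevel set $\psi^{-1}(-\infty,a]$ unbounded, I would choose points $x_j$ in it with $d(p,x_j)\to\infty$, take unit-speed minimizing geodesics from $p$ to each $x_j$, and pass to a subsequential limit of their initial directions --- exactly the limiting-geodesic device used in the proof of Lemma~\ref{Lem:injectivityradius} --- to produce a ray $\g$ from $p$ along which $\psi\le a$ throughout, contradicting $\psi\circ\g\to\infty$. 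Thus every sublevel set is bounded, hence compact by Hopf--Rinow.

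Next I would average over $G$: set $\bar\psi(x):=\int_G\psi(g\cdot x)\,d\mu(g)$, with $\mu$ the normalized Haar measure on the compact group $G$. By construction $\bar\psi$ is $G$-invariant, and it is locally Lipschitz by Proposition~\ref{Prop:Lipschitz}. Along any geodesic $\g$ each $t\mapsto\psi(g\,\g(t))$ is strictly convex because $g\g$ is again a geodesic; integrating this family of strictly convex functions over the probability space $G$ preserves the strict inequality, so $\bar\psi$ is strictly convex. Finally, since $\psi$ is an exhaustion we may write $\psi(y)\ge\rho(d(p,y))$ with $\rho(s)\to\infty$, and then, with $R:=\max_{g\in G}d(p,g\,p)<\infty$, we get $\bar\psi(x)\ge\rho(d(p,x)-R)\to\infty$, so $\bar\psi$ is an exhaustion too. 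A continuous exhaustion attains its infimum; strict convexity makes the minimizer $q$ unique; and $G$-invariance then forces $g\,q=q$ for every $g\in G$. So $q$ is the desired common fixed point, which proves~(1).

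For part~(2) I would argue by contradiction, assuming that $I(M,g)$ is non-compact. Because $\psi$ has a compact level, the level-set configuration result established earlier forces \emph{every} level of $\psi$ to be compact, and since $\psi$ has no minimum it is a convex function without minimum all of whose levels are compact; hence the theorem of \cite{GS3} quoted just above applies, and $(M,g)$ is isometric to a Riemannian product $N\times\R$ with $N$ compact, of dimension $n-1\ge1$ in view of the standing convention $n\ge2$. Pick $t_0$ and a non-constant complete geodesic $\g:\R\to N\times\{t_0\}$, which exists since $N\times\{t_0\}$ is a compact totally geodesic submanifold of positive dimension; then the image of $\g$ is contained in a compact set, so $\psi\circ\g$ is a bounded convex function on $\R$ and must be constant --- contradicting the strict convexity of $\psi$. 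Therefore $I(M,g)$ is compact.

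I expect the genuine difficulty to lie entirely in part~(1), in two places: verifying that the averaging over $G$ preserves \emph{strict} convexity and the exhaustion property, and, above all, the lemma that a strictly convex function attaining its minimum is an exhaustion, where completeness is needed to extend minimizing geodesics to rays and the convexity inequality must be passed to the limiting ray. In part~(2) the substantive input is imported wholesale from \cite{GS3}; once the splitting $M\cong N\times\R$ is available the conflict with strict convexity is immediate, and the only caveat to bear in mind is that the hypothesis $n\ge2$ is essential --- for $n=1$ the assertion fails, as $\psi(t)=e^t$ on $\R$ shows.
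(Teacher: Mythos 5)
Your proof of part (1) is essentially the paper's own: average $\psi$ against the normalized Haar measure of the compact group $G$ to obtain a $G$-invariant strictly convex function, and take its unique minimum point as the common fixed point. The only difference is that you justify why the averaged function attains its minimum, via your (correct) lemma that a strictly convex function attaining a minimum has compact sublevel sets, plus the estimate $\bar\psi(x)\ge\rho(d(p,x)-R)$; the paper merely asserts attainment ``since $G$ is compact,'' so this is a filling-in of a terse step rather than a deviation. Part (2) is where you genuinely diverge. The paper argues directly: noncompactness of the isometry group gives unbounded orbits, the level-set results give $M\cong\psi^{-1}(\{a\})\times\R$, the (only sketched) strict monotonicity of the diameter function $t\mapsto{\rm diam}(\psi^{-1}(\{t\}))$ forces every isometry to fix the two ends, and a proper curve $\alpha$ with $\psi\circ\alpha$ increasing then contradicts the existence of an isometry pushing $\psi^{-1}(\{a\})$ far up the levels. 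You instead invoke the splitting theorem of \cite{GS3} quoted just before the statement (noncompact isometry group plus a convex function without minimum and with all levels compact forces $M$ isometric to $N\times\R$, $N$ compact) and kill it with a one-line contradiction: a complete nonconstant geodesic in a compact slice $N\times\{t_0\}$ makes $\psi$ bounded and convex on $\R$, hence constant, which strict convexity forbids (and, as you note, $n\ge2$ — the paper's standing hypothesis — is exactly what guarantees such a geodesic; your $e^t$ example on $\R$ shows the dimension restriction is essential). Your route is shorter and avoids the delicate diameter-monotonicity claim, but it imports the full strength of the \cite{GS3} structure theorem as a black box; since \cite{GS3} postdates and is intertwined with \cite{Yamaguchi}, one should verify that the splitting theorem is not itself derived from the statement being proved, whereas the paper's argument, though sketchy, stays self-contained at that point.
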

						\begin{proof} {\rm For the proof of (1), we denote by $\mu$ the Haar measure on $G$, normalized by $\int_G d\mu=1 $. We define a function $\Psi:(M,g)\to \mathbb R$ by $\Psi(x):=\int_G\psi(gx) d\mu(g)$, $x\in M$. Clearly, $\Psi$ is strictly convex. Since $G$ is compact, $\Psi$ attains its minimum. The strict convexity of $\Psi$ means that the minimum set of $\Psi$ consists of a single point. It follows from the construction of $\Psi$ that the minimum set is a common fixed point of $G$.\par
								The strictly increasing property of the diameter function defined in (\ref{eq:diam}) plays an important role for the proof of (2). This fact can intuitively be understood as follows: \par
								Choose numbers $\inf_M\Psi<a<b$ such that $d(x,\psi^{-1}(\{a\}))$, for every $x\in \psi^{-1}(\{b\})$,  is less than the convexity radius on the compact set $\psi^ {-1}[a,b]$. If $x,y\in \psi^{-1}(\{b\})$ are sufficiently close to each other and if $f(x)$ and $f(y)$ are feet on $\psi^{-1}(\inf_M\Psi,a]$, we then have $d(x,y)>d(f(x),f(y))$, (see Figure\ 6).
								\begin{center}	\begin{tikzpicture} [scale=1.5]
									\draw[thick] (4,1) parabola bend (0,2) (-4,0) node[left] {$\psi^{-1}(\{a\})$} ;
									
									\draw[thick] [<-] (-2,1.5)to (-3,4) node[left]{$x$};
									\draw [thick][<-] (2,1.75)to (3,4)node[right]{$y$};
									\draw [thick](-2,1.5) to (2,1.75)node[below]{$f(y)$};
									\node at(-2,1.2) {$f(x)$};
									\node[above=0.3cm] {$\psi^{-1}(\inf_M \psi,a]$};
									\node[below=0.8cm]  {	Figure \ 6};
									\end{tikzpicture}
								\end{center}
								Roughly speaking, this is because of the angle property: $\measuredangle	(x,f(x),f(y))>\frac{\pi}{2}$ and 	  $	\measuredangle (y,f(y),f(x))>\frac{\pi}{2}$. Here  $\measuredangle(x,f(x),f(y))$ is the angle between two vectors at $f(x)$ tangent to minimizing geodesics joining $f(x)$ to $x$ and $f(x)$ to $f(y)$. This infinitesimal version of the above observation will give the Sharafutdinov construction of the distance non-increasing strong deformation retract.\par 
								For the proof of (2), we argue by deriving a contradiction. Suppose that the isometry group $\textbf{G}$ of $M$ is non-compact. Then the orbit $\textbf{G}(x)$ of an arbitrary  point $x\in (M,g)$ forms an unbounded set. We know from Theorems 5.1 and 5.3 that $M$ is homeomorphic to $\psi ^{-1}(\{a\})\times \mathbb R$, where $a:=\psi(x)$. Since the diameter function ${\rm diam}_{\psi}(t)$ of $\psi$ is strictly increasing, every isometry $g_1$ of $(M,g)$ fixes each end of $M$. We may chose an element $g_1\in \textbf{G}$ so as to satisfy: $g_1\circ\psi^{-1}(\{a\})$ is contained in $\psi[b,c]$, where $b-a$ is sufficiently large. Thus the diameter function of $\psi$ satisfies ${\rm diam}_{\psi}(b)>{\rm diam}_{\psi}(a)$. We then choose a proper curve $\alpha:(\inf_M\psi, \infty)\to (M,g)$ such that $\psi \circ \alpha$ is strictly increasing and $\alpha[b,c]$ does not meet $g_1\circ\psi^{-1}(\{a\})$ (see Figure\ 7).  It obviously follows that $g_1\circ\alpha:(\inf_M\psi, \infty)\to (M,g) $ does not pass through any point of $\psi^{-1}(\{a\})$ and join the two ends of M, a contradiction. } \end{proof}

								\begin{tikzpicture} [scale=0.7]
								\draw (.5,-10.5) arc (165:120:20cm);  \draw (-.5,-10.5) arc (15:60:20cm);
								\draw[dashed](-.8,-9.5) to[bend left] (.8,-9.5)node[right]{$\psi^{-1}(\{a\})$};
								\draw (-.8,-9.5) to[bend right] (.8,-9.5);
								\draw[dashed](-2.6,-5.5) to[bend left] (2.6,-5.5)node[right]{$\psi^{-1}(\{b\})$};
								\draw (-2.6,-5.5) to[bend right] (2.6,-5.5);
								\draw[dashed](-6.7,-.6) to[bend left] (6.7,-.6)node[right]{$\psi^{-1}(\{c\})$};
								\draw (-6.7,-.6) to[bend right] (6.7,-.6);
								\path (-4,-3.4) edge[ out=40, in=-40, looseness=0.8, loop, distance=2cm, ->]	node[above=3pt] {} (-4,-3.4);
								\draw (0,-9) arc (184:162:32cm) [decoration={markings, mark=at position 0.825 with {\arrow{>}}},postaction={decorate}]node[above]{$\alpha$};
								\node at (.8,-5.8){$\alpha(b)$};\node at (1,-1.8){$\alpha(c)$};
								\node at (-4.7,-3.5){$g_1(x)$};\node at (-2.3,-2.8){$g_1\circ\psi^{-1}(\{a\})$}; \node at (-1,-9.5) {$x$};
								\node at (2,-11){Figure \ 7};				
								\end{tikzpicture}
								
								We know very little about the isometry groups of complete Finsler manifolds admitting convex functions. Proof of the following result can  be found  in \cite{ST}.\par
								\begin{Th}[see \cite{ST}]\par
									{\rm
										Let $\psi:(M,F)\to\R$ be a strictly convex exhaustion function. Then every compact subgroup of the group of isometries on $(M,F)$ has a common fixed point.
									}
								\end{Th}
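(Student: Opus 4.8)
The plan is to follow the proof of the Riemannian statement, Theorem~\ref{Th:isometries1}(1), replacing Cartan's classical smallest-enclosing-ball argument --- which is awkward on a Finsler manifold because $d$ is asymmetric --- by a Haar-averaging argument in which strict convexity alone pins down a single fixed point. Let $G$ be a compact subgroup of the group of isometries of $(M,F)$, and let $\mu$ be the normalized Haar measure on $G$. Define $\Psi:(M,F)\to\R$ by $\Psi(x):=\int_G\psi(gx)\,d\mu(g)$. This is finite and continuous: for $x$ fixed the orbit $Gx$ is compact, $\psi$ is continuous (indeed locally Lipschitz, Proposition~\ref{Prop:Lipschitz}), hence bounded on a neighborhood of $Gx$, so dominated convergence applies; and $\Psi\circ g=\Psi$ for every $g\in G$ by invariance of $\mu$.

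First I would verify that $\Psi$ is again a strictly convex exhaustion. An isometry of $(M,F)$ preserves $F$, hence curve-length, hence the distance $d$, and therefore maps each forward geodesic to a forward geodesic of the same speed; consequently, for any geodesic $\gamma$ and any $g\in G$ the function $t\mapsto\psi(g\gamma(t))$ is strictly convex. Since the defining inequality for strict convexity holds pointwise in $g$ and $\mu$ is a probability measure, integration yields strict convexity of $\Psi\circ\gamma$. For the exhaustion property: given $a$ and a sequence $x_j$ with $\Psi(x_j)\le a$, for each $j$ there is a $g_j\in G$ with $\psi(g_jx_j)\le a$ (a point not above the average), so $g_jx_j$ lies in the compact set $\psi^{-1}(-\infty,a]$; passing to a subsequence along which $g_jx_j$ converges and, using compactness of $G$ together with continuity of the action, $g_j^{-1}$ converges, we get that $x_j=g_j^{-1}(g_jx_j)$ converges. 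Hence $\Psi^{-1}(-\infty,a]$ is compact.

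It follows that $\Psi$ is a continuous exhaustion, so it attains its infimum $c:=\inf_M\Psi$. Using forward geodesic completeness (Hopf-Rinow, as elsewhere in this section), any two points of $M$ are joined by a forward minimizing geodesic; if $\Psi$ attained its minimum at two distinct points, strict convexity of $\Psi$ along such a geodesic would force a value strictly below $c$ at an interior point, a contradiction. Hence the minimum set of $\Psi$ is a single point $p_0$, and since $\Psi$ is $G$-invariant, $G$ permutes the one-point set $\{p_0\}$; thus $gp_0=p_0$ for all $g\in G$, which proves the theorem.

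The only step where the Finsler structure genuinely intervenes --- and the main thing to get right --- is the first: confirming that isometries of $(M,F)$ respect the (strict) convexity structure along geodesics and that Haar averaging preserves both strict convexity and the exhaustion property. Once this robustness of the hypotheses under $G$-averaging is in hand, the endgame is purely order-theoretic, namely the uniqueness of the minimizer of a strictly convex exhaustion, and never refers to the asymmetry of $d$; this is exactly why the argument transplants to the Finsler setting, unlike Cartan's original circumcenter proof.
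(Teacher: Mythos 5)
Your proof is correct and follows essentially the same route as the paper: it is the Haar-averaging argument that the paper itself gives for the Riemannian case (Theorem 5.5(1), after Yamaguchi) and that is carried out in the cited reference \cite{ST}, with the exhaustion hypothesis guaranteeing that the averaged function $\Psi$ attains its minimum and strict convexity forcing that minimizer to be unique, hence $G$-fixed. Your added verifications (invariance of geodesics under Finsler isometries, strict convexity and the exhaustion property surviving the averaging, and the use of forward completeness for the uniqueness step) are exactly the points that make the transplantation to the Finsler setting work.
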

								\medskip
								
										\printindex

\end{document}